\setlist{itemsep=1pt,topsep=3pt,parsep=1pt,leftmargin=\parindent,itemindent=\parindent}
\setlist[itemize]{itemsep=1pt,topsep=2pt,parsep=1pt,wide}
\renewcommand{\bar}{\overline}
\newcommand{\lint}{\llbracket}
\newcommand{\rint}{\rrbracket}
\numberwithin{equation}{section}
\newtheorem{theorem}{Theorem}[section]
\newtheorem{lemma}[theorem]{Lemma}
\newtheorem{proposition}[theorem]{Proposition}
\newtheorem{rem}[theorem]{Remark}
\newtheorem{claim}[theorem]{Claim}
\newcommand{\dd}{\mathrm{d}}
\newcommand{\ind}{\mathbf{1}}
\renewcommand{\P}{\mathrm{P}}
\newcommand{\E}{\mathrm{E}}
\renewcommand{\tilde}{\widetilde}
\renewcommand{\hat}{\widehat}
\newcommand{\cc}{\complement}
\newcommand{\bt}{\mathbf{t}}
\newcommand{\bbVar}{\mathbb{V}\mathrm{ar}}
\newcommand{\cX}{{\ensuremath{\mathcal X}} }
\newcommand{\cA}{{\ensuremath{\mathcal A}} }
\newcommand{\cF}{{\ensuremath{\mathcal F}} }
\newcommand{\cJ}{{\ensuremath{\mathcal J}} }
\newcommand{\cP}{{\ensuremath{\mathcal P}} }
\newcommand{\cN}{{\ensuremath{\mathcal N}} }
\newcommand{\cL}{{\ensuremath{\mathcal L}} }
\newcommand{\bP}{{\ensuremath{\mathbf P}} }
\newcommand{\bQ}{{\ensuremath{\mathbf Q}} }
\newcommand{\bE}{{\ensuremath{\mathbf E}} }
\newcommand{\cW}{{\ensuremath{\mathcal W}} }
\newcommand{\cU}{{\ensuremath{\mathcal U}} }
\DeclareMathSymbol{\leqslant}{\mathalpha}{AMSa}{"36} 
\DeclareMathSymbol{\geqslant}{\mathalpha}{AMSa}{"3E} 
\DeclareMathSymbol{\eset}{\mathalpha}{AMSb}{"3F}     
\renewcommand{\leq}{\;\leqslant\;}                   
\newcommand{\Var}{\mathrm{Var}}        
\newcommand{\prodtwo}[2]{\prod_{\substack{#1 \\ #2}}}     
\newcommand{\bbE}{{\ensuremath{\mathbb E}} }
\newcommand{\bbP}{{\ensuremath{\mathbb P}} }
\newcommand{\bbR}{{\ensuremath{\mathbb R}} }
\newcommand{\bbZ}{{\ensuremath{\mathbb Z}} }
\newcommand{\gb}{\beta}
\newcommand{\gep}{\varepsilon}       
\newcommand{\gp}{\varphi}
\newcommand{\gl}{\lambda}
\def\captionfont@{\footnotesize}
\def\captionheadfont@{\scshape}
\long\def\@makecaption#1#2{%
  \vspace{2mm}
  \setbox\@tempboxa\vbox{\color@setgroup
    \advance\hsize-6pc\noindent
    \captionfont@\captionheadfont@#1\@xp\@ifnotempty\@xp
        {\@cdr#2\@nil}{.\captionfont@\upshape\enspace#2}%
    \unskip\kern-6pc\par
    \global\setbox\@ne\lastbox\color@endgroup}%
  \ifhbox\@ne 
    \setbox\@ne\hbox{\unhbox\@ne\unskip\unskip\unpenalty\unkern}%
  \fi
  \ifdim\wd\@tempboxa=\z@ 
    \setbox\@ne\hbox to\columnwidth{\hss\kern-6pc\box\@ne\hss}%
  \else 
    \setbox\@ne\vbox{\unvbox\@tempboxa\parskip\z@skip
        \noindent\unhbox\@ne\advance\hsize-6pc\par}%
\fi
  \ifnum\@tempcnta<64 
    \addvspace\abovecaptionskip
    \moveright 3pc\box\@ne
  \else 
    \moveright 3pc\box\@ne
    \nobreak
    \vskip\belowcaptionskip
  \fi
\relax
}
\def\writefig#1 #2 #3 {\rlap{\kern #1 truecm
\raise #2 truecm \hbox{#3}}}
\newcommand{\tf}{\mathtt{F}}
\newcommand{\cons}{\texttt{c}}
\title[Random Walk Pinning Model II: upper bounds and disorder relevance]{The Random Walk Pinning Model  II:\\ 
Upper bounds on the free energy and disorder relevance}
\author{Quentin Berger}
\address{Université Sorbonne Paris Nord, Laboratoire d'Analyse, Géométrie et Applications, CNRS UMR 7539, 99 Av. J-B Clément, 93430 Villetaneuse, France and Institut Universitaire de France}
\email{quentin.berger@math.univ-paris13.fr}
\author{Hubert Lacoin}
\address{IMPA, Estrada Dona Castorina, 110,
Rio de Janeiro, Brazil}
\email{lacoin@impa.br}
\subjclass[2020]{Primary: 82B44; Secondary: 60K35, 82D60.}
\keywords{Random Walk Pinning Model, disorderedsSystems, Harris criterion, size-biasing, disorder relevance}
\date{\today}
\begin{document}

\begin{abstract}
This article investigates the question of disorder relevance for the continuous-time Random Walk Pinning Model (RWPM) and completes the results of the companion paper~\cite{BLirrel}.
The RWPM considers a continuous time random walk $X=(X_t)_{t\geq 0}$, whose law is modified by a Gibbs weight given by $\exp(\beta \int_0^T \mathbf{1}_{\{X_t=Y_t\}} \mathrm{d}t)$, where $Y=(Y_t)_{t\geq 0}$ is a quenched trajectory of a second (independent) random walk and $\beta \geq 0$ is the inverse temperature, tuning the strength of the interaction.
The random walk~$Y$ is referred to as the \textit{disorder}. 
It has the same distribution as~$X$ but a jump rate $\rho \geq 0$, interpreted as the \textit{disorder intensity}.
For fixed $\rho\ge 0$, the RWPM undergoes a localization phase transition as $\beta$ crosses a critical threshold $\beta_c(\rho)$.
The question of disorder relevance then consists in determining whether a disorder of arbitrarily small intensity~$\rho$ changes the properties of the phase transition.
We focus our analysis on the case of transient \textit{$\gamma$-stable walks} on $\mathbb{Z}$, \textit{i.e.}\ random walks in the domain of attraction of a $\gamma$-stable law, with $\gamma\in (0,1)$. 
In the present paper, we show that disorder is relevant when $\gamma \in (0,\frac23]$, namely that $\beta_c(\rho)>\beta_c(0)$ for every $\rho>0$.
We also provide lower bounds on the critical point shift, which are matching the upper bounds obtained in~\cite{BLirrel}.
Interestingly, in the \textit{marginal case $\gamma = \frac23$}, disorder is \textit{always relevant}, independently of the fine properties of the random walk distribution; this contrasts with what happens in the marginal case for the usual disordered pinning model.
When $\gamma \in (\frac23,1)$, our companion paper~\cite{BLirrel} proves that disorder is irrelevant (in particular $\beta_c(\rho)=\beta_c(0)$ for $\rho$ small enough).
We complete here the picture by providing an upper bound on the free energy in the regime $\gamma\in (\frac 2 3,1)$ that highlights the fact that although disorder is irrelevant, it still has a non-trivial effect on the phase transition, at any $\rho>0$.
\end{abstract}

\maketitle


\section{Introduction and main results}

We consider in this article and its companion paper~\cite{BLirrel} the question of disorder relevance for the Random Walk Pinning Model (RWPM), studied in \cite{BT10,BL11,BS10,BS11}.
In this introduction, we only present the specific technical setup studied in this paper, but we refer to \cite{BLirrel} for a broader overview of the RWPM, together with more complete references.

\subsection{The \texorpdfstring{\(\gamma\)}{gamma}-stable continuous-time RWPM}

We let $J: \bbZ \to \bbR_+$ be a \textit{symmetric} function on $\bbZ$ such that $\sum_{x\in \bbZ^d} J(x) =1$. 
Assume furthermore that $J$ is a \textit{non-increasing} function of $|x|$. 
We then let $W=(W_t)_{t\geq 0}$ be a continuous-time random walk on $\mathbb Z$ with transition kernel~$J(\cdot)$, \textit{i.e.}~$W$ is a continuous time Markov chain with generator $\cL$ given by
\begin{equation*}
\mathcal L f(x)= \sum_{y\in\bbZ^d} J(y)\left(f(x+y)-f(y)\right) \,,
\end{equation*}
and we denote by $\P$ the distribution of $W$. 
We further assume that $W$ is  in the domain of attraction of a $\gamma$-stable process, with $\gamma \in (0,1)$, or more precisely that 
\begin{align}\label{JPP}
  J(x)=\varphi(|x|)(1+|x|)^{-(1+\gamma)} \,,\qquad x\in \bbZ \,,
\end{align}
where $\varphi(\cdot)$ is a slowly varying function, \textit{i.e.}\ such that \(\lim_{x\to\infty}\varphi(cx)/\varphi(x) =1\) for any \(c>0\), see \cite{BGT89}.
Let us note that, since \(\gamma \in (0,1)\), the random walk \(W\) is transient.

Given $\rho \in [0,1)$, we consider  $X,Y$  two \emph{independent} continuous-time random walks with the same transition kernel \(J(\cdot)\) as \(W\), but with respective jump rates $(1-\rho)$ and $\rho$. 
In other words, we can write
\[
X_t = W^{(1)}_{(1-\rho)t} \quad \text{ and }\quad Y_t = W^{(2)}_{\rho t} , 
\]
where $W^{(1)},W^{(2)}$ are two independent copies of $W$.
Since~$X$ and~$Y$ play different roles, we use different letters to denote their distribution: we let $\bP_{1-\rho}$ (or simply~$\bP$) denote the law of~$X$ and $\bbP_{\rho}$ (or simply~$\bbP$) the law of~$Y$. 
Given $T>0$ (the polymer length) and a fixed realization of $Y$ (\emph{quenched disorder}) we define an energy functional on the set of trajectories by setting
\[
H^Y_T(X):=\int^T_0 \ind_{\{X_t=Y_t\}} \dd t \,.
\]
Then, given $\gb>0$ (the inverse temperature), the Random Walk Pinning Model (RWPM) is defined as the probability distribution $\bP_{\gb,T}^Y$ which is absolutely continuous with respect to $\bP$, with Radon--Nikodym density given by
\begin{equation}
\label{def:gibbs}
\frac{\dd \bP^{Y}_{\gb,T}}{ \dd \bP} (X)  := \frac{1}{Z_{\gb,T}^{Y}}\, e^{\beta H^Y_T(X)} \,, \quad  \text{ where } \quad 
Z_{\gb,T}^{Y} := \bE\Big[ e^{\beta H^Y_T(X)} \Big] \,.
\end{equation}
 The renormalization factor $Z_{\gb,T}^{Y}$ makes $\bP_{\gb,T}^Y$ a probability measure and is referred to as the 
partition function of the model. 
When compared with $\bP$, the measure $\bP^{Y}_{\gb,T}$ favors trajectories $(X_t)_{t\ge 0}$ which overlap with $Y$ within the time interval $[0,T]$.
For convenience a \textit{constrained boundary} analogue of the partition function is defined by adding constraint $X_T=Y_T$ 
and a multiplicative factor $\beta$:
\begin{equation}
 Z_{\gb,T}^{Y,\mathrm{c}} := \beta\bE\Big[ e^{\beta H^Y_T(X)} \ind_{X_T=Y_T}\Big] \,.
\end{equation}

\subsection{Free energy, phase transition and annealing}

We introduce the free energy of the model and the critical point \(\beta_c(\rho)\)  which marks a localization phase transition (we refer to \cite[App.~A]{BLirrel} for a proof). 

\begin{proposition}
\label{freeenergy}
The \emph{quenched free energy}, defined by
\[
\tf(\rho,\beta):= \lim_{T\to \infty} \frac{1}{T} \log  Z^{Y}_{\beta,T} = \lim_{T\to \infty} \frac{1}{T}\bbE\left[ \log  Z^{Y}_{\beta,T}\right],\]
exists for every $\rho\in [0,1)$ and $\beta>0$ and the convergence holds $\bbP$-almost surely and in $L^1(\bbP)$.
 It satisfies the following properties:
(i) for every $\beta$ and $\rho$, $\tf(\rho,\beta)\ge 0$;
(ii) the function $\beta\mapsto \tf(\rho,\beta)$ is non-decreasing and convex;
(iii) the function $\rho\mapsto \tf(\rho,\beta)$ is non-increasing.
We can then define the critical point 
\[
\beta_c(\rho):= \inf\big\{ \gb > 0 \,:\,  \tf(\rho,\gb) >0 \big\} \,,
\]
and we have: (iv) the function $\rho\mapsto \beta_c(\rho)$ is non-decreasing.
\end{proposition}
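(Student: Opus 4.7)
The plan is to exploit the Markov property to turn $n\mapsto \log Z^{Y,\mathrm{c}}_{\beta,n}$ into a sum of i.i.d.\ random variables. The Markov property of $X$ at time $S$, after conditioning on $\{X_S=Y_S\}$ and using translation invariance of the kernel~$J$, yields the identity
\[
Z^{Y,\mathrm{c}}_{\beta,S+T} \;=\; \beta^{-1}\, Z^{Y,\mathrm{c}}_{\beta,S}\cdot Z^{\theta_S Y,\mathrm{c}}_{\beta,T}\,,
\]
where $\theta_S Y:=(Y_{S+t}-Y_S)_{t\geq 0}$ has the law of $Y$ under $\bbP$ and is independent of $(Y_t)_{t\leq S}$, by the independent and stationary increments of $Y$. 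Iterating at integer times gives $\log Z^{Y,\mathrm{c}}_{\beta,n}=\sum_{k=1}^{n}U_k-(n-1)\log\beta$ with $(U_k)_{k\geq 1}$ i.i.d., and the strong law of large numbers then delivers both $\bbP$-a.s.\ and $L^1$ convergence of $n^{-1}\log Z^{Y,\mathrm{c}}_{\beta,n}$, provided $U_1\in L^1(\bbP)$. The upper bound $U_1\leq \log\beta+\beta$ is immediate from $0\leq H^Y_1(X)\leq 1$; for the lower bound one uses $Z^{Y,\mathrm{c}}_{\beta,1}\geq \beta\,\bP(X_1=Y_1)\geq c\,\beta\,J(Y_1)$ together with $\bbE[|\log J(Y_1)|]<\infty$, which holds by~\eqref{JPP} and $\bbE[\log(1+|Y_1|)]<\infty$ (valid for any $\gamma$-stable walk). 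Extension from $T=n$ to general $T$ uses the sandwich $Z^Y_{\beta,\lfloor T\rfloor}\leq Z^Y_{\beta,T}\leq e^\beta Z^Y_{\beta,\lfloor T\rfloor}$, and replacing $Z^{Y,\mathrm{c}}$ by $Z^Y$ changes the prefactor only by a polynomial factor in $T$, controlled by standard local heat-kernel estimates.

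\textbf{Easy properties.} Property~(i) follows from $Z^Y_{\beta,T}\geq 1$, a consequence of $H^Y_T\geq 0$. For~(ii), $\beta\mapsto Z^Y_{\beta,T}$ is non-decreasing (again because $H^Y_T\geq 0$), and $\beta\mapsto \log Z^Y_{\beta,T}$ is convex (H\"older's inequality; equivalently, it is the log-moment-generating function of $H^Y_T$ under~$\bP$); both properties pass to the a.s.\ limit. Property~(iv) is a formal consequence of~(ii) and~(iii): the level set $\{\beta:\tf(\rho,\beta)=0\}$ is an interval $[0,\beta_c(\rho)]$ by convexity/monotonicity from~(ii), and (iii) forces this interval to grow with~$\rho$.

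\textbf{Monotonicity in $\rho$ (property (iii)).} This is the only non-trivial point. Given $0\leq\rho<\rho'$, introduce an independent continuous-time walk $\Delta$ of kernel $J$ and rate $\rho'-\rho$. Poissonian splitting of the jump times gives the distributional identity $X^{(1-\rho)}\stackrel{d}{=} X^{(1-\rho')}+\Delta$ between independent summands (here $X^{(r)}$ denotes a walk of kernel $J$ and rate $r$). Plugging into the $\rho$-partition function and conditioning on $\Delta$,
\[
Z^{Y}_{\beta,T}\big\vert_{\text{rate }1-\rho} \;=\; \bE_{\Delta}\!\left[Z^{Y-\Delta}_{\beta,T}\big\vert_{\text{rate }1-\rho'}\right],
\]
so Jensen's inequality applied to $\log$ gives
\[
\log Z^Y_{\beta,T}\big\vert_{\text{rate }1-\rho} \;\geq\; \bE_{\Delta}\!\left[\log Z^{Y-\Delta}_{\beta,T}\big\vert_{\text{rate }1-\rho'}\right].
\]
Taking $\bbE$ over $Y$ of rate $\rho$ (using Fubini, together with the symmetry of $J$, which forces $Y-\Delta\stackrel{d}{=} Y+\Delta$, a walk of rate $\rho+(\rho'-\rho)=\rho'$) yields $\bbE^{(\rho)}[\log Z^Y_{\beta,T}]\geq \bbE^{(\rho')}[\log Z^Y_{\beta,T}]$, and dividing by~$T$ and letting $T\to\infty$ gives $\tf(\rho,\beta)\geq \tf(\rho',\beta)$.

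\textbf{Main obstacle.} The only delicate technical point is the integrability check for $U_1$ in the existence step; everything else reduces to direct applications of the Markov property, H\"older's and Jensen's inequalities, and the single coupling argument above.
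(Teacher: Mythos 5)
The proposal is mostly sound, but the existence step contains a genuine error.

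\textbf{Existence of the limit.} The claimed identity
\[
Z^{Y,\mathrm{c}}_{\beta,S+T} \;=\; \beta^{-1}\, Z^{Y,\mathrm{c}}_{\beta,S}\cdot Z^{\theta_S Y,\mathrm{c}}_{\beta,T}
\]
is false. The left-hand side sums over all trajectories $X$ pinned only at time $S+T$, including those with $X_S\neq Y_S$; the right-hand side corresponds exactly to the sub-collection that is additionally pinned at time $S$. The Markov property therefore yields only the super-multiplicative inequality
\[
Z^{Y,\mathrm{c}}_{\beta,S+T} \;\geq\; \beta^{-1}\, Z^{Y,\mathrm{c}}_{\beta,S}\cdot Z^{\theta_S Y,\mathrm{c}}_{\beta,T}\,,
\]
obtained by inserting the indicator $\ind_{\{X_S=Y_S\}}$ inside the expectation. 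Consequently $n\mapsto\log\big(\beta^{-1}Z^{Y,\mathrm{c}}_{\beta,n}\big)$ is a super-additive (not an i.i.d.\ sum) process along the ergodic shift $\theta_1$ of $Y$, and the correct tool is Kingman's subadditive ergodic theorem, not the strong law of large numbers. Your integrability check $\bbE[|\log Z^{Y,\mathrm{c}}_{\beta,1}|]<\infty$ is still exactly what Kingman requires (it provides the upper bound $\bbE[\log^+]<\infty$ directly and, via super-additivity, the lower bound $\inf_n n^{-1}\bbE[\log Z^{Y,\mathrm{c}}_{\beta,n}]\geq\bbE[\log(\beta^{-1}Z^{Y,\mathrm{c}}_{\beta,1})]>-\infty$), so the fix is to replace the SLLN invocation by Kingman's theorem, which also delivers a.s.\ and $L^1$ convergence. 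The sandwich $Z^Y_{\beta,\lfloor T\rfloor}\leq Z^Y_{\beta,T}\leq e^\beta Z^Y_{\beta,\lfloor T\rfloor}$ and the comparison with the constrained partition function are fine as stated.

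\textbf{Remaining properties.} Properties (i), (ii) and (iv) are correctly handled. Your proof of (iii) via Poissonian thinning of the jump process ($X^{(1-\rho)}\stackrel{d}{=}X^{(1-\rho')}+\Delta$, followed by Jensen's inequality for $\log$ and the symmetry of $J$) is correct and is the standard argument for monotonicity of the RWPM free energy in the disorder rate (it goes back to Birkner and Sun). Note that the paper itself does not prove \Cref{freeenergy} here but refers to the companion paper's Appendix A; your coupling argument for (iii) is almost certainly the one used there, so the only substantive deviation from a correct proof is the multiplicativity error discussed above.
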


\noindent We introduce a specific notation for the (constrained) partition function in the specific \textit{homogeneous} case $\rho=0$, setting 
\begin{equation}
  \label{annealedpartz}
 z^{\cons}_{\beta,T}:=\beta \E\left[ e^{\beta \int^T_0  \ind_{\{W_s=0\}} \dd s}\ind_{\{W_T=0\}}\right] \,,
\end{equation}
and we also denote the homogeneous free energy simply by \(\tf(\beta) = \tf(0,\beta)= \lim_{T \to \infty} \frac{1}{T}\log  z^{\cons}_{\beta,T} \).
Let us stress that the homogeneous free energy has an implicit representation: setting
\begin{equation}
\label{def:beta0}
\beta_0:= \left(\int^{\infty}_0 \P(W_t=0) \dd s\right)^{-1} \,,
\end{equation}
we have \(\tf(\beta)=0\) if \(\beta\leq \beta_0\) and \( \int_0^{+\infty} e^{- \tf(\beta) t} \P(W_t=0) \dd t =  \beta^{-1}\) if \(\beta\geq \beta_0\).
Note also that, since \(W\) is transient, we have \(\beta_0>0\).
The computation of the asymptotic properties of $\P(W_t=0)$ (using the local limit theorem, see \cite[Chapter~9]{GK68} or \Cref{sec:prelim} below) coupled with some Tauberian computation allows to deduce the following asymptotic for $\tf(\beta)$ (we refer to \cite[Theorem~2.1]{Gia07} for the analogous result in a discrete time setting and its proof).
\begin{proposition}\label{homener}
The homogeneous free energy has the following critical behavior:
\begin{equation*}
\tf(\gb) \stackrel{\beta\downarrow \beta_0}{\sim}  (\gb-\gb_0)^{\nu} \hat L\left(\frac{1}{\gb-\gb_0}\right) ,
\end{equation*}
 where $\nu=\frac{\gamma}{1-\gamma}\wedge 1$ and $\hat L$ is a (explicit) slowly varying function.  The function $\hat L$ can be replaced by a constant when $\varphi$ is asymptotically constant and $\gamma\ne 1/2$.
\end{proposition}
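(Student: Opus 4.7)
The plan is to start from the implicit characterisation of $\tf(\beta)$ recalled just above the statement: for every $\beta>\beta_0$, $\tf(\beta)$ is the unique positive $s$ solving $\int_0^{\infty} e^{-st}\, \P(W_t=0)\, \dd t = \beta^{-1}$. Subtracting this identity from $\beta_0^{-1} = \int_0^{\infty} \P(W_t=0)\, \dd t$ gives
\begin{equation*}
\Psi(\tf(\beta)) \;=\; \beta_0^{-1}-\beta^{-1} \;=\; \frac{\beta-\beta_0}{\beta \beta_0}\,, \qquad \Psi(s)\,:=\, \int_0^{\infty} (1-e^{-st})\, \P(W_t=0)\, \dd t\,,
\end{equation*}
so that $\tf(\beta) = \Psi^{-1}\!\big(\beta_0^{-2}(\beta-\beta_0)(1+o(1))\big)$. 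The proposition is thus reduced to determining the behaviour of $\Psi(s)$ as $s\downarrow 0^+$ and then inverting.

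The first step is a local-limit estimate for $f(t):=\P(W_t=0)$. Writing $W_t=\xi_1+\cdots+\xi_{N_t}$ with $N_t\sim \mathrm{Poi}(t)$ and $\xi_i$ i.i.d.\ of law $J$, the Gnedenko--Kolmogorov local limit theorem for $\gamma$-stable laws (see \cite[Chapter~9]{GK68}) yields $\P(\xi_1+\cdots+\xi_n=0)\sim c_\gamma L(n)\, n^{-1/\gamma}$, with $L$ slowly varying and explicit in terms of $\varphi$ through the normalising sequence of the $\gamma$-stable limit; a standard Poisson concentration of $N_t$ around $t$ transfers the estimate to continuous time, giving $f(t)\sim c_\gamma L(t)\, t^{-1/\gamma}$ as $t\to\infty$. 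Integrating by parts, $\Psi(s)=s\int_0^{\infty} e^{-st} G(t)\,\dd t$ with $G(t):=\int_t^{\infty} f(u)\,\dd u$, and Karamata's integration theorem produces $G(t)\sim \tfrac{c_\gamma \gamma}{1-\gamma}\, L(t)\, t^{-(1-\gamma)/\gamma}$.

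Applying Karamata's Abelian/Tauberian theorem to $\int_0^{\infty} e^{-st} G(t)\,\dd t$ then separates into cases according to the regular-variation index $(1-\gamma)/\gamma$ of $G$: when this index lies in $(0,1)$ one gets $\Psi(s)\sim \kappa_\gamma L(1/s)\, s^{(1-\gamma)/\gamma}$ for an explicit constant $\kappa_\gamma$; when it is $>1$, i.e.\ when $\int_0^{\infty} t\, f(t)\,\dd t<\infty$, dominated convergence gives the linear behaviour $\Psi(s)\sim s\int_0^{\infty} t f(t)\,\dd t$; and the borderline case picks up a logarithmic correction of the form $\Psi(s)\sim C\, s\log(1/s)\, L(1/s)$. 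Combining with the inversion result for regularly varying functions (see e.g.\ \cite[Thm.~1.5.12]{BGT89}) yields the claimed asymptotics $\tf(\beta)\sim (\beta-\beta_0)^{\nu}\hat L(1/(\beta-\beta_0))$ with the correct exponent $\nu$ in each subcase, and the marginal case is responsible for the inverse logarithm showing up inside $\hat L$. The fact that $\hat L$ may be replaced by a constant when $\varphi$ is asymptotically constant and $\gamma\neq 1/2$ is then immediate, since in that regime the $L$ in the local limit theorem is itself asymptotically constant and the inversion procedure preserves this property away from the critical value $\gamma=1/2$.

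The main technical obstacle is the careful bookkeeping of the slowly varying functions and multiplicative constants through the chain \emph{local limit theorem $\to$ integration by parts $\to$ Karamata Tauberian $\to$ asymptotic inversion}, in particular in order to identify the explicit form of $\hat L$. A secondary, but entirely standard, difficulty is handling the borderline value $\gamma=1/2$, where the Tauberian step is marginal and produces the extra logarithmic factor that prevents $\hat L$ from being chosen constant even when $\varphi$ is.
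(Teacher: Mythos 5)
Your argument is correct and follows the same route the paper points to via its citation of \cite[Theorem~2.1]{Gia07}: starting from the implicit equation $\int_0^\infty e^{-\tf(\beta)t}\,\P(W_t=0)\,\dd t=\beta^{-1}$, translating the local-limit asymptotics for $\P(W_t=0)$ into a regular-variation statement for the Laplace transform via Karamata's Abelian theorem, and finishing by asymptotic inversion. One point your computation makes explicit and is worth flagging: the exponent you actually obtain is $\nu = \frac{\gamma}{1-\gamma}\vee 1 = \frac{1}{\alpha\wedge 1}$ with $\alpha=\frac{1-\gamma}{\gamma}$ (linear, $\nu=1$, when the renewal has finite mean, \emph{i.e.}\ $\gamma<\tfrac12$; and $\nu=1/\alpha>1$ when $\gamma>\tfrac12$), which matches both the formula $\nu=1\vee\frac{\gamma}{1-\gamma}$ in Theorem~\ref{rele1} and the identity $\nu=\frac{1}{\alpha\wedge 1}$ used in the proof of Proposition~\ref{prop:key}, and reveals that the $\wedge$ in the statement of Proposition~\ref{homener} should be a $\vee$.
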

Note that, for any $\rho\in (0,1)$, we have $\bbE[Z^{Y,\mathrm c}_{\beta,T}]=  z^{\cons}_{\beta,T}$ since \(X-Y \stackrel{(d)}{=}W\).
(This is in fact the main reason why we choose \(X,Y\) to have jump rates \(1-\rho,\rho\) respectively: the annealed model has then no dependence on \(\rho\) anymore.)
Moreover, as a particular case of item $(iii)$-$(iv)$ in \Cref{freeenergy} above, we have 
\begin{equation*}
 \tf(\beta,\rho)\le \tf(\beta) \quad \text{ and } \quad  \beta_c(\rho)\ge \beta_c(0)=\beta_0.
\end{equation*}
In this paper we investigate how accurate the above inequalities are by establishing improved upper bounds on the free energy.

\subsection{Main results}

We divide our results into two parts: the \textit{relevant disorder} regime \(\gamma\in (0,\frac23]\) where we prove a critical point shift \(\beta_c(\rho)>\beta_0\) for any \(\rho>0\), and the \textit{irrelevant disorder} regime \(\gamma \in (\frac23,1)\) where we prove better upper bounds on the free energy (and on the partition function at criticality).

\subsubsection{The relevant disorder regime $\gamma\in (0,\frac{2}{3}]$}

Our first results give lower bounds on the critical point shift in the case when $\gamma\le \frac23$ (which corresponds to $\nu\le 2$).
Let us start with the case $\gamma \in (0,\frac23)$.
For simplicity, to avoid spurious slowly varying factors (that would not have much effect in the proof), we assume in that case that $\varphi$ tends to one; for the same reason, we also exclude the special case $\gamma=\frac12$. 
We therefore only treat the case $\gamma\in (0,\frac23)\setminus\{\frac12\}$ and we suppose that
\begin{equation}
  \label{simpleJPP}
J(x)\stackrel{  |x|\to\infty }{\sim} |x|^{-(1+\gamma)}. 
\end{equation}
\begin{theorem}\label{rele1}
  Assume that~\eqref{simpleJPP} holds with $\gamma\in (0,\frac23)\setminus\{\frac12\}$.
  Then there is some constant $c=c(J)>0$ such that for any \(\rho \in (0,\frac12)\)
  \begin{equation}\label{lkl2}
  \beta_c(\rho)-\beta_0\ge c\, \rho^{\frac{1}{2-\nu}}  \qquad \text{with}\quad  \nu= 1\vee \frac{\gamma}{1-\gamma}\,.
  \end{equation}
  Let us note that \( \frac{1}{2-\nu} = \frac{1-\gamma}{2-3\gamma}\vee 1\).
\end{theorem}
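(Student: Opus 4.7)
The approach I would take is the \emph{fractional moment method combined with coarse-graining and a change of measure on the disorder}, now standard for proving disorder relevance in pinning-type models. The starting point is that $\tf(\rho,\beta)=0$ provided $\bbE[(Z^{Y,\mathrm{c}}_{\beta,T})^\theta]$ grows sub-exponentially in $T$ for some $\theta \in (0,1)$. Partition $[0,T]$ into $n = T/\ell$ blocks of length $\ell$ (to be chosen as a function of $\beta-\beta_0$ and $\rho$) and decompose the partition function by summing over the set of blocks that contain a contact between $X$ and $Y$. Using sub-additivity $(a+b)^\theta \leq a^\theta + b^\theta$ together with a bound on the combinatorial enumeration over contact patterns, the problem reduces to a single-block estimate of the form $\bbE[(Z^{Y,\mathrm{c}}_{\beta,\ell})^\theta] \leq c_1$, with $c_1$ chosen small enough to offset the entropy of contact patterns.

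To obtain such a single-block estimate, I would introduce a tilted law $\tilde\bbP$ on $(Y_t)_{t\in [0,\ell]}$ favoring trajectories of $Y$ that promote contacts with $X$, for instance via an exponential tilt of the occupation time of $Y$ in a suitable ball around the origin, or of a closely related observable. Hölder's inequality then yields
\[
\bbE\big[(Z^{Y,\mathrm{c}}_{\beta,\ell})^\theta\big] \;\leq\; \tilde\bbE\Big[\Big(\tfrac{d\bbP}{d\tilde\bbP}\Big)^{\theta/(1-\theta)}\Big]^{1-\theta}\; \tilde\bbE\big[Z^{Y,\mathrm{c}}_{\beta,\ell}\big]^\theta,
\]
where the first factor is the entropic cost of the tilt, scaling polynomially in $\rho$, $\ell$ and the tilt strength, and the second is a ``tilted annealed'' partition function. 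Since $X-Y \stackrel{(d)}{=} W$ under $\bbP \otimes \bbbP_\rho$, the tilted annealed term reduces to a homogeneous partition function for $W$ (with a modified measure), whose exponential growth is controlled via the sharp asymptotics $\tf(\beta) \sim (\beta-\beta_0)^\nu$ furnished by \Cref{homener}.

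Choosing $\ell$ of the order of the homogeneous correlation length $(\beta-\beta_0)^{-\nu}$ and tuning the tilt strength so that its entropic cost matches the free-energy gain should then produce the threshold $\beta - \beta_0 \lesssim \rho^{1/(2-\nu)}$. The exponent $1/(2-\nu)$ encodes the expected Harris-type balance: the disorder-induced fluctuation of the energy over a correlation length is a power of $\rho\ell$, and equating it with the mean free-energy gain $\tf(\beta)\cdot\ell$ yields the announced relation. The main obstacle will be designing the change of measure so that both the Radon--Nikodym cost and the tilted partition function admit sharp, tractable estimates; the exclusion of $\gamma=1/2$ and the pure power-law form \eqref{simpleJPP} for $J$ are intended precisely to avoid nuisances from slowly varying corrections in the local limit theorem. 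The regime $\gamma \in (1/2, 2/3)$, where $\nu>1$ and the homogeneous free energy vanishes super-linearly, is likely to require a finer tilt acting on a global functional of $Y$ rather than on purely local occupation statistics, and handling the boundary case $\gamma=2/3$ (the marginal regime) should demand an even more careful optimization within this scheme.
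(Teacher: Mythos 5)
Your high-level framework is exactly the one the paper uses: fractional moments, coarse-graining into blocks of size comparable to the annealed correlation length $T=\tf(\beta)^{-1}$, and a change of measure on the disorder applied block-by-block via H\"older's inequality (\Cref{sec:method}). The Harris-type balance you describe, equating the entropic cost of the change of measure with the free-energy gain over a correlation length, is also the correct heuristic, and the choice $T \asymp (\beta-\beta_0)^{-\nu}$ is what the paper does.

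Where your proposal diverges, and where I think it has a genuine gap, is in the choice of the tilted observable. You propose ``an exponential tilt of the occupation time of $Y$ in a suitable ball around the origin.'' This is not what the paper does, and it is not clear it would work: there is no reason for the size-biased measure to concentrate $Y$ spatially near the origin, so the occupation time of a fixed ball is not the statistic that separates $\bbP$ from the size-biased law. The paper's key insight (established in \Cref{sec:prelim}) is that the partition function can be rewritten as a weighted renewal measure, and under the resulting weighted law $\bbP_{\tau}$ the walk $Y$ is conditioned to return to its starting point at each renewal of $\tau$, which makes $Y$ \emph{jump less frequently} — a statement made rigorous by the stochastic comparison in \Cref{lem:domination}, which relies on the unimodality of $J$ and an auxiliary Poisson construction of $Y$. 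Consequently, the change-of-measure event used in the proof of \Cref{rele1} (see \Cref{sec<2/3}) is the \emph{total jump count} $\cJ_{(0,T]}$ being anomalously small, specifically $\cA = \{\cJ_{(0,T]} < \rho T - R\sqrt{\rho T}\}$, together with a renewal-side event $B$ controlling the number of short gaps of $\tau$. It is an indicator penalization, not an exponential tilt, and the estimate on $\bbP_{\tau}(\cA^{\cc})$ leans directly on the monotonicity coming from \Cref{lem:domination} rather than on any explicit computation of a tilted annealed partition function. Your sketch has no substitute for these structural tools, and your suggestion that the regime $\gamma\in(\frac12,\frac23)$ requires a ``finer, global'' tilt is off target — the paper uses exactly the same jump-count event throughout $\gamma\in(0,\frac23)\setminus\{\frac12\}$; the exponent $1/(2-\nu)$ simply emerges from the choice of $T$ and the Poisson deviation bound. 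In short: right scaffolding, but the central choice of observable and the monotonicity/size-biasing machinery needed to estimate it are missing.
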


\noindent
Let us mention that \cite[Theorem 2.3]{BLirrel} proves that this lower bound is sharp: it shows that there is a constant \(C>0\) such that \(\beta_c(\rho)-\beta_0\le C \rho^{\frac{1}{2-\nu}}\) for \(\rho \in (0,\frac12)\).

Let us now turn to the marginal case \(\gamma=\frac23\).
In this case, we work with a slowly varying function~\(\varphi\) in \eqref{JPP}, and we show that there is \textit{always} a critical point shift \(\beta_c(\rho)>\beta_0\), no matter what the slowly varying function \(\varphi\) is.
For the ease of the exposition, we only highlight the lower bound on the critical point shift obtained in the case where \(\varphi\) is asymptotic to a power of \(\log\). The expression of the lower bound in the general case, which is more involved, is given in \Cref{prop:key}-\ref{iii2/3} below. 

\begin{theorem}\label{rele2}
Assume that \eqref{JPP} holds with \(\gamma=\frac23\).
Then we have that $\beta_c(\rho)>\beta_0$ for any $\rho >0$.
Furthermore, if \eqref{JPP}  holds with  $\varphi(t) \stackrel{t\to \infty}{\sim}  (\log t)^{\kappa}$ for some $\kappa\in \bbR$, then there exists $c = c(J)>0$ such that, for any $\rho\in (0,\frac12)$ 
\begin{equation}
  \label{soluce2}
  \log (\beta_c(\rho)-\beta_0 ) \ge -c \,
  \begin{cases}  
     \rho^{-\frac{1}{3\kappa}}  & \text{ if } \kappa>1/3 \,,\\ 
     \rho^{-1} \log\big( \tfrac{1}{\rho}\big)   & \text{ if } \kappa=1/3 \,,\\
     \rho^{-1} & \text{ if } \kappa<1/3 \,.
  \end{cases}
 \end{equation}
\end{theorem}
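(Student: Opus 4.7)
The approach is the fractional moment / change of measure / coarse-graining method: I aim to bound $\bbE[(Z^{Y,\mathrm{c}}_{\beta,T})^\theta]$ for a suitable $\theta \in (0,1)$ by a quantity that stays bounded (or vanishes) as $T \to \infty$, which forces $\tf(\rho, \beta) = 0$ and hence $\beta_c(\rho) \geq \beta$. Choosing $\beta - \beta_0$ as a function of $\rho$ that just barely allows this conclusion will produce the quantitative bound \eqref{soluce2}, and the soft part ($\beta_c(\rho) > \beta_0$ for every $\rho > 0$) follows from the same scheme without quantitative optimization.

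First, I would fix $\theta \in (0,1)$ (say $\theta = 1/2$) and coarse-grain at scale $\ell$: decompose $[0,T]$ into blocks of length $\ell$ and expand $Z^{Y,\mathrm{c}}_{\beta,T}$ as a sum over the subset of blocks in which $X$ and $Y$ make contact, so that subadditivity of $x \mapsto x^\theta$ reduces the problem to estimating the fractional moment of a single-block partition function. I would then introduce a tilted law $\tilde\bbP$ for the disorder $Y$ on $[0,\ell]$, with density proportional to $e^{g(Y)}$, where $g$ is a functional designed to penalize configurations of $Y$ producing atypically large values of $Z^{Y,\mathrm{c}}_{\beta,\ell}$. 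Hölder's inequality then gives
\begin{equation*}
\bbE\big[(Z^{Y,\mathrm{c}}_{\beta,\ell})^\theta\big] \leq \tilde\bbE\Big[\big(\tfrac{\dd\bbP}{\dd\tilde\bbP}\big)^{\!1/(1-\theta)}\Big]^{1-\theta} \cdot \tilde\bbE[Z^{Y,\mathrm{c}}_{\beta,\ell}]^\theta,
\end{equation*}
and the two factors can be estimated separately: the first by a quadratic-type bound on $g$ under $\bbP$ (scaling with $\rho$ through the jump rate of $Y$ and with $\ell$ through the tails of $J$), and the second by direct computation using $X - Y \stackrel{(d)}{=} W$ together with the homogeneous-pinning representation \eqref{annealedpartz} of the annealed model.

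The block size $\ell$ is then set to be of order the annealed correlation length $1/\tf(\beta)$, which by \Cref{homener} is of order $(\beta - \beta_0)^{-2}$ times a slowly varying factor (since $\nu = 2$ when $\gamma = 2/3$). Balancing the tilt cost and the annealed gain as functions of $\rho$ and $\beta - \beta_0$ yields the bound \eqref{soluce2}. The threshold $\kappa = 1/3$ arises because $\varphi(\ell) \sim (\log \ell)^\kappa$ enters the cost of the tilt (via the tails of $J$ that control how the change of measure on $Y$ propagates to local intersection statistics), while the slowly varying factor $\hat L$ from \Cref{homener} enters the annealed free energy; $\kappa = 1/3$ is the value at which these two slowly varying contributions are in exact balance at the critical block scale.

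The main obstacle is the design of the functional $g$. In the strictly relevant regime $\gamma < \tfrac23$ (\Cref{rele1}), a one-block $L^2$-type tilt on a local observable of $Y$ suffices. At marginality $\gamma = \tfrac23$, however, the naive single-block second-moment estimate diverges logarithmically and cannot produce a net fractional-moment gain; one must instead use a multi-block or pair-tilting scheme biased on joint contact statistics of $Y$ at distinct spatial locations, in the spirit of the marginal change-of-measure arguments developed for the disordered pinning model. Adapting this technique to the RWPM — where the disorder is itself a random walk trajectory, so correlations in $Y$ must be tracked explicitly — and carrying the slowly varying factor $\varphi$ through the variance computation in order to identify the correct threshold $\kappa = 1/3$ constitutes the technical heart of the proof.
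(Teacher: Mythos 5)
Your high-level framework (fractional moments, coarse-graining at the annealed correlation length, change of measure via Hölder, balance of slowly varying contributions producing the threshold $\kappa=1/3$) matches the paper's architecture. But your key technical claim about how to handle marginality is wrong, and this is precisely where the proof lives. You write that at $\gamma=\tfrac23$ ``the naive single-block second-moment estimate diverges logarithmically'' and that one ``must instead use a multi-block or pair-tilting scheme biased on joint contact statistics of $Y$ at distinct spatial locations,'' by analogy with the GLT-style marginal change of measure for the disordered pinning model. The paper does the opposite: it stays within a \emph{one-block}, \emph{first/second-moment} change of measure, but replaces the plain jump count (which suffices for $\gamma<\tfrac23$ since there the constraint is $\rho\gtrsim T^{1-2(\alpha\wedge 1)}$ with $1-2\alpha<0$, whereas at $\gamma=\tfrac23$ this exponent vanishes) by the \emph{weighted} jump statistic
\[
F_T=\sum_{i:\,\vartheta_i\in(0,T]}\xi(U_i)\,\ind_{\{U_iK(T)\le 2\beta_0\}},\qquad \xi(k)=k^{1/3}\varphi(k)^{-2},
\]
built on the auxiliary Poisson ``amplitude'' variables $U_i$, and uses the stochastic domination of \Cref{lem:domination} to compare $\bbP$ and $\bbP_\tau$. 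The weight $\xi$ is tuned so that the expectation shift $\bbE[F_T]-\bbE_\tau[F_T]$ and the variance $\Var_\bbP(F_T)$ both involve the same integral $S(T)=\int_1^{1/K(T)}\frac{\dd s}{s\varphi(s)^3}$, making all scales of jumps contribute comparably; this is what yields the $\gep$-good event under the optimal constraint $\rho\gtrsim 1/\psi(T)$ with $\psi(T)=\varphi(1/K(T))^3S(T)\to\infty$, and hence relevance for \emph{every} $\rho>0$ regardless of $\varphi$. This is a structurally different (and simpler) mechanism than pinning-model pair-tilting, and indeed this is precisely why the RWPM behaves differently at marginality from the pinning model (where relevance at $\nu=2$ depends on $\varphi$). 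So your outline, while pointing in the right general direction, leaves the decisive technical step unresolved and conjectures a mechanism that is not the one that works here.

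Two smaller points: the paper's tilt is not a smooth exponential $e^{g(Y)}$ but a hard indicator tilt $g=\ind_{\cA^\cc}+\eta\ind_\cA$ on an explicit $\cF_{[0,T]}$-measurable event $\cA$, which is what makes the Hölder factor $\bbE[g^{-\theta/(1-\theta)}]$ trivially $\leq 2^{|I|}$; and the use of the nonstandard Poisson construction $(U_i,V_i,\vartheta_i)$ and the unimodality/monotonicity of $J$ are load-bearing in the variance and expectation-shift computations, not cosmetic. If you pursue the one-block weighted-statistic route, those ingredients are essential.
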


\noindent
Again, let us mention that~\cite[Theorem 2.6]{BLirrel} provides close to matching upper bounds on the critical point shift.
More precisely, for \(\rho \in (0,\frac12)\), \(\log (\beta_c(\rho)-\beta_0 )\) is bounded from above by \(-C \rho^{-1/(1+3\kappa)}\) if \(\kappa>1/3\) and by \(-C \rho^{-1/2}\) if \(\kappa<1/3\) (with a logarithmic correction when \(\kappa=1/3\)).
We believe that the lower bounds of \Cref{rele2} are sharp.

\subsubsection{The irrelevant disorder regime $\gamma\in (\frac23,1)$}

Once again for the sake of making the proof more readable we only consider the case \eqref{simpleJPP}, \textit{i.e.}\ the slowly varyinf function \(\varphi\) tends to one.
We prove in \cite[Theorem 2.1]{BLirrel} that for $\gamma\in (\frac23,1)$ 
\begin{equation}\label{iurl}
  \lim_{\rho\downarrow 0}  \lim_{\beta\downarrow \beta_0}\frac{\tf(\rho,\beta)}{\tf(\beta)} =1 \,,
\end{equation}
which implies in particular that \(\beta_c(\rho)=\beta_0\) for \(\rho\) sufficiently small. 

A natural question is then whether, for some fixed value of $\rho>0$, we have $\tf(\rho,\beta) \stackrel{\beta \downarrow \beta_0}{\sim} \tf(\beta)$. 
The following result yields a negative answer, contrasting with what has been obtained for the disordered pinning model, see~\cite[Thm.~2.3]{GT09}.
\begin{proposition}
  \label{prop:Firrel}
Assume that \eqref{simpleJPP} holds with $\gamma\in (\frac{2}{3},1)$. Then
there exists a constant \(c>0\) such that, for every $\rho\in (0,1)$ we have 
\begin{equation}\label{apenasdif}
 \limsup_{\gb \downarrow \gb_0} \frac{\tf(\rho,\beta)}{\tf(\gb)} \leq 1-c  \rho  <1 \,.
\end{equation}
\end{proposition}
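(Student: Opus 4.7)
The plan is to prove the strict inequality by analysing the first-order behaviour of $\tf(\rho,\beta)$ at $\rho=0$ via Mecke's formula for the Poisson disorder $Y$, combined with a precise computation of the single-jump penalty near criticality.

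At $\rho=0$ one decomposes $\partial_\rho \tf(\rho,\beta)|_{\rho=0}$ into two competing contributions. The first, $(a)$, comes from the fact that $X$ has jump rate $1-\rho$. Using the identity $\tf_r(\beta)=r\tf(\beta/r)$, valid when $Y\equiv 0$ and obtained by the time change $s=rt$, one computes $(a)=\beta\tf'(\beta)-\tf(\beta)$, which is positive for $\beta>\beta_0$. The second, $(b)$, comes from the appearance of Poisson $Y$-jumps at rate $\rho$. By Mecke's formula,
\[
(b)\ =\ \lim_{T\to\infty}\frac{1}{T}\int_0^T\sum_{L\ne 0}J(L)\bigl[\log Z^{Y_{s,L},\cons}_{\beta,T}-\log z^{\cons}_{\beta,T}\bigr]\,\dd s,
\]
where $Y_{s,L}$ denotes the trajectory with a single jump of size $L$ at time $s$; this quantity is $\le 0$ as displacing $Y$ necessarily reduces the pinning.

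To evaluate $(b)$, I apply the Markov property at the jump time to factorise $Z^{Y_{s,L},\cons}_{\beta,T}=\beta^{-1}\sum_{x\in\bbZ}A_s(x)B_{T-s}(x,L)$, where $A_s(x)=\beta\,\bE[e^{\beta\int_0^s \ind_{\{X_t=0\}}\dd t}\ind_{\{X_s=x\}}]$ and $B_u(x,L)$ is its translate for pinning at $L$ from start $x$. Renewal asymptotics at the simple pole of the Laplace transform of the homogeneous partition function (behind \Cref{homener}) combined with the local limit theorem for the $\gamma$-stable walk should yield, as $s$ and $T-s$ both go to infinity,
\[
\frac{Z^{Y_{s,L},\cons}_{\beta,T}}{z^{\cons}_{\beta,T}}\ \longrightarrow\ u_\infty(L;\beta)\ =\ \frac{\int_0^\infty t\,e^{-\tf(\beta)t}p_t(L)\,\dd t}{\int_0^\infty t\,e^{-\tf(\beta)t}p_t(0)\,\dd t}\ \in (0,1].
\]
A Taylor expansion using $p_t(0)-p_t(L)\sim -\tfrac12\phi''(0)L^2 t^{-3/\gamma}$ in the bulk regime $|L|\ll t^{1/\gamma}$ and the tail behaviour $p_t(L)\sim t/|L|^{1+\gamma}$ in the regime $|L|\gg t^{1/\gamma}$ should give $u_\infty(L;\beta)=1-D(L)\tf(\beta)^{2-1/\gamma}+o(\tf(\beta)^{2-1/\gamma})$ with an explicit $D(L)$ satisfying $D(L)\asymp |L|^{2\gamma-1}$ at infinity; integration against $J(L)\sim |L|^{-(1+\gamma)}$ produces a convergent $D_*:=\int J(L)D(L)\dd L<\infty$ (integrability uses $\gamma>\tfrac23$), hence $(b)\sim -D_*\tf(\beta)^{2-1/\gamma}$. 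A parallel computation of $(a)$ using $\tf(\beta)\sim C(\beta-\beta_0)^\nu$ gives $(a)=\beta\tf'(\beta)-\tf(\beta)\sim c''\tf(\beta)^{(\nu-1)/\nu}$; crucially, $(\nu-1)/\nu=2-1/\gamma$, so both $(a)$ and $(b)$ have the same critical scaling.

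The main obstacle is to establish the strict inequality $D_*>c''$, which translates into $(a)+(b)\le -c\,\tf(\beta)^{2-1/\gamma}$ for some $c>0$. Since $\tf(\beta)^{2-1/\gamma}\gg \tf(\beta)$ as $\beta\downarrow \beta_0$ (because $\gamma<1$), this in turn yields $\partial_\rho \tf(\rho,\beta)|_{\rho=0}\le -c\,\tf(\beta)$; extending the estimate uniformly for $\rho$ in a neighbourhood of $0$ and integrating in $\rho$, combined with the monotonicity $\rho\mapsto\tf(\rho,\beta)$ from \Cref{freeenergy}(iii), yields $\tf(\rho,\beta)\le (1-c\rho)\tf(\beta)$ after taking $\limsup_{\beta\downarrow\beta_0}$, for every $\rho\in(0,1)$. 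The strict inequality $D_*>c''$ is the heart of the argument: it asserts that the aggregated one-jump penalty, computed via the $\gamma$-stable transition density, strictly exceeds the entropic gain from the slower $X$-walk — a delicate quantitative cancellation at the leading critical order that encodes the dominant role of the heavy tails of $J$ in the regime $\gamma\in(\tfrac23,1)$.
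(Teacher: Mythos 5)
Your approach is genuinely different from the paper's. The paper bounds the truncated moment $\bbE[1\wedge\cW^Y_{\beta,T}]$ of the normalized partition function via a change-of-measure event $\cA$ (a deficit in the count of large jumps of $Y$), the explicit description of the size-biased measure (\Cref{lem:sizebiased}) and the stochastic comparison of \Cref{lem:domination}, then converts this into a free-energy bound by Jensen. You instead try to compute $\partial_\rho\tf(\rho,\beta)|_{\rho=0}$ perturbatively and integrate.

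The central gap is the assertion $D_*>c''$, which is in fact \emph{provably false as a strict inequality at the leading order you work at}. Your decomposition $\partial_\rho\tf|_{\rho=0}=(a)+(b)$ applies verbatim to the annealed free energy $\tf_{\mathrm{ann}}(\rho,\beta)=\lim_T\frac1T\log\bbE[Z^Y_{\beta,T}]$, which is \emph{constant in $\rho$} (since $X-Y$ always has jump rate $1$). The $X$-contribution is identical, and the $Y$-contribution becomes $(b)_{\mathrm{ann}}=\sum_L J(L)\bigl(u_\infty(L;\beta)-1\bigr)$; the identity $(a)+(b)_{\mathrm{ann}}=0$ (which one can also verify directly via $\tf_{\mathrm{ann}}(\rho_X,\rho_Y,\beta)=(1+\rho_Y)\tf(\beta/(1+\rho_Y))$ when $\rho_X=0$) forces $c''=\sum_L J(L) D(L)=D_*$. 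Since $\log u \sim u-1$ when $u\to 1$, your quenched $(b)=\sum_L J(L)\log u_\infty$ agrees with $(b)_{\mathrm{ann}}$ to leading order $\tf(\beta)^{2-1/\gamma}$. So the quantity you want to be strictly negative is actually $(a)+(b)=(b)-(b)_{\mathrm{ann}}=\sum_L J(L)[\log u_\infty - u_\infty +1]$, the Jensen gap, which vanishes at the scale $\tf(\beta)^{2-1/\gamma}$ and lives entirely in the sub-leading term. The dominant contribution to that gap comes from $|L|\sim\tf(\beta)^{-1/\gamma}$ (where $u_\infty$ transitions from $\approx 1$ to $\approx 0$ and $\log u - u +1 = O(1)$), and has size $\sum_{|L|\sim\tf^{-1/\gamma}}J(L)\cdot O(1)\sim\tf(\beta)$, not $\tf(\beta)^{2-1/\gamma}$. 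That computation — which you have not done — is what would actually yield $\partial_\rho\tf|_{\rho=0}\leq -c\tf(\beta)$. As presented, the heart of your argument is an inequality that an annealed bookkeeping identity shows to be an equality.

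There are further unaddressed issues even if the first-order coefficient were correctly identified: (i) you never justify differentiability of $\tf(\rho,\beta)$ in $\rho$, nor the interchange of $\partial_\rho$ with the $T\to\infty$ limit and the clean additive decomposition into $(a)+(b)$; (ii) the pointwise convergence $Z^{Y_{s,L},\cons}_{\beta,T}/z^{\cons}_{\beta,T}\to u_\infty(L;\beta)$ and the claimed $L$-asymptotics of $D(L)$ are asserted rather than proved; (iii) the ``extend uniformly for $\rho$ in a neighbourhood of $0$ and integrate'' step is not a small gap — since the derivative is far from constant in $\rho$ (indeed, a bound of the form $\partial_\rho\tf\leq -c\tf(\beta)^{2-1/\gamma}$ with $\tf^{2-1/\gamma}\gg\tf$ cannot hold uniformly in $\rho$ without violating $\tf(\rho,\beta)\geq 0$), this uniformity is essential and delicate. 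The paper's proof avoids all of this by working with a single finite-$T$ truncated moment rather than a derivative identity.
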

We also prove that the coincidence of the critical point $\beta_c(\rho)=\beta_0$, which holds for small enough~\(\rho\) thanks to~\eqref{iurl}, does not hold all the way up to $\rho=1$.
\begin{proposition}
  \label{prop:largerho}
  Assume that \eqref{simpleJPP} holds with $\gamma\in (\frac{2}{3},1)$.
  Then there exists $\rho_1 \in (0,1)$ such that $\beta_c(\rho)>\beta_0$ for any $\rho \in (\rho_1,1)$.
\end{proposition}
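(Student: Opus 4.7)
My plan is to prove the statement by the fractional moment method. The guiding observation is that the ``degenerate'' case $\rho=1$, in which $X$ would be constant in time, would give $Z^Y_{\beta,T}=e^{\beta L^Y_T(0)}$, which for any $\beta$ remains almost surely bounded in $T$ (since $Y$ is transient and the local time $L^Y_\infty(0)$ is a.s.\ finite). I would aim to show that this feature survives for $\rho<1$ sufficiently close to $1$ and for some $\beta$ slightly above $\beta_0$: a fractional moment $\bbE[(Z^Y_{\beta,T})^\theta]$, for suitable $\theta\in(0,1)$, should remain subexponential in $T$. By Jensen's inequality $\theta\,\bbE[\log Z^Y_{\beta,T}]\leq \log \bbE[(Z^Y_{\beta,T})^\theta]$, this would yield $\tf(\rho,\beta)\leq 0$, hence $\tf(\rho,\beta)=0$, so that $\beta_c(\rho)\geq \beta>\beta_0$. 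Note that this is a fundamentally different regime from the companion paper's irrelevance analysis: there the gain comes from the small intensity $\rho$, whereas here it comes from the scarcity of $X$-jumps.

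First I would decompose $Z^Y_{\beta,T}$ according to the number $N_X(T)\sim \mathrm{Poisson}((1-\rho)T)$ of jumps of $X$ in $[0,T]$, and apply the subadditivity $(a+b)^\theta\leq a^\theta+b^\theta$ valid for $\theta\in(0,1)$. The contribution of $\{N_X(T)=0\}$ (on which $X\equiv 0$) is $e^{-(1-\rho)T}\,e^{\beta L^Y_T(0)}$. Since $L^Y_\infty(0)$ is distributed as $\rho^{-1} L^W_\infty(0)$, itself a geometric sum of i.i.d.\ $\mathrm{Exp}(1)$ holding times with moment generating function finite at $\theta\beta$ provided $\theta\beta<\rho\beta_0$, the corresponding contribution to $\bbE[(Z^Y_{\beta,T})^\theta]$ is bounded by $e^{-\theta(1-\rho)T}\,\bbE[e^{\theta\beta L^Y_\infty(0)}]$, which decays exponentially in $T$. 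Choosing for instance $\beta$ slightly above $\beta_0$ and $\theta<\rho\beta_0/\beta$, this is admissible for $\rho$ close to~$1$.

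The main obstacle will be to control the contributions from $\{N_X(T)\geq 1\}$. For each $n\geq 1$ one has the bound $H^Y_T(X)\leq \sum_{i=0}^n L^Y_\infty(S_i)$, where $(S_i)_{i=0}^n$ is the discrete-time random walk formed by $X$'s positions at its jump times. The field $(L^Y_\infty(x))_{x\in\bbZ}$ is a nontrivially correlated collection of essentially-exponential variables, and a direct H\"older bound with exponents growing in $n$ fails for large $n$. I would therefore coarse-grain time at scale $\ell \sim (1-\rho)^{-1}$ (the typical inter-jump time of~$X$), so that each block contains on average $O(1)$ jumps of $X$, and within a block decompose $Y$'s trajectory into excursions away from the set of visited $X$-sites, using transience of~$Y$ to control the accumulation. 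The decisive quantity is then the block fractional moment $\bbE[(Z^Y_{\beta,\ell})^\theta]$, which one aims to push below~$1$ by suitable choice of $\theta$, of $\beta-\beta_0$, and of the block size.

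Combining the block estimate with the coarse-graining identity for the full system gives $\bbE[(Z^Y_{\beta,T})^\theta]\leq C^{T/\ell}$ with $C<1$, hence subexponential in $T$, so that $\tf(\rho,\beta)=0$ for some $\beta>\beta_0$ uniformly in $\rho\in(\rho_1,1)$ for $\rho_1$ close enough to~$1$. The most delicate point will be the block estimate in the ``several-jumps'' regime, where one must show that the exponential gain $e^{-\theta(1-\rho)\ell}$ from the no-jump probability is not overwhelmed by the multi-site accumulation of $Y$'s local time along $X$'s range; this is where the transience assumption $\gamma<1$, and the resulting finiteness of $L^Y_\infty(x)$ for every $x$, enter in an essential way.
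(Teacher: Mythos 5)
Your guiding intuition is sound and aligned with the paper's: for $\rho$ close to $1$, the model is a small perturbation of the degenerate $\rho=1$ case $Z^Y_{\beta,T}=e^{\beta L^Y_T(0)}$, where transience of $Y$ makes $L^Y_\infty(0)\sim\mathrm{Exp}(\rho p_\infty)$ with $p_\infty=\beta_0$, so that the fractional moment $\bbE[e^{\theta\beta L^Y_\infty(0)}]$ is finite as soon as $\theta\beta<\rho\beta_0$. This observation is precisely what drives both arguments, and indeed the paper's relevant change-of-measure event is $\cA=\{\max_{x}L^Y_T(x)\geq(\log T)^2\}$, whose probability is controlled in \Cref{lem:ProbaA} by exactly the exponential tail of $L^Y_\infty(0)$ that you identify.

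However, there is a genuine gap: you defer the whole difficulty to "pushing the block fractional moment $\bbE[(Z^Y_{\beta,\ell})^\theta]$ below one" but offer no mechanism for doing so, and the decomposition you propose does not supply one. Two specific issues. First, the coarse-graining scale. You take $\ell\sim(1-\rho)^{-1}$, chosen so that $X$ jumps $O(1)$ times per block; but the fractional-moment iteration needs the block size to match the annealed correlation length $T=1/\tf(\beta)$, because the quasi-multiplicativity \eqref{coarsegraining}--\eqref{coarsegraining2} and the $K$--$u$ iteration in \eqref{eq:iterationstep} operate on the renewal process $\tau$ attached to the contact set, not on $X$'s jump times. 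With the wrong scale the "$\bbE[(Z^Y_{\beta,T})^\theta]\leq C^{T/\ell}$" step you invoke is simply not available: $X$ (and the contact set) can span many blocks in a single excursion, and the cross-block terms have to be resummed through the renewal density $u(\cdot)$. Second, and more decisively, your plan to bound the multi-jump contributions via $H^Y_T(X)\leq\sum_i L^Y_\infty(S_i)$ followed by an excursion decomposition of $Y$ is exactly the point where a direct moment bound blows up, as you yourself note; there is no estimate given that replaces it. The paper resolves this via the change-of-measure/size-biasing ingredient: one inserts a penalty $g=\ind_{\cA^\cc}+\eta\ind_{\cA}$ in H\"older's inequality as in \eqref{eq:Holder}, which makes $\bbP(\cA)$ the only quantity one must bound on the $\bbP$ side, and then one shows via \Cref{lem:sizebiased} that under the weighted measure $\bbP_\tau$ (with $\rho\geq 1-C_0^{-1}T^{-1}$) the walk $Y$ is pinned at $0$ at essentially every renewal point (the bound \eqref{pinned}), so that $L_T^Y(0)$ is large, i.e.\ $\cA$ is typical under size-biasing. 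This size-biasing lemma is the missing idea; without some substitute for it, the block estimate does not close, and the argument remains a plan rather than a proof.
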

Lastly we show another property to highlight the impact of disorder in the irrelevant regime. 
We show that the normalized point-to-point partition function, at the annealed critical point, goes to~\(0\).
Again, this is in contrast to what happens for the disordered pinning model in the irrelevant disorder regime, see e.g.~\cite{Lac10ecp} (in fact, for the pinning or directed polymer model, the partition function at the annealed critical point vanishes if and only if disorder is relevant).

\begin{proposition}
  \label{prop:partition}
  Assume that \eqref{simpleJPP} holds with $\gamma\in (\frac{2}{3},1)$. 
  Then, there exists a constant \(c>0\) such that, for any \(\rho\in (0,1)\),
  \begin{equation*}
 \lim_{T\to \infty} \bbP\bigg[  \frac{Z^{Y,\cons}_{\beta_0,T}}{z^{\cons}_{\beta_0,T}}\ge T^{-c\rho}\bigg]=0 \,.
  \end{equation*}
\end{proposition}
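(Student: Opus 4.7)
Set $M_T := Z^{Y,\cons}_{\beta_0,T}/z^{\cons}_{\beta_0,T}$, which satisfies $\bbE[M_T]=1$ by the annealed identity $\bbE[Z^{Y,\cons}_{\beta_0,T}]=z^{\cons}_{\beta_0,T}$ noted after \Cref{homener}. Markov's inequality applied to $M_T^\theta$ with $\theta\in(0,1)$ gives
\[
\bbP\Big(M_T \ge T^{-c\rho}\Big) \le T^{c\rho\theta}\,\bbE[M_T^\theta],
\]
so it suffices to establish a fractional moment bound $\bbE[M_T^\theta]\le T^{-\delta}$ with $\delta>c\rho\theta$ for a suitably chosen $\theta\in(0,1)$. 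Note that only the trivial bound $\bbE[M_T^\theta]\le 1$ follows from Jensen, so genuine probabilistic input is needed.

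\textbf{Coarse-graining and change of measure.}
I would follow the standard coarse-graining plus change-of-measure scheme used throughout the paper. First, partition $[0,T]$ into $N=T/T_0$ blocks of length $T_0$, chosen as a suitable positive power of $T$. Decomposing $Z^{Y,\cons}_{\beta_0,T}$ according to the coarse-grained blocks where the difference walk $W=X-Y$ visits the origin, one rewrites it as a sum over coarse-grained contact patterns; the elementary inequality $(\sum_i a_i)^\theta\le\sum_i a_i^\theta$, valid for $\theta\in(0,1)$, then reduces matters to bounding fractional moments of block-wise partition functions. On each block, I would apply a tilt to the law of $Y$ that disfavors configurations producing large pinning contributions. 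Informed by the size-biasing analysis already used in the paper, a natural choice is to bias $Y$ towards performing more (or larger) jumps than typical on the block, since the probability that $X$ follows a jump of size $x$ decays polynomially as $J(x)\sim|x|^{-(1+\gamma)}$.

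\textbf{Main obstacle.}
The delicate step is calibrating the tilt so that (i) its $L^{\theta/(1-\theta)}$-cost with respect to $\bbP$ on each block remains of constant order, and (ii) the tilted expectation of the block partition function is smaller than the untilted one by a polynomial factor in $T_0$ with exponent proportional to $\rho$, reflecting the linear scaling of $Y$'s jump intensity. The quantitative estimates rely on the local limit theorem giving $\P(W_t=0)\sim c\,t^{-1/\gamma}$, together with the critical renewal asymptotics for $z^{\cons}_{\beta_0,T}$ provided by the analysis leading to \Cref{homener} (with $\nu=1$ throughout the regime $\gamma\in(\frac23,1)$). Optimizing over $\theta\in(0,1)$ and the block size $T_0$ then converts the per-block polynomial gain into a global bound $\bbE[M_T^\theta]\le T^{-\delta}$ with $\delta/\theta$ linear in $\rho$, which yields the announced threshold $T^{-c\rho}$ for a universal constant $c>0$. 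The subtle point is that, unlike the disordered pinning model in its irrelevant regime where $M_T$ is $L^2$-bounded, here the second moment of $M_T$ grows polynomially in $T$, so the margin available is narrow and the tilt must be tuned finely to extract a gain of the right order in $\rho$.
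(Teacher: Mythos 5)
Your overall framework (fractional moments $+$ coarse-graining $+$ per-block tilt) diverges substantially from what the paper actually does here, and the divergence creates a real gap.

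The paper proves \Cref{prop:partition} without any coarse-graining, via the elementary inequality of \Cref{lem:StefHub}: for a well-chosen event $\cA\in\cF_T$,
\begin{equation*}
  \bbE\big[1\wedge\cW^Y_{\beta_0,T}\big]\le\bbP(\cA)+\bbE\big[\cW^Y_{\beta_0,T}\ind_{\cA^\cc}\big]
  =\bbP(\cA)+\bQ_{\beta_0,T}\big[\bbP_\tau(\cA^\cc)\big],
\end{equation*}
and then Markov. The whole burden is the choice of $\cA$. The mechanism producing the bound $T^{-c\rho}$ is quite specific: the event is defined through $F_{(0,T]}=\sum_i\ind_{\{U_iK(\vartheta_i)\ge\beta_0\}}$, which counts jumps of $Y$ above a \emph{time-dependent} threshold $\beta_0/K(\vartheta_i)$ (small jumps at early times count, only large jumps at late times count). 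By Mecke's formula, $F_{(0,T]}$ is a Poisson variable with mean $\asymp\rho\log T$, and the $T^{-c\rho}$ decay is then nothing but a Poisson large-deviation estimate at scale $\rho\log T$; the size-biasing analysis (\Cref{lem:sizebiased}, \Cref{lem:domination}) ensures that the corresponding lower-deviation event is typical under $\bbP_\tau$. That logarithmic mean, arising from the $t$-dependent cutoff, is the central quantitative ingredient of the proof.

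Your proposal does not identify this ingredient, and I do not see how your coarse-graining scheme replaces it. In the paper, coarse-graining (Section~\ref{sec:method}) is deliberately calibrated at $T=\tf(\beta)^{-1}$ for $\beta>\beta_0$ and is engineered to produce a bound that is \emph{uniform} in the system size (hence $\tf=0$), via a geometric series in the number of visited blocks; it is not set up to produce a power-law decay, and it degenerates at $\beta=\beta_0$ where $\tf(\beta)^{-1}=\infty$. Your claimed per-block gain $T_0^{-c\rho}$ would need exactly the logarithmic-Poisson mechanism above to be established, so nothing is gained by interposing blocks; and the $\ell=1$ or $\ell=2$ terms in the block sum already force $T_0\asymp T$, collapsing the scheme to a single change of measure — which is what the paper does, only more cleanly with a truncated moment rather than a fractional one. (Your remark that the second moment of the normalised partition function grows polynomially, unlike for the pinning model, is correct and is indeed what makes a second-moment shortcut unavailable.) In short: the overall change-of-measure philosophy is right, but the event and the scaling that actually yield $T^{-c\rho}$ are missing, and the coarse-graining layer you propose does not supply them.
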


%
%

\subsection{Comparison with the disordered pinning model}

The results of the present article, combined with~\cite{BLirrel}, give a complete picture regarding the question of disorder relevance for the \(\gamma\)-stable Random Walk Pinning Model.
Let us briefly comment on how our results compare to those obtained for the usual disordered pinning model; we refer to \cite[Section~2.3]{BLirrel} for a more detailed discussion.  

The disordered pinning model is defined as a (discrete-time) renewal process interacting with a defect line with i.i.d.\ pinning potentials, for which the question of disorder relevance has been extensively studied, see~\cite{Gia07,Gia10} for a general overview.
(Let us note that the annealed version of the disordered pinning model coincides with the annealed version of the RWPM.)
In a nutshell, if~\(\nu\) is the critical exponent of the homogeneous free energy (see \Cref{homener}), disorder has been shown to be irrelevant if \(\nu >2\) and relevant if \(\nu <2\).
The results obtained here and in \cite{BLirrel} draw a similar picture for the RWPM. 
 In fact, the critical point shift found when \(\nu>2\) (see \cite[Theorem~2.3]{BLirrel} and \Cref{rele1} above) is of comparable amplitude for both models.
However, there are a couple of important differences between the two models which are highlighted by the results obtained in the present paper.

When \(\nu>2\) (irrelevant disorder regime) the disordered pinning model's free energy displays the same asymptotic behavior at zero as its homogeneous counterpart, see \cite{Ale08,GT09,Ton08a}, and the behavior of the model at criticality is also similar to that of the critical homogeneous model \cite{Lac10ecp}.
For the RWPM however, disorder still has a non-trivial effect both on the free energy curve (cf. \Cref{prop:Firrel}) and on the behavior at criticality (cf. \Cref{prop:partition}).

The difference is even more striking in the marginal case \(\nu=2\).
Indeed, in the \(\nu=2\) case, disorder may be either irrelevant \cite{Ale08,Lac10ecp,Ton08a} or relevant \cite{BL18,GLT10,GLT11} for the disordered pinning model, depending on the fine details of the model, \textit{i.e.}\ on the slowly varying function~\(\varphi\).
As shown in \Cref{rele2}, this is not the case for the RWPM: when \(\nu=2\) disorder is \emph{always relevant}, \textit{i.e.}\ no matter what the slowly varying function~\(\varphi\) is.

The main feature that explains these differences in behavior is the nature of the disorder: in the RWPM, a given jump of the random walk $Y$ has long range effects in the Hamiltonian $H^Y_T(X)$ making \textit{de facto} $(Y_t)_{t\in [0,T]}$ a disorder with a correlated structure (in spite of having independent increments). Besides the differences in behavior noted above, these correlations also make the study of model mathematically more challenging.

\subsection{Some comments on the proof and organisation of the rest of the article}

All of our proofs rely on giving upper bounds on either truncated moments or fractional moments of the partition function.
To obtain these bounds, our first idea is to find  an event \(\cA\) of small probability but which gives an overwhelmingly large contribution to the expectation of $Z_{\beta,T}^{Y}$. We require thus both $\bbP(A)$ and
 \(\tilde \bbP_T(\cA):=\bbE[Z_{\beta,T}^{Y}\ind_{\cA^{\cc}}] / \bbE[Z_{\beta,T}^{Y}]\), called \textit{size-biased probability} of \(\cA^{\cc}\), to be small.
While this is now a standard approach, the main difficulty remains to identify such an event and to prove the desired estimates on the above mentioned probabilities.
From a technical point of view, there are two important ingredients that we use:
\begin{enumerate}
  \item Following \cite{BS10}, we rewrite the partition function as that of a weighted renewal process \(\tau\) (see e.g.~\eqref{goodexpress}), where the weights depends on the increments of \(Y\) on \(\tau\)-intervals.
  This allows us to obtain an intuitive description the size-biased probability, see \Cref{lem:sizebiased}.

  \item  This description of the size biased measure allows in particular to identify one key feature measure. Under $\tilde \bbP_T$, the random walk $Y$ tends to jump less than under the original measure. The mathematically rigorous version of this statement takes the form of a stochastic comparison between the set of jumps under $\bbP$ and $\tilde \bbP_T$ respectively, see \Cref{lem:domination}. 
 The validity of this statement relies on the fact that  \(J(x)\) is non-decreasing in \(|x|\), and its proof
  is based on an unusual Poisson construction of the random walk \(Y\).
\end{enumerate}

Combining these two ingredients we obtain an intuition on the effect of the size biasing on the distribution of $Y$, and this allow us to construct events \(\cA\) suited for the proof of each of the result, and in particular estimate their (size-biased) probability.
While the choice of \(\cA\) depends on the result one wants to prove, it will (most of the time) be based on some statistics counting (large or small) jumps in the Poisson construction, and our task is to understand which range of jump is most affected by the size biasing. Let us underline that \Cref{lem:domination} plays a crucial role in simplifying the computations. 
The stochastic comparison allows us to discard many terms in our first and second moment computations, allowing for a more readable presentation. 
On the other hand let us insist on the fact that this is mostly a practical simplification and plays no role in the heuristic reasoning behind the proof. We believe that our result would still hold without the assumption of monotonicity for $J(\cdot)$, but their proof should require much  heavier computations.

In the context of \Cref{prop:Firrel,prop:partition}, a direct use of the change of measure/size biasing strategy described above is sufficient for our purpose.
On the other hand, in the context of \Cref{prop:largerho}, \Cref{rele1} and Theorem \ref{rele2}, we need to combine it with a (well-established) coarse-graining technique (as in \cite{BT10,BS11}). 
The idea is to break the system into cells whose size is of order $\tf(\beta)^{-1}$ and apply the change of measure/size biasing method to estimate the contribution of each cell to the fractional moment of~$Z^Y_{\beta,T}$.
This allows to take advantage of the \textit{quasi-multiplicative structure} of the fractional moment and state a finite-volume criterion for having \(\tf(\beta,\rho)=0\) (hence \(\beta_c(\rho)\geq \beta\)).
This general framework is identical for the three results, and the choice of event \(\cA\) will differ in all three cases.
Let us stress that for \Cref{rele1}, \(\cA\) will be based on a simple count of jumps of \(Y\).
On the other hand, in the marginal case of \Cref{rele2}, the choice of \(\cA\) is much more involved: it relies on some statistics that counts jumps of \(Y\) with a (very specific) weight that depends on their amplitude, the weight being chosen in such a way that somehow all scales of jumps contribute to the statistics.

\smallskip
Let us now briefly review how the rest of the paper is organized.

\begin{itemize}
  \item In \Cref{sec:prelim}, we present the preliminary properties mentioned above: the rewriting of the partition function, monotonicity properties and Poisson construction of the walk, the interpretation of the size-biased probability (\Cref{lem:sizebiased}) and the stochastic comparison result (\Cref{lem:domination}).
  \item  In Section~\ref{sec:upper}, we prove \Cref{prop:Firrel,prop:partition}, via a simple change of measure argument; it allows in particular to use \Cref{lem:sizebiased} and \Cref{lem:domination} in a simpler context.
  \item In \Cref{sec:method}, we present the general fractional moment/coarse-graining/change of measure procedure, whose goal is to obtain a finite-volume criterion for having \(\tf(\rho,\beta)=0\) for some \(\beta >\beta_0\). 
  This is the common framework for the proofs of \Cref{rele1,rele2} and \Cref{prop:largerho}. 
  \item In \Cref{sec:proofI,sec<2/3,sec:proofII}, we complete the proofs of \Cref{prop:largerho} and \Cref{rele1,rele2} respectively.
  In all cases, we provide the correct change of measure event \(\cA\) and compute all the needed estimates.
\end{itemize}

\section{Preliminary observations and useful tools}
\label{sec:prelim}

\subsection{Rewriting of the partition function}
\label{sec:rewrite}

The first main step is to rewrite the partition function, as done initially in~\cite{BS10} and repeatedly used in the study of the RWPM.
Expanding the exponential $\exp(\int_0^T \ind_{\{X_t=Y_t\}} \dd t)$ appearing in the partition function~\eqref{def:gibbs} and using the Markov property for $X$, we get that
\begin{align*}
Z_{\gb,T}^Y   = 1+ \sum_{k=1}^{\infty} \gb^k \int_{\cX_k(T)} \prod_{i=1}^k\bP\big( X_{t_i-t_{i-1}}  = Y_{t_i} -Y_{t_{i-1}} \big) \dd t_1 \cdots \dd t_k \,,
\end{align*}
where $\cX_k(T):=\{ {\bf t}\in \bbR^k : 0< t_1< t_2<\dots<t_k<T\}$ is the $k$-th dimensional simplex (by convention \(t_0=0\)). 
Noticing that $\bbP\otimes \bP \big( X_{t_i-t_{i-1}}  = Y_{t_i} -Y_{t_{i-1}} \big) = \P(W_t=0)$, we renormalize this function by its total mass (recall the definition~\eqref{def:beta0} of \(\beta_0\)) by setting 
\begin{equation}
  \begin{split}
  \label{def:KKw}
  K_w(s,t,Y)& :=\beta_0\bP\left(X_{t-s}=Y_t-Y_s \right)  \,, \\ 
  K(t)&:=\bbE\left[ K(0,t,Y)\right]=\beta_0 \P(W_t=0) \,.
  \end{split}
\end{equation}
In particular, \(K(t)\) verifies \(\int_0^{\infty} K(t)\dd t =1\).
Plugged in the above expansion for \(Z^Y_{\beta,T}\) and using the same type of expansion for \(Z^{Y,\cons}_{\beta,T}\), we obtain (setting by convention $t_0=0$ and $t_{k+1} =T$)
\begin{equation}
\label{goodexpress}
  \begin{split}
  Z^{Y}_{\beta,T}&=  1+\sum^{\infty}_{k=1} \left(\beta/\beta_0 \right)^{k} \int_{\cX_k(T)} \prod_{i=1}^{k}  K_w(t_{i-1},t_i,Y)  \dd t_j \,,\\
  Z^{Y,\cons}_{\beta,T}&=  \frac{\beta}{\beta_0} K_w(0,T,Y)+\sum^{\infty}_{k=1} \left(\beta/\beta_0 \right)^{k+1} \int_{\cX_k(T)} \prod_{i=1}^{k+1}  K_w(t_{i-1},t_i,Y)  \prod_{j=1}^{k} \dd t_j \,.
  \end{split}
\end{equation}
For the homogeneous model, analogously (or simply using that \(z_{\gb,T}^{\cons} = \bbE[Z^{Y,\cons}_{\beta,T}]\)) we have
\begin{equation}
  \label{goodexpress-hom}
   z_{\gb,T}^{\cons} = \frac{\beta}{\beta_0}K(T) + \sum_{k=1}^{\infty}  (\gb/\gb_0)^{k+1} \int_{\cX_k(T)} \prod_{i=1}^{k+1} K(t_i-t_{i-1}) \prod_{i=1}^k \dd t_i \,.
\end{equation}

\subsection{A continuous-time renewal process and associated pinning model}
\label{sec:annealed}

Consider \(\tau\) a continuous time renewal process with inter-arrival distribution with density \(K(t)\), \textit{i.e.}\ $\tau_0=0$ and $(\tau_{i}-\tau_{i-1})_{i\geq 1}$ are i.i.d.\ with density $K$.
We denote its law by \(\bQ\).
We let $u(\cdot)$ be the renewal density, defined on \((0,\infty)\) by $\int_A u(t)\dd t:= \bQ(|A\cap \tau|)$.
Then, the renewal equation yields
\[
u(T)=K(T) + \sum_{k=1}^{\infty}  \int_{\cX_k(T)} \prod_{i=1}^{k+1} K(t_i-t_{i-1}) \prod_{i=1}^k \dd t_i =z^{\cons}_{\beta_0,t} \,.
\]
Note that for \(\beta\neq \beta_0\), we can also interpret \(z_{\beta,T}^{\cons}\) in terms of a partition function of a pinning model based on the renewal process \(\tau\): from~\eqref{goodexpress-hom}, we have
\begin{equation}
  \label{eq:zpinning}
 z_{\beta,T}^{\cons} = u(T) \bQ\Big[ ( \gb/\gb_0)^{\cN_T} \ \Big| \ T\in \tau\Big] \,,
\end{equation}
where  \(\bQ( \cdot  \ | T \in \tau)=\lim_{\gep \to 0} \bQ( \cdot \mid \tau \cap [T,t+\gep]\neq \emptyset)\) and~\(\cN_T = \max\{k, \tau_k\leq T\} = |\tau\cap [0,T]|\). 
Then, an easy consequence of \cite[Lemma 3.1]{BLirrel} is the following, for any \(A>0\), there exists a constant \(C=C_A\) such that, for any \(\beta \in [\beta_0,2\beta_0]\)
\begin{equation}
  \label{eq:homogeneous}
\forall\, T\leq \frac{A}{\tf(\beta)}\,,  \qquad  C_A^{-1} u(T) \leq  z_{\beta,T}^{\cons} \leq C_A u(T) \,.
\end{equation}
An important point is that our assumption \eqref{JPP} implies that $K(t)$ and $u(t)$ are also regularly varying when $t\to \infty$.
Indeed, recalling that \(K(t) = \beta_0 \P(W_t=0)\), the local limit theorem (see e.g.\ \cite[Ch.~9]{GK68}) implies that $K(t)$ verifies the following asymptotic relation
\begin{equation}
  \label{implicit}
  \varphi\big( 1/K(t)\big) K(t)^{\gamma} \stackrel{t\to \infty}{\sim} \frac{c_{\gamma}}{t}  \,,
\end{equation}
for some explicit constant \(c_{\gamma}>0\) (we also refer to \cite[App.~C]{BLirrel} for details).
In particular, we deduce that there exists a slowly varying function \(L(\cdot)\) such that \(K(t)\) is of the form
\begin{equation}\label{formkt}
 K(t)=  L(t)t^{-(1+\alpha)}  \qquad \text{ with } \alpha = \frac{1-\gamma}{\gamma} \in (0,+\infty) \,.
\end{equation}
We also have
\(
\overline{K}(t) :=\int_t^{\infty} K(s) \dd s \stackrel{t\to \infty}{\sim}\alpha^{-1} L(t)t^{-\alpha}.
\)
Note also that the slowly varying function~\(L(\cdot)\) is asymptotically constant in the case where \(\varphi(\cdot)\) is asymptotically constant.
Concerning $u(t)$, when $\alpha>1$, the continuous-time renewal theorem yields 
\begin{equation}\label{renewalr}
\lim_{t\to \infty} u(t)= \left(\int^\infty_0 sK(s) \dd t\right)^{-1}.
\end{equation}
When $\alpha\in (0,1)$ \cite[Lem.~A.1]{BS11} (see also Topchii~\cite[Thm.~8.3]{Top10}) shows a continuous-time version of Doney's local limit theorem for renewal processes with infinite mean~\cite{D97}: we then have
\begin{equation}
\label{renewal}
u(t) \stackrel{t\to \infty}{\sim} \frac{\alpha \sin(\pi\alpha)}{\pi} \frac{1}{t^2 K(t)} = \frac{\alpha\sin(\pi\alpha)}{\pi}  \frac{\ t^{\alpha-1}}{L(t)}.
\end{equation}

\subsection{Some important properties of the random walk}
\label{sec:propertiesRW}

Let us now present two properties that will be used repeatedly in the article, that both rely on the fact that the function \(J(\cdot)\) is non-increasing in \(|x|\).
The first one is a unimodality and stochastic monotonicity property and the second one is some unusual Poisson construction of the walk which will allow us to compare its law with a size-biased version of it (introduced in Section~\ref{sec:sizebiased} below).

\subsubsection{Unimodality and stochastic monotonicity}

A positive finite measure $\mu$ on $\bbZ$ is said to be \textit{unimodal} if for all $x\in \lint a,b\rint:=[a,b]\cap \bbZ$ we have 
\begin{equation*}
  \mu(x)\ge \min (\mu(a),\mu(b)) \,,
\end{equation*}
where we write $\mu(x)$ for $\mu(\{x\})$ for convenience. 
Additionally, $\mu$ is \textit{symmetric} if $\mu(-x)=\mu(x)$ for every~$x$.   
Obviously positive linear combination of symmetric unimodal measures are symmetric unimodal.
In the paper we make use of the following statement (see e.g. \cite[Problem 26, pp.169]{FellerII} for a continuous version and its proof).

\begin{lemma}
  \label{stablemma}
  The convolution of two symmetric unimodal measures is symmetric unimodal.
\end{lemma}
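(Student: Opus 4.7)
The plan is to reduce the statement to the unimodality of a simple building block via a ``layer cake'' representation of symmetric unimodal measures, rather than trying to argue about convolutions of general symmetric unimodal measures directly.

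First I would dispose of symmetry, which is cheap: if $\mu,\nu$ are symmetric, then the substitution $y \mapsto -y$ in $(\mu*\nu)(-x)=\sum_y \mu(y)\nu(-x-y)$, combined with $\mu(-y)=\mu(y)$ and $\nu(y-x)=\nu(x-y)$, yields $(\mu*\nu)(-x)=(\mu*\nu)(x)$. The whole content of the lemma is therefore the unimodality statement.

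Next I would establish the key structural decomposition: any symmetric unimodal measure $\mu$ on $\bbZ$ can be written as a non-negative linear combination
\[
\mu = \sum_{n\geq 0} c_n\, \ind_{\lint -n,n\rint}, \qquad c_n := \mu(n)-\mu(n+1) \geq 0,
\]
since $\sum_{n\geq |x|} c_n = \mu(|x|) = \mu(x)$ by telescoping (using $\mu(n)\to 0$). Writing $\nu$ analogously, bilinearity of convolution gives
\[
\mu*\nu = \sum_{n,m\geq 0} c_n d_m\, \bigl(\ind_{\lint -n,n\rint} * \ind_{\lint -m,m\rint}\bigr),
\]
a non-negative combination of symmetric measures; so it suffices to check that each building block $\ind_{\lint -n,n\rint}*\ind_{\lint -m,m\rint}$ is symmetric unimodal.

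For that final step, a direct computation of the convolution
\[
\bigl(\ind_{\lint -n,n\rint}*\ind_{\lint -m,m\rint}\bigr)(x) = \bigl|\lint \max(-n,x-m),\min(n,x+m)\rint\bigr|
\]
gives an explicit ``trapezoid'': the value is $2\min(n,m)+1$ for $|x|\leq |n-m|$, then decreases linearly as $n+m+1-|x|$ for $|n-m|\leq |x|\leq n+m$, and vanishes outside. Symmetry and monotonicity of the map $|x|\mapsto \text{value}$ are manifest, so unimodality follows immediately.

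I do not expect a genuine obstacle here: the main thing to verify carefully is the layer-cake representation (including the fact that $\mu(n)\to 0$ for any finite symmetric measure, so telescoping is legitimate); everything else is a one-line computation. A small caveat worth mentioning is that, even though the paper quotes the continuous analogue in Feller, the discrete version is in fact easier because one can reason coordinatewise and no measurability/limit issues arise.
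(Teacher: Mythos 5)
Your proof is correct and complete. Note however that the paper does not actually supply a proof of this lemma: it merely cites Feller (\cite[Problem~26, p.~169]{FellerII}) for the \emph{continuous} analogue. What you have written is precisely the self-contained discrete argument that the paper leaves implicit, namely the layer-cake (Khintchine-type) representation $\mu = \sum_{n\geq 0} c_n\,\ind_{\lint -n,n\rint}$ with $c_n=\mu(n)-\mu(n+1)\geq 0$, bilinearity and Tonelli to reduce to the building blocks $\ind_{\lint -n,n\rint}*\ind_{\lint -m,m\rint}$, and then the explicit trapezoid computation. This is the standard route and it works; the only points worth making explicit are (a) symmetry plus unimodality is equivalent to $\mu(x)$ being non-increasing in $|x|$ (which you use tacitly when asserting $c_n\geq 0$ and when concluding the trapezoid is unimodal), (b) $\mu(n)\to 0$ because $\mu$ is a \emph{finite} measure, which legitimises the telescoping, and (c) the interchange of the double sum with convolution is justified by non-negativity. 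All of these are elementary, and the paper itself already records the fact that non-negative combinations of symmetric unimodal measures are symmetric unimodal, so the reduction you invoke is consistent with the surrounding text. In short: your argument is sound, it is more explicit than what the paper offers, and as you observe the discrete case is cleaner than the continuous one since no measurability issues arise.
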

We  use unimodality as a tool for comparison arguments.
Given $\mu_1$ and $\mu_2$ two symmetric measures we say that $\mu_2$ \textit{stochastically dominates} $\mu_1$, and we write \(\mu_1 \preccurlyeq \mu_2 \), if
\[
\forall k \geq 0\qquad  \mu_1(\lint-k,k\rint)\le \mu_2(\lint-k,k\rint) \,.
\]
When $\mu_1$ and $\mu_2$ are probability measures, this is equivalent to the existence of a coupling of $\xi_1\sim \mu_1$ and $\xi_2\sim \mu_2$ such that $|\xi_1|\le |\xi_2|$ almost surely. 
The following lemma, which is an easy exercise, states that convoluting a symmetric unimodal measure with a symmetric probability stochastically increases the measure.

\begin{lemma}
  \label{monotolemma}
  If \(\mu\) is a symmetric unimodal measure and $f$ a symmetric probability then
  \[
  \mu \preccurlyeq f \ast \mu \,.
  \]
\end{lemma}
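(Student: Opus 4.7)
The plan is to reduce the stochastic ordering statement to an arithmetic inequality about shifted intervals. By the definition of $\preccurlyeq$ (and its coupling interpretation via $|\xi_1| \leq |\xi_2|$), it suffices to prove that for every $k \geq 0$,
\[
(f \ast \mu)(\lint -k, k \rint) \leq \mu(\lint -k, k \rint).
\]
Expanding the convolution and exchanging sums yields
\[
(f \ast \mu)(\lint -k, k \rint) = \sum_{y \in \bbZ} f(y)\, \mu(\lint -k - y, k - y \rint),
\]
so, since $f$ is a probability with $\sum_y f(y) = 1$, the whole lemma reduces to the pointwise bound $\phi(y) \leq \phi(0)$ for every $y \in \bbZ$, where
\[
\phi(y) := \mu(\lint -k-y, k-y \rint).
\]

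The bound $\phi(y) \leq \phi(0)$ will be proved in two easy steps. First, I will extract from the definitions that a symmetric unimodal measure $\mu$ satisfies $\mu(x) \geq \mu(y)$ whenever $|x| \leq |y|$: indeed, $x \in \lint -|y|, |y| \rint$, and unimodality combined with symmetry gives $\mu(x) \geq \min(\mu(-|y|), \mu(|y|)) = \mu(y)$. Second, since $\mu$ is symmetric the function $\phi$ is itself symmetric, $\phi(-y) = \phi(y)$, as follows directly from applying $z \mapsto -z$ inside the interval defining $\phi$. It therefore suffices to show that $\phi$ is non-increasing on $\{y \geq 0\}$, and a telescoping computation gives
\[
\phi(y+1) - \phi(y) = \mu(-k-y-1) - \mu(k-y).
\]
A short arithmetic check distinguishing $0 \leq y \leq k$ from $y > k$ yields $|-k-y-1| = k+y+1 \geq |k-y|$, so the monotonicity of $\mu$ in $|x|$ implies $\phi(y+1) \leq \phi(y)$, as desired.

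Combining these pieces yields $\phi(y) \leq \phi(0)$ for every $y \in \bbZ$, and summing against $f$ concludes: $(f \ast \mu)(\lint -k, k \rint) = \sum_y f(y)\, \phi(y) \leq \phi(0) = \mu(\lint -k, k \rint)$. There is no genuine obstacle in this argument; morally, it says that for a symmetric unimodal reference measure the centered interval carries the largest mass among all translates of a given length, and the convolution is just a weighted average of such translates. The only slightly fiddly point is the case analysis for the sign of $\phi(y+1) - \phi(y)$, which uses nothing beyond symmetry and the monotonicity of $\mu$ in $|x|$ together with the fact that $f$ is a probability.
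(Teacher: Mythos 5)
Your proof is correct. The paper explicitly omits a proof of this lemma, calling it ``an easy exercise,'' so there is nothing to compare against; your argument is the canonical one. After expanding the convolution and exchanging sums, the claim reduces to showing that a symmetric unimodal measure gives maximal mass to the centered window $\lint-k,k\rint$ among all its translates $\lint -k-y, k-y\rint$, which you verify via the telescoping identity $\phi(y+1)-\phi(y) = \mu(-k-y-1)-\mu(k-y)$ together with symmetry of $\phi$ and the monotonicity of $\mu$ in $|x|$ (itself an immediate consequence of symmetry and unimodality, as you note). All steps check out.

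One remark, not a gap in your argument but a slip in the paper that you implicitly corrected. The paper's displayed definition of $\mu_1 \preccurlyeq \mu_2$ reads $\mu_1(\lint-k,k\rint) \le \mu_2(\lint-k,k\rint)$, but this is inconsistent with the coupling characterization ``$|\xi_1| \le |\xi_2|$ a.s.'' stated immediately afterwards (that coupling forces $\mu_2(\lint-k,k\rint) \le \mu_1(\lint-k,k\rint)$), and also with the way the lemma is used later, for instance to conclude that $t \mapsto \P(W_t = 0)$ is non-increasing. You proved $(f\ast\mu)(\lint-k,k\rint) \le \mu(\lint-k,k\rint)$, which is the direction matching the coupling characterization and the intended applications, so your reading of the definition is the right one.
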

Now let us give a couple of consequences for the random walk $(W_t)_{t\geq 0}$.
Our assumption stipulates that \(J(\cdot)\) is a symmetric and unimodal probability, so we obtain that the distribution of $W_t$ (which is a convex combination of $J^{\ast k}$) is symmetric and unimodal as well, for any $t\geq 0$.
\Cref{monotolemma} further implies that the distribution of $W_t$ is stochastically monotone in $t$.
We collect this in the following lemma.

\begin{lemma}
  \label{lem:unimod}
  For all $t>0$, we have
  \begin{equation*}
  \label{eq:unimodalX}
  |x|\le |y| \quad \Rightarrow \quad   \P(W_t =x) \ge \P(W_t=y) \,.
  \end{equation*}
  Additionally, the law of \(W_t\) is stochastically non-increasing in \(t\), in particular, \(t\mapsto \P(W_t=0)\) is non-increasing.
\end{lemma}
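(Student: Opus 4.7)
The plan is to decompose the lemma into its two assertions and handle each by a direct reduction to the abstract monotonicity results \Cref{stablemma,monotolemma}, using the standard Poisson construction of the continuous-time random walk \(W\).

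For the first assertion (unimodality of the distribution of \(W_t\)), I would use the representation
\[
\P(W_t = x) = \sum_{k=0}^{\infty} e^{-t}\frac{t^k}{k!}\, J^{*k}(x),
\]
which comes from writing \(W_t\) as a compound Poisson random variable with jump law \(J\). By assumption, \(J\) is a symmetric probability measure on \(\bbZ\) that is non-increasing in \(|x|\), hence symmetric and unimodal in the sense defined just before \Cref{stablemma}. An immediate induction on \(k\) using \Cref{stablemma} shows that \(J^{*k}\) is symmetric unimodal for every \(k\geq 1\) (and the \(k=0\) term \(\delta_0\) is trivially so). Since positive linear combinations of symmetric unimodal measures are again symmetric unimodal (as remarked in the text), the law of \(W_t\) is symmetric unimodal, which is precisely the displayed implication \(|x|\le|y|\Rightarrow \P(W_t=x)\ge \P(W_t=y)\).

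For the second assertion (stochastic monotonicity in \(t\)), I would invoke the Markov property: for any \(0 \le s \le t\), we may write \(W_t \stackrel{(d)}{=} W_s + W'_{t-s}\) with \(W'\) an independent copy of \(W\). Thus the law of \(W_t\) is the convolution of the law of \(W_s\) (which is symmetric unimodal by the first part) with the symmetric probability measure that is the law of \(W'_{t-s}\). Applying \Cref{monotolemma} with \(\mu=\) law of \(W_s\) and \(f=\) law of \(W'_{t-s}\) yields \(\mu_{W_s}\preccurlyeq \mu_{W_t}\), which is the desired stochastic monotonicity. Specializing the interval-mass inequality to \(k=0\) gives \(\P(W_t=0)\le \P(W_s=0)\), that is, \(t\mapsto \P(W_t=0)\) is non-increasing.

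I do not anticipate any substantive obstacle: once one recognizes that the Poisson construction expresses \(\mu_{W_t}\) as a convex combination of iterated convolutions of \(J\), and uses the Markov/convolution decomposition \(\mu_{W_t}=\mu_{W_s}\ast \mu_{W_{t-s}}\), both statements reduce to a one-line application of \Cref{stablemma} and \Cref{monotolemma} respectively. The only point to double-check is the innocuous case \(s=0\) (where \(\mu_{W_0}=\delta_0\) is trivially unimodal and the stochastic comparison is immediate) and the fact that the unimodality result needed to feed into \Cref{monotolemma} is available at the time \(s\) rather than being assumed — but this is exactly what the first part provides.
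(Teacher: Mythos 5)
Your proof is correct and follows essentially the same route as the paper: the first assertion is obtained by expressing the law of \(W_t\) as a convex combination of \(J^{*k}\) and invoking \Cref{stablemma}, and the second by the convolution decomposition \(W_t\stackrel{(d)}{=}W_s+W'_{t-s}\) together with \Cref{monotolemma}. Your write-up is merely a more explicit rendering of the one-paragraph justification the paper gives just before the lemma.
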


\subsubsection{An unusual Poisson construction of the random walk}

The usual construction for a continuous-time random walk with jump rate \(\rho\) consists in adding jumps distributed according to \(J(\cdot)\) at times of a Poisson point process of intensity \(\rho\).
We present instead a different construction that contains extra information in the Poisson Point Process that we use to derive stochastic comparisons.
Let us define a finite measure on $\bbZ_+\times \bbZ$ by setting
\[
\mu(k,x):= (J(k)-J(k+1))\ind_{\{|x|\le k\}} ,\quad \text{ for } k \in \bbZ_+ , x \in \bbZ \,.
\]
Note that the second marginal of $\mu$ corresponds to $J(\cdot)$.
Its first marginal is given by 
\[
\bar \mu(k):=(2k+1) (J(k)-J(k+1)) \,.
\]
We consider $\mathcal U$ a Poisson process on $\bbZ_+\times \bbZ \times \bbR$ with intensity given by $\rho\, \mu \otimes  \dd t$, where $\dd t$ is the Lebesgue measure.
We let $(U_i,V_i,\vartheta_i)_{i\ge 1}$ be the sequence of points in $\mathcal U$ ordered by increasing time \((\vartheta_i)_{i\geq 1}\), and we set
\begin{equation}
  \label{def:Y}
 Y_t:=\sum_{i\ge 1} V_i \ind_{\{\vartheta_i\in[0, t]\}} \,.
\end{equation}
This  is indeed a random walk with transition kernel \(J(\cdot)\) (the second marginal of \(\mu\)) and jump rate \(\rho\).
Let us stress that, contrary to the $V_i$'s, the $U_i$'s are not measurable with respect to $(Y_t)_{t\ge 0}$.
Note also that, by construction, conditionally on $(U_i,\vartheta_ i)_{i\ge 1}$, the $V_i$'s are independent and uniformly distributed on $\lint -U_i,U_i\rint$. 
For this reason, for any fixed $t$, the conditional distribution of $Y_t$ given $(U_i,\vartheta_ i)_{i\ge 1}$ is a convolution of symmetric unimodal distributions. 
This fact turns out to be really helpful in stochastic comparison and for this reason we use the variables~$U_i$ rather than $V_i$ in our computations, for instance when we want to use a variable that play the role of a ``jump amplitude''. 
We let $\bar \cU$ denote the Poisson process $\bbZ_+ \times \bbR$ obtained when deleting the second coordinate.
In the remainder of the paper, $\bbP$ denotes the probability associated with $\cU$ and $Y$ is defined by \eqref{def:Y}.
Given a set $I\subset \bbR$ we denote by~$\cF_I$ the $\sigma$-algebra generated by $\mathcal U$ with time coordinate in $I$, 
\begin{equation}
  \label{defft}
  \mathcal F_I:=\sigma\left( \mathcal U\cap (\bbZ_+\times \bbZ) \times I \right) \quad \text{ and } \quad \mathcal F_t:= \mathcal F_{[0,t]} \,.
\end{equation}

%

\subsection{A weighted measure and a comparison result}
\label{sec:sizebiased}

Let us define
\begin{equation}
  \label{def:w}
  w(s,t,Y):= \frac{K_w(s,t,Y)}{ K(t-s)} = \frac{\bP(X_{t-s} = Y_t-Y_s)}{\P(W_{t-s}=0)} \,,
\end{equation}
and note that this is a non-negative random variable with \(\bbE[w(s,t,Y)] =1\).
In particular, \(w(s,t,Y)\) can be interpreted as probability density with respect to \(\bbP\).
Given a finite increasing sequence $\mathbf{t} = (t_i)_{i\in \lint 0, m\rint}$, let us thus define the following \textit{weighted} measure w.r.t.\ \(\bbP\):
\begin{equation}
  \label{def:Pt}
  \dd  \bbP_{\bt} = \prod_{i=1}^m w(t_{i-1},t_i,Y) \dd \bbP \,.
\end{equation}
Recall that \(\bbP\) is the law of the Poisson point process \(\cU\) so \(\bbP_{\bt}\) is a new law of \(\cU\).
However, we have the nice following description for the probability $\bbP_{\bt}$ in term of how the law of~\(Y\) is modified.
For a process \((A_t)_{t\geq 0}\), we use the notation $A_{[r,s]} = (A_u-A_r)_{u\in [r,s]}$.

\begin{lemma}
  \label{lem:sizebiased}
  For any fixed $\bt = (t_i)_{0\leq i\leq m}$, the following properties hold under $\bbP_{\bt}$:
\begin{enumerate}
  \item \label{i:independence}
  The blocks $(Y_{[t_{i-1},t_i]})_{1\leq i \leq m}$ are independent.
  \item \label{ii:description}
  The distribution of $Y_{[t_{i-1},t_i]}$ is described as follows: for any non-negative measurable \(f\),
  \begin{equation*}
    \bbE_{\bt}\big[ f(Y_{[t_{i-1},t_i]}) \big] =  \E \big[ f(W_{[0,\rho(t_i-t_{i-1})]}) \mid W_{t_{i}-t_{i-1}} =0 \big] \,.
  \end{equation*}
\end{enumerate}
\end{lemma}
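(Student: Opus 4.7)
The plan is to handle both items through one calculation, arranging things so that the factorisation trivialises (i) and reduces (ii) to a single-block computation. First, I would note that although $\cU$ carries strictly more information than $Y$, the weight $w(t_{i-1}, t_i, Y)$ depends on the Poisson process only through the increment $Y_{t_i} - Y_{t_{i-1}}$, hence is $\sigma(Y_{[t_{i-1}, t_i]})$-measurable. Since $\cU$ has independent restrictions to disjoint time-strips $\bbZ_+\times\bbZ\times[t_{i-1},t_i]$, the blocks $(Y_{[t_{i-1}, t_i]})_{1\le i\le m}$ are already independent under $\bbP$. The product form of the Radon--Nikodym derivative
$$\frac{\dd\bbP_{\bt}}{\dd\bbP} = \prod_{i=1}^m w(t_{i-1}, t_i, Y)$$
then immediately gives that the blocks remain independent under $\bbP_{\bt}$, with marginal density proportional to $w(t_{i-1}, t_i, Y)$. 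This establishes (i) and reduces (ii) to identifying this one-block marginal.

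For the marginal, fix $i$, set $\delta := t_i - t_{i-1}$, and recall $X_t = W^{(1)}_{(1-\rho)t}$, $Y_t = W^{(2)}_{\rho t}$, so that under $\bbP$ the block $Y_{[t_{i-1}, t_i]}$ has the same law as $W_{[0, \rho\delta]}$ for a rate-one walk $W$ (just a time change). With this identification,
$$w(t_{i-1}, t_i, Y) = \frac{\bP(X_\delta = Y_{t_i} - Y_{t_{i-1}})}{\P(W_\delta = 0)} = \frac{\P(W'_{(1-\rho)\delta} = W_{\rho\delta})}{\P(W_\delta = 0)},$$
where $W'$ is an independent copy of $W$. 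The key observation is that by symmetry of $J$ one has $\P(W'_{(1-\rho)\delta} = y) = \P(W'_{(1-\rho)\delta} = -y)$, so the Markov property of $W$ at time $\rho\delta$ (which writes $W_\delta = W_{\rho\delta} + (W_\delta - W_{\rho\delta})$ with independent summands) yields
$$\P(W'_{(1-\rho)\delta} = W_{\rho\delta}) = \P\bigl(W_\delta = 0 \,\big|\, W_{[0, \rho\delta]}\bigr).$$
Plugging this into the expression for $\bbE_{\bt}[f(Y_{[t_{i-1}, t_i]})]$ and using the tower property gives
$$\bbE_{\bt}\bigl[f(Y_{[t_{i-1}, t_i]})\bigr] = \frac{\E\bigl[f(W_{[0, \rho\delta]}) \ind_{\{W_\delta = 0\}}\bigr]}{\P(W_\delta = 0)} = \E\bigl[f(W_{[0, \rho\delta]}) \,\big|\, W_\delta = 0\bigr],$$
which is precisely the bridge description claimed in (ii).

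I do not anticipate any real obstacle here: once $w(t_{i-1}, t_i, Y)$ is recognised as a disguised bridge density, the lemma collapses to a one-line Markov/symmetry computation. The only point requiring a bit of care is the coexistence of two time scales — the block has physical duration $\delta$, but seen from the rate-one walk it only runs for time $\rho\delta$ — which is why $W_{[0, \rho\delta]}$ and the conditioning $\{W_\delta = 0\}$ live on different time horizons in the statement. Symmetry of $J$ is used in exactly one place (to flip the sign in $\P(W'_{(1-\rho)\delta}=\pm W_{\rho\delta})$), and monotonicity of $J$ is not needed for this lemma — it will become important only for the stochastic comparison of \Cref{lem:domination} that follows.
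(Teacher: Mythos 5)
Your proof is correct and follows essentially the same route as the paper's: both reduce to a one-block computation via the product structure of $\bbP_{\bt}$, then recognise $w(t_{i-1},t_i,Y)$ as a bridge density by reassembling the $X$-increment and the $Y$-increment into a single rate-one walk $W$ over the full block length $\delta = t_i - t_{i-1}$ (the paper does this slightly more tersely by writing the numerator as $\bbE\otimes\bE[f(Y_{[t_{i-1},t_i]})\ind_{\{Y_{t_i}-Y_{t_{i-1}}=X_{t_i-t_{i-1}}\}}]$ and invoking the jump rates, while you spell out the Markov decomposition $W_\delta = W_{\rho\delta} + (W_\delta - W_{\rho\delta})$ and note explicitly where symmetry of $J$ enters). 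Your version is a correct unpacking of the paper's ``the conclusion follows'' step; there is no gap.
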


\begin{proof}
The first part is obvious from the product structure of $\bbP_{\bt}$.
For the second part, using the definition~\eqref{def:w} of \(w(t_{i-1},t_i,Y)\), we simply write
\[
\bbE_{\bt}\big[ f(Y_{[t_{i-1},t_i]}) \big] =\bbE\big[ f(Y_{[t_{i-1},t_i]}) w(t_{i-1},t_i,Y)\big]
  = \frac{\bbE\otimes \bE\big[ f(Y_{[t_{i-1},t_i]}) \ind_{\{Y_{t_i-t_{i-1}} = X_{t_{i}-t_{i-1}}\}} \big]}{\P(W_{t_i-t_{i-1}}=0)}  \,.
\]
The conclusion follows, recalling that $Y$ and $X$ have jump rates $\rho$ and $1-\rho$ respectively (and \(W\) has jump rate \(1\)).
\end{proof}

We can also compare the weighted measure \(\bbP_{\bt}\) with the original one \(\bbP\), by using the Poisson construction of the previous section.
We equip $\cP\left( \bbZ_+\times \bbR\right)$ with the inclusion order, and we say that a function $\varphi: \cP\left( \bbZ_+\times \bbR\right)\to \bbR_+$ is increasing if $\varphi(U)\le \varphi(V)$ whenever $U\subset V$. 
Recall that $\bar \cU$ denote the Poisson process obtained when ignoring the second coordinate in $\cU$.

\begin{proposition}
\label{lem:domination}
For any non-decreasing function $\varphi: \cP\left( \bbZ_+\times \bbR\right)\to \bbR_+$, we have
\[
\bbE_{{\bf t}}[\varphi(\bar \cU)]\le \bbE[\varphi(\bar \cU)] \,.
\]
\end{proposition}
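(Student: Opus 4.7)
My plan is to reduce the claim to a Harris--FKG-type anti-correlation inequality for Poisson point processes. The first step is to compute the marginal density of $\bar\cU$ under $\bbP_{\bt}$ with respect to $\bbP$. The density $\tfrac{d\bbP_{\bt}}{d\bbP} = \prod_{i=1}^m w(t_{i-1}, t_i, Y)$ is a function of the full $\cU$, but conditionally on $\bar\cU$ the $V_j$'s are independent, each uniform on $\lint -U_j, U_j \rint$, and $w(t_{i-1}, t_i, Y)$ depends on $Y$ only through the increment $Y_{t_i} - Y_{t_{i-1}} = \sum_{j \in J_i} V_j$, where $J_i := \{j : \vartheta_j \in (t_{i-1}, t_i]\}$. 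Since these increments are independent across disjoint intervals given $\bar\cU$, the conditional expectation factorizes:
$$\bbE_\bbP\bigg[\prod_{i=1}^m w(t_{i-1}, t_i, Y) \,\bigg|\, \bar\cU\bigg] = \prod_{i=1}^m f_i(\bar\cU_i),$$
with $\bar\cU_i := \bar\cU \cap (\bbZ_+ \times (t_{i-1}, t_i])$ and
$$f_i(\bar\cU_i) = \frac{1}{\P(W_{t_i-t_{i-1}} = 0)} \sum_{x \in \bbZ} \bP(X_{t_i-t_{i-1}} = x)\, \bbP\bigg(\sum_{j \in J_i} V_j = x \,\bigg|\, \bar\cU_i\bigg).$$
In particular, for any $\sigma(\bar\cU)$-measurable $\varphi \geq 0$, $\bbE_{\bt}[\varphi(\bar\cU)] = \bbE_\bbP[\varphi(\bar\cU) \prod_i f_i(\bar\cU_i)]$.

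The crux of the argument is then to show that each $f_i$ is a \emph{decreasing} function of $\bar\cU_i$ for the inclusion order on $\cP(\bbZ_+ \times \bbR)$, i.e.\ that adding a point $(U, \vartheta)$ to $\bar\cU_i$ can only decrease $f_i$. Let $h(x) := \bP(X_{t_i - t_{i-1}} = x)$, which is symmetric unimodal by \Cref{lem:unimod}, and let $\mu$ be the conditional law of $\Delta_i := \sum_{j \in J_i} V_j$ given $\bar\cU_i$, symmetric unimodal as a convolution of uniforms by \Cref{stablemma}. Adding $(U, \vartheta)$ replaces $\mu$ by $\mu \ast \nu_U$, where $\nu_U$ is the symmetric probability uniform on $\lint -U, U\rint$, and \Cref{monotolemma} gives $\mu \preccurlyeq \mu \ast \nu_U$. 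The layer-cake identity
$$\sum_x h(x) \lambda(x) = \sum_{k \geq 0}\big(h(k) - h(k+1)\big)\, \lambda(\lint -k, k\rint),$$
applied to $\lambda \in \{\mu, \mu \ast \nu_U\}$, together with $h(k) - h(k+1) \geq 0$ (unimodality of $h$) and the above domination, yields $\sum_x h(x) \mu(x) \geq \sum_x h(x) (\mu \ast \nu_U)(x)$, i.e.\ the monotonicity of $f_i$.

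To conclude, I observe that under $\bbP$ the $\bar\cU_i$'s are independent Poisson point processes on $\bbZ_+ \times (t_{i-1}, t_i]$, each of finite total intensity $\rho(t_i - t_{i-1})$ since an Abel summation using the symmetry of $J$ gives $\bar\mu(\bbZ_+) = \sum_{k \geq 0}(2k+1)(J(k) - J(k+1)) = 1$. Harris' inequality for Poisson point processes, applied to the decreasing $f_i$ with $\bbE_\bbP[f_i] = 1$, implies that the tilted marginal $f_i \,d\bbP|_{\bar\cU_i}$ is stochastically dominated by $\bbP|_{\bar\cU_i}$. Since the $\bar\cU_i$'s remain independent under $\bbP_{\bt}$ by the product factorization above, the product of these dominations together with the increasingness of $\varphi$ yields $\bbE_{\bt}[\varphi(\bar\cU)] \leq \bbE_\bbP[\varphi(\bar\cU)]$. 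The main obstacle is the monotonicity of $f_i$: it rests essentially on the unimodality hypothesis on $J$, without which adding a jump could in principle increase the chance that $\Delta_i$ matches $X_{t_i - t_{i-1}}$, destroying the anti-correlation needed for the conclusion.
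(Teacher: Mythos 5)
Your proposal is correct and follows essentially the same route as the paper's proof: condition on $\bar\cU$, use the product structure together with unimodality and \Cref{monotolemma} (plus \Cref{lem:unimod}) to show that the conditional density $\bbE[\prod_i w(t_{i-1},t_i,Y)\mid\bar\cU]$ is a non-increasing function of $\bar\cU$, and conclude by Harris--FKG. The only cosmetic difference is that you phrase the conclusion as stochastic domination of the tilted Poisson marginals $f_i\,\dd\bbP|_{\bar\cU_i}\preccurlyeq\bbP|_{\bar\cU_i}$ followed by tensorization, whereas the paper applies Harris--FKG once globally to the conditional density; the two phrasings are equivalent.
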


\begin{rem}
  Let us stress that the analogue result is false if one considers either the full Poisson process~$\cU$ or the Poisson process \((V_i,\vartheta_i)_{i\geq 1}\) usually used to define \(Y\). 
  Indeed, in view of \Cref{lem:sizebiased}-\ref{ii:description} above, because of the conditioning to a future return to \(0\), the presence of a large positive jump for $Y$ makes a large negative jump more likely under \(\bbP_{\bt}\).
\end{rem}

\begin{proof}
By definition of \(\bbP_{\bt}\), we have 
\begin{equation*}
 \bbE_{\bf t}\left[\varphi(\bar \cU)\right]= \bbE\bigg[ \varphi(\bar \cU) \bbE\Big[ \prod_{i=1}^m w(t_{i-1},t_i,Y) \ \Big| \ \bar \cU\Big] \bigg]  \,.
\end{equation*}
Is is enough to show that the conditional expectation
$\bbE\big[ \prod_{i=1}^m w(t_{i-1},t_i,Y) \ | \ \bar \cU\big] $ is a non-increasing function of $\bar \cU$.
Indeed, applying the Harris-FKG inequality (and recalling that \(\bbE[w(s,t,Y)]=1\)) then directly yields the result.
Now, because of the product structure of the measure, it is sufficient to check this for \(m=1\), or more simply put and recalling the definition of \(w(0,t,Y)\), that $\bbE[ \bP(X_t=Y_t) \ | \ \bar \cU]$ is a non-increasing function of $\bar \cU$.

To see this, remark that conditionally on $\bar \cU$, denoting by $\cJ_{t} = |\bar \cU \cap (\bbZ_+\cap [0,t])|$ the number of jumps in the interval \([0,t]\), the distribution of $Y_t$ is given by a convolution of \(\cJ_{t}\) independent random variables \((V_i)_{1\leq i \leq \cJ_t}\) which are uniformly distributed on $\lint -U_i,U_i\rint$ (thus the \(V_i\)'s are symmetric unimodal). 
From \Cref{monotolemma}, each convolution stochastically increases the distribution of $|Y_t|$ which implies that for any non-increasing function \(f:\bbZ_+ \to \bbR_+\) the conditional expectation $\bbE\left[ f(|Y_t|) \ | \ \bar \cU\right]$ is a non-increasing function of $\bar \cU$.
Applying this to the function $y\mapsto \bP(|X_t|=y)$, which is non-increasing by \Cref{lem:unimod}, completes the proof. 
\end{proof}

\section{Proof of \texorpdfstring{\Cref{prop:Firrel,prop:partition}}{two propositions}}
\label{sec:upper}

In this section, we prove~\Cref{prop:Firrel} and \Cref{prop:partition}.
The strategy of the proof consists in estimating a truncated moment of a (modified) partition function, using the perspective of the size biased measure. 
A similar idea is used for the proofs of \Cref{prop:largerho} and \Cref{rele1,rele2}, but in that case a coarse-graining argument is needed, which makes the method more technical (see \Cref{sec:method}).

\subsection{Some notation and preliminaries}

Before we go into the proofs of \Cref{prop:Firrel} and \Cref{prop:partition}, let us introduce some notation (we refer to \cite[Section 3]{BLirrel} for more background).
For \(\gb\geq \gb_0\), define the probability density
\begin{equation}
  \label{def:Kbeta}
K_{\gb}(t) := \frac{\gb}{\gb_0} e^{-\tf(\gb)} K(t) \,,
\end{equation}
and we let \(\bQ_{\gb}\) denote the law of a renewal process with inter-arrival distribution \(K_{\gb}\), note that \(\bQ=\bQ_{\gb_0}\).
Then, in analogy with~\eqref{eq:zpinning}, recalling the definition~\eqref{def:w} of \(w(s,t,Y)\), we can write that 
\begin{equation}
\label{eq:martingale}
 \frac{Z^{Y,\mathrm c}_{\beta,T}}{\bbE[Z^{Y,\mathrm c}_{\beta,T}]} =: \cW^Y_{\beta,T}= \bQ_{\beta,T}\bigg[  \prod_{i=1}^{\mathcal N_T} w(\tau_{i-1},\tau_i,Y)  \bigg] \,,
\end{equation}
where $\bQ_{\beta,T}:=\bQ_{\gb}\left( \cdot \ | \ T\in \tau \right) = \lim_{\gep\downarrow 0} \bQ_{\gb}\left( \cdot \ | \ \tau\cap [T,t+\gep) \neq \emptyset \right)$. (See also Equation~(3.7) in \cite{BLirrel}.)

\subsubsection{Reformulating the results in terms of the normalized partition function}

Both \Cref{prop:Firrel} and \Cref{prop:partition} will follow from estimates on a truncated moment of \(\cW_{\gb,T}^{Y}\).
For instance, \Cref{prop:Firrel} is a consequence of the following.

\begin{proposition}
  \label{prop:apenasdif}
  There is a constant \(c>0\) such that, for any $\rho \in (0,1)$ and any $\beta \in (\beta_0,2\beta_0)$ we have
  \begin{equation}\label{werh}
  \liminf_{T\to\infty} \frac1T \log \bbE\big[ 1\wedge \cW_{\beta,T}^Y  \big] \leq - c \rho \tf(\beta) \,.
  \end{equation}
\end{proposition}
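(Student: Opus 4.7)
My plan is to apply the size-biasing strategy sketched in the introduction, in its simplest form (without coarse-graining).

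The starting point is the elementary inequality
\begin{equation*}
\bbE[1\wedge \cW^Y_{\beta,T}] \leq \bbP(\cA) + \bbE\big[\cW^Y_{\beta,T}\mathbf{1}_{\cA^{\cc}}\big],
\end{equation*}
valid for any event $\cA$ (using $1\wedge\cW\leq 1$ on $\cA$ and $1\wedge\cW\leq\cW$ on $\cA^{\cc}$). \Cref{lem:sizebiased} and \Cref{lem:domination} suggest that the size-biased measure concentrates on trajectories where $Y$ makes fewer jumps. I therefore take
\[\cA = \big\{N_T \leq (1-\delta)\rho T\big\} \qquad \text{with } N_T := |\bar\cU\cap(\bbZ_+\times[0,T])|,\]
the number of jumps of $Y$ in $[0,T]$, and $\delta\in(0,1)$ a constant to be chosen. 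Under $\bbP$, $N_T$ is Poisson$(\rho T)$, so Cram\'er's inequality gives $\bbP(\cA)\leq e^{-I(\delta)\rho T}$ with $I(\delta)>0$.

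For the second term I would use the Chernoff bound $\bbE[\cW^Y_{\beta,T}\mathbf{1}_{\cA^{\cc}}]\leq e^{-s(1-\delta)\rho T}\bbE[\cW^Y_{\beta,T}\,e^{sN_T}]$ for $s>0$ and compute the MGF $\bbE[\cW^Y_{\beta,T}\,e^{sN_T}]$ by exponential tilting. Multiplying by $e^{sN_T}/\bbE[e^{sN_T}]$ (with $\bbE[e^{sN_T}]=e^{\rho T(e^s-1)}$) changes, by Poisson thinning, the jump rate of $Y$ from $\rho$ to $\rho e^s$. Under the tilted law $\bbP_s$, the process $X-Y$ is a random walk with kernel $J$ and jump rate $c := 1-\rho+\rho e^s$; a time-rescaling then yields $\bbE_s[Z^{Y,\cons}_{\beta,T}] = c\,z^{\cons}_{\beta/c,\,cT}$. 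Using the standard pinning identity $z^{\cons}_{\beta,T} = e^{\tf(\beta) T}u_\beta(T)$ (with $u_\beta$ the renewal density of $\bQ_\beta$; see the reformulation \eqref{eq:martingale}--\eqref{eq:zpinning}), one obtains the clean formula
\begin{equation*}
\bbE\big[\cW^Y_{\beta,T}\,e^{sN_T}\big] = c\,e^{\rho T(e^s-1)-\tf(\beta) T + \tf(\beta/c)\,cT}\cdot \frac{u_{\beta/c}(cT)}{u_\beta(T)}.
\end{equation*}

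The last step is the optimization in $s$ and $\delta$. A Taylor expansion at $s=0$ gives $\rho(e^s-1)-\tf(\beta)+c\,\tf(\beta/c) = \rho s\,(1+\tf(\beta)-\beta\tf'(\beta)) + O(s^2)$, so setting $\delta := (\beta\tf'(\beta)-\tf(\beta))/2$ and choosing $s>0$ sufficiently small yields
\begin{equation*}
\liminf_{T\to\infty}\tfrac{1}{T}\log \bbE[\cW^Y_{\beta,T}\mathbf{1}_{\cA^{\cc}}] \leq -\rho s\,(\beta\tf'(\beta)-\tf(\beta))/4 .
\end{equation*}
In the irrelevant regime $\gamma\in(\tfrac23,1)$ one has $\nu=1$ (by \Cref{homener}), hence $\tf(\beta)\sim C(\beta-\beta_0)$ as $\beta\downarrow\beta_0$; it follows that $\beta\tf'(\beta)-\tf(\beta)\to C\beta_0>0$, and that this quantity (hence also $\delta$) is uniformly bounded below on $\beta\in(\beta_0,2\beta_0)$ by a positive constant. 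Combining, $\bbP(\cA)+\bbE[\cW^Y_{\beta,T}\mathbf{1}_{\cA^{\cc}}]\leq e^{-c_1\rho T}$ for some $c_1=c_1(J)>0$; using $\tf(\beta)\leq \tf(2\beta_0)$, the proposition follows with $c := c_1/\tf(2\beta_0)$. The main technical obstacle I anticipate is the uniform control of the renewal-density ratio $u_{\beta/c}(cT)/u_\beta(T)$: for small $s$ one has $\beta/c > \beta_0$, so both renewals have finite mean and the ratio tends to a positive constant (contributing only $o(T)$ on the log-scale), but verifying this uniformly as $\beta\downarrow \beta_0$ and ensuring one stays in the supercritical regime for $\beta/c$ requires some careful renewal-theoretic bookkeeping.
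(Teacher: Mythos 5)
Your strategy — size-biasing combined with an exponential tilting of the jump count and an exact computation of the moment generating function $\bbE[\cW^Y_{\beta,T}\,e^{sN_T}]$ via the identity $\bbE_s[Z^{Y,\cons}_{\beta,T}]=c\,z^{\cons}_{\beta/c,\,cT}$ (which I verified is correct) — is a genuinely different and elegant route compared to the paper, which instead works with a count $\cJ^{\beta}_{(0,T]}$ of \emph{large} jumps only (those with $U_i K(1/\tf(\beta))\geq\beta_0$) and controls $\bbP_\tau(\cA^\cc)$ via the stochastic domination \Cref{lem:domination} plus Poisson large deviations. Unfortunately the proposal contains a quantitative gap that is not just bookkeeping but makes the argument fail in precisely the regime in question.

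The culprit is the claim that $\nu=1$ for $\gamma\in(\tfrac23,1)$. You appear to have read this off \Cref{homener}, whose statement ``$\nu=\frac{\gamma}{1-\gamma}\wedge 1$'' is a misprint (\Cref{rele1} displays the correct $\nu=1\vee\frac{\gamma}{1-\gamma}$): for $\gamma\in(\tfrac23,1)$ one has $\alpha=\frac{1-\gamma}{\gamma}\in(0,\tfrac12)$, the renewal has infinite mean, and $\nu=\gamma/(1-\gamma)>2$. Consequently $\tf'(\beta)\to 0$ as $\beta\downarrow\beta_0$, so $D:=\beta\tf'(\beta)-\tf(\beta)\sim\beta_0\nu(\beta-\beta_0)^{\nu-1}\to 0$, \emph{not} to $C\beta_0>0$. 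Tracking your optimisation with the correct $\nu$: after Taylor expansion the Chernoff exponent is $\rho s(\delta-D)+O(\rho s^2)$, and the best one can do (choosing $\delta\asymp D$, $s\asymp D$) gives $-c\rho D^2\asymp -c\rho(\beta-\beta_0)^{2\nu-2}$, while $\tf(\beta)\asymp(\beta-\beta_0)^{\nu}$. Since $2\nu-2>\nu$ for $\nu>2$, this is \emph{strictly weaker} than the required $-c\rho\tf(\beta)$ as $\beta\downarrow\beta_0$. Likewise $\bbP(\cA)\approx e^{-\rho T\delta^2/2}$ forces $\delta\gtrsim\sqrt{\tf(\beta)}\gg D$ to get the right order, which then kills the exponential tilting gain.

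The root cause is the choice of $N_T$ (all jumps) rather than the count of large jumps. Under the size-biased measure the expected number of jumps shifts by $\asymp\rho\tf(\beta)T$, but the fluctuations of $N_T$ are of order $\sqrt{\rho T}$, so the resulting large-deviation rate is $\asymp\rho\tf(\beta)^2$, not $\rho\tf(\beta)$. The paper's choice of counting only the jumps with $U_iK(1/\tf(\beta))\geq\beta_0$ is essential: that count has mean \emph{and} variance both $\asymp\rho\tf(\beta)T$, and the size-biasing shifts its mean by a constant fraction, producing the required $e^{-c\rho\tf(\beta)T}$. To salvage your MGF approach, you would have to apply the exponential tilting to a statistic that isolates the large-jump scale — at which point you essentially reproduce the paper's argument.
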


\begin{proof}[Proof of \Cref{prop:Firrel}]
Writing that \(W= (1\wedge W) (1\vee W)\), we get using Jensen's inequality that
\[
\tf(\rho,\gb) -\tf(\gb) = \lim_{T\to\infty} \frac{1}{T} \bbE \log \cW_{\gb,T}^Y \leq  \liminf_{T\to\infty} \frac{1}{T}\log \bbE\big[ 1\wedge \cW_{\beta,T}^Y  \big] + \liminf_{T\to\infty} \frac{1}{T}\log \bbE\big[ 1\vee \cW_{\beta,T}^Y  \big] \,.
\]
As we have \(\bbE[ 1\vee \cW_{\beta,T}^Y ]\leq 1+ \bbE[ \cW_{\beta,T}^Y] =2\), \Cref{prop:apenasdif} shows that \(\tf(\rho,\gb) \leq (1-2\delta) \tf(\gb)\), uniformly in \(\gb \in (\gb_0,\gb_0+1)\).
\end{proof}
\noindent Similarly, \Cref{prop:partition} is a consequence of the following, thanks to a simple application of Markov's inequality.
\begin{proposition}
  \label{prop:martingale}
  Assume that \(J(x)\stackrel{|x|\to \infty}{\sim} |x|^{-(1+\gamma)}\) for some \(\gamma \in (0,1)\).
  Then, there is some constant \(c>0\) such that, for any \(\rho \in (0,1)\) we have for all \(T\) large enough
  \[
  \bbE\left[ 1\wedge \cW_{\beta_0,T}^Y\right] \leq  T^{-c \rho } \,.
  \]
\end{proposition}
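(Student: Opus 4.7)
The plan is to apply the change of measure / size-biasing strategy in its simplest direct form. Starting from the elementary inequality
\[
\bbE\big[1 \wedge \cW^Y_{\beta_0,T}\big] \leq \bbP(\cA) + \bbE\big[\cW^Y_{\beta_0,T} \mathbf{1}_{\cA^c}\big],
\]
valid for any event \(\cA \in \cF_T\), the goal is to exhibit a judiciously chosen event \(\cA\) for which both terms on the right-hand side are bounded by \(T^{-c\rho}\).

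I would formulate \(\cA\) in terms of the Poisson process \(\bar\cU\) introduced in \Cref{sec:propertiesRW}, which encodes the jumps of \(Y\). A natural candidate is \(\cA = \{\Phi(\bar\cU) \leq a\}\), where \(\Phi\) counts the points \((U_i,\vartheta_i)\) with \(\vartheta_i \leq T\) and amplitude \(U_i \geq M\), for a threshold \(M = M(T,\rho)\). Using the tail estimate \(\bar\mu([M,\infty)) \sim c M^{-\gamma}\) derived from \(J(x)\sim |x|^{-(1+\gamma)}\), one tunes \(M \asymp (T/\log T)^{1/\gamma}\) so that \(\lambda := \rho T \bar\mu([M,\infty))\) is of order \(\rho \log T\). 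Under \(\bbP\), \(\Phi\) is Poisson of mean \(\lambda\), and a standard Chernoff bound yields \(\bbP(\Phi \leq \lambda/2) \leq e^{-c\lambda} \leq T^{-c\rho}\); hence \(\bbP(\cA) \leq T^{-c\rho}\).

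For the second term, I would use the identity
\[
\bbE\big[\cW^Y_{\beta_0,T}\mathbf{1}_{\cA^c}\big] = \bQ_{\beta_0,T}\big[\bbE_{\bt}[\mathbf{1}_{\cA^c}]\big]
\]
(obtained by Fubini, identifying the random renewal \(\tau\) under \(\bQ_{\beta_0,T}\) with the parameter \(\bt\) of the weighted measure \(\bbP_{\bt}\) in~\eqref{def:Pt}), and bound \(\bbE_{\bt}[\mathbf{1}_{\cA^c}]\) uniformly in \(\bt\). \Cref{lem:sizebiased} describes \(Y\) under \(\bbP_{\bt}\) as a piecewise bridge: on each block \([t_{i-1},t_i]\), \(Y\) is distributed as \(W_{[0,\rho\Delta_i]}\) conditioned on \(W_{\Delta_i}=0\), with \(\Delta_i := t_i-t_{i-1}\). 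This forces \(Y\) back to \(0\) at each \(t_i\) and should suppress the occurrence of large jumps. Combined with a Markov estimate \(\bbE_{\bt}[\mathbf{1}_{\cA^c}] \leq (\lambda/2)^{-1}\bbE_{\bt}[\Phi]\), this reduces the problem to a quantitative upper bound on the expected number of large jumps of the conditioned walk.

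The main obstacle is precisely quantifying this suppression. The stochastic domination furnished by \Cref{lem:domination} alone only yields \(\bbE_{\bt}[\Phi] \leq \bbE[\Phi] = \lambda\), which is insufficient. The hard part will be to leverage the bridge description from \Cref{lem:sizebiased}, together with the local limit theorem estimates for the \(\gamma\)-stable walk (cf.~\Cref{sec:annealed}), to show that the conditioning \(W_{\Delta_i}=0\) strictly reduces the conditional density of large jumps in \([0,\rho\Delta_i]\) by a multiplicative factor that, once accumulated across the \(\sim u(T)\) blocks under \(\bQ_{\beta_0,T}\), yields the polynomial gain \(T^{-c\rho}\). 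The precise choice of the threshold \(M\) and the statistic \(\Phi\) may need to be refined (e.g.\ restricting \(U_i\) to a dyadic window, or combining several scales) to make this gain manifest and optimize the resulting exponent.
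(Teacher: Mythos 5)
Your high-level scheme coincides with the paper's: the starting inequality \(\bbE[1\wedge\cW^Y_{\beta_0,T}]\leq\bbP(\cA)+\bbE[\cW^Y_{\beta_0,T}\ind_{\cA^c}]\) is exactly \Cref{lem:StefHub}, and the idea that \(\cA\) should count jumps of \(\bar\cU\) that the size-biasing suppresses is correct. However the proposal has three concrete gaps, and the first two are fatal to the specific construction you sketch.

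\emph{Fixed threshold versus time-dependent threshold.} You take a single scale \(M\asymp(T/\log T)^{1/\gamma}\). But the suppression effect of \(\bbP_{\bt}\) on the jump at a time \(\vartheta_i\in(\tau_{j-1},\tau_j]\) is governed by the \emph{local} gap \(\Delta\tau_j\), not by \(T\): by \Cref{lem:sizebiased} and Mecke's formula, the size-biased rate of a jump of amplitude \(\ell\) on that block carries the factor \((2\ell+1)^{-1}\sum_{|x|\leq\ell}\P(W_{\Delta\tau_j}=x)/\P(W_{\Delta\tau_j}=0)\), which is close to \(1\) (no suppression) whenever \(\ell\) is below the typical range of \(W\) over time \(\Delta\tau_j\), i.e.\ whenever \(\ell K(\Delta\tau_j)\lesssim 1\). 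In the regime \(\alpha=\frac{1-\gamma}{\gamma}<1\) (\(\gamma>\frac12\)), the largest gap of \(\tau\) under \(\bQ_{\beta_0,T}\) is typically of order \(T\), so on that block \(M K(\Delta\tau_j)\asymp M K(T)\asymp\log T/T\ll 1\); the size-biasing leaves the \(\Phi\)-count on that block essentially unchanged, and that one block already carries a macroscopic fraction of the time interval, hence of \(\lambda\). Your expectation shift is therefore not of order \(\lambda\), and no polynomial gain follows. The paper repairs this by using the time-dependent threshold \(U_iK(\vartheta_i)\geq\beta_0\) in the statistic \(F_T\) of~\eqref{def:Fab}; the contribution of each dyadic time scale to \(\bbE[F_T]\) is then \(\asymp\rho\), and the suppression on the corresponding \(\tau\)-intervals is a uniform factor \(\tfrac12\) (established exactly by the Mecke computation you point toward). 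That scale-matched choice of \(\Phi\) is the decisive missing idea.

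\emph{Uniformity over \(\tau\).} You propose to bound \(\bbE_{\bt}[\ind_{\cA^c}]\) ``uniformly in \(\bt\)'', but no such uniform bound can hold: for degenerate \(\tau\) (e.g.\ a single gap of length \(\asymp T\)) the size-biasing produces no appreciable shift and \(\bbP_{\tau}(\cA^c)\) is bounded away from \(0\). The paper therefore introduces the event \(B=\{\sum_j\Delta\tau_j/(1\vee\tau_j)\geq\delta\log T\}\) on the renewal trajectory, proves the shift estimate only on \(B\), and separately shows \(\bQ_{\beta_0,T}(B^c)\leq T^{-c\delta}\). Your proposal has no analogue of \(B\) and no control of \(\bQ_{\beta_0,T}\), so the bound cannot close.

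\emph{Markov is too weak.} Even granting the correct construction, the expectation shift one can realistically prove is \(\bbE_{\tau}[\Phi]\leq\tfrac12\bbE[\Phi]\asymp\tfrac12\lambda\), so Markov's inequality only yields \(\bbP_{\tau}(\Phi>\lambda/2)\leq\bbE_{\tau}[\Phi]/(\lambda/2)\lesssim 1\), not \(T^{-c\rho}\). The paper instead splits \(F_T=F_1+F_2\), uses \Cref{lem:domination} to dispose of \(F_2\), and applies an exponential Chernoff bound to \(F_1\), exploiting the independence of the block contributions under \(\bbP_{\tau}\) so that the per-block gains of order \(\eta\,\bbE[F^{(j)}]\) multiply across \(j\) and accumulate to \(T^{-c\eta^2\rho}\). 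Markov's inequality cannot manufacture this multiplicative accumulation; you need a concentration bound that respects the product structure.

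In short, you have the right skeleton and you have correctly located the hard step, but the fixed-threshold statistic, the absence of a \(B\)-type restriction on \(\tau\), and the reliance on Markov rather than Chernoff all need to be replaced, roughly as in the paper, to turn the sketch into a proof.
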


\subsubsection{The size-biased perspective}

We estimate directly a truncated moment of \(\cW_{\gb,T}^Y\) using the size biased measure.
We use the following:
\begin{lemma}
  \label{lem:StefHub}
  For any event $\cA = \cA_T\in \cF_T$, we have  
  \begin{equation}\label{eriop}
  \bbE\left[ 1\wedge \cW^Y_{\beta,T}\right]\le \bbP(\cA) + \bbE\big[ \cW^Y_{\beta,T} \ind_{\cA^{\cc}} \big] \,.
  \end{equation}
\end{lemma}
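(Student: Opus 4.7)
The plan is straightforward: this inequality is a soft, purely measure-theoretic splitting argument, so no structure of the model beyond the trivial bound $1 \wedge x \leq \min(1,x)$ is needed. I would not expect any genuine obstacle here.

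Concretely, I would decompose the expectation according to whether the event $\cA$ occurs:
\begin{equation*}
\bbE\big[1 \wedge \cW^Y_{\beta,T}\big] = \bbE\big[(1 \wedge \cW^Y_{\beta,T})\, \ind_{\cA}\big] + \bbE\big[(1 \wedge \cW^Y_{\beta,T})\, \ind_{\cA^{\cc}}\big].
\end{equation*}
On $\cA$, I would use $1 \wedge \cW^Y_{\beta,T} \leq 1$ to bound the first term by $\bbE[\ind_{\cA}] = \bbP(\cA)$. On $\cA^{\cc}$, I would use instead $1 \wedge \cW^Y_{\beta,T} \leq \cW^Y_{\beta,T}$ (which holds pointwise since $\cW^Y_{\beta,T} \geq 0$) to bound the second term by $\bbE[\cW^Y_{\beta,T} \ind_{\cA^{\cc}}]$. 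Summing the two estimates yields exactly \eqref{eriop}.

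The only reason to state this as a separate lemma is that it packages the change-of-measure philosophy cleanly: one chooses an event $\cA$ with two competing goals—make $\bbP(\cA)$ small (so that the trajectory $Y$ rarely belongs to $\cA$) while making $\bbE[\cW^Y_{\beta,T} \ind_{\cA^{\cc}}]$ small (meaning that the size-biased measure, which reweights by $\cW^Y_{\beta,T}$, puts almost all its mass on $\cA$). There is no hidden hard step; the entire content of the lemma is the elementary inequality $1 \wedge x \leq \ind_{\cA} + x \ind_{\cA^{\cc}}$ for $x \geq 0$, integrated against $\bbP$. The real work—identifying a good event $\cA$ and controlling both quantities—takes place in the subsequent sections when applying this lemma to prove Propositions \ref{prop:apenasdif} and \ref{prop:martingale}.
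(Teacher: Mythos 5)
Your proof is correct and is essentially identical to the paper's one-line argument (split on $\cA$, bound $1\wedge \cW^Y_{\beta,T}$ by $1$ on $\cA$ and by $\cW^Y_{\beta,T}$ on $\cA^{\cc}$). Incidentally, the paper's proof contains a small typo, writing ``$\leq 1$'' twice; your version states the intended bound on $\cA^{\cc}$ correctly.
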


\begin{proof}
  We simply use that \(1\wedge \cW^Y_{\beta,T} \leq 1\) on \(\cA\) and \(1\wedge \cW^Y_{\beta,T} \leq 1\) on \(\cA^{\cc}\).
\end{proof}

\noindent
Since \(\cW^Y_{\beta,T}\geq 0\) and \(\bbE[\cW^Y_{\beta,T}]=1\), we can view \(\cW_{\beta,T}^Y\) as a density of a new measure for \(Y\), called \textit{size-biased measure}.
Therefore, our guideline to prove \Cref{prop:apenasdif} or \Cref{prop:martingale} is to find some event \(\cA\) which has small probability under \(\bbP\) but becomes typical under the size biased measure.
This event \(\cA\) will depend on what we need to prove.
Note that, in view of~\eqref{eq:martingale} and recalling the definition~\eqref{def:Pt} of the weighted measure, we have that \(\bbE[ \cW^Y_{\beta,T} \ind_{\cA^{\cc}}] = \bQ_{\gb,T}[ \bbP_{\tau}(\cA^{\cc}) ] \).
To bound this, we will use the following inequality:
introducing an event \(B \in \sigma(\tau\cap[0,T])\), we have
\begin{equation}
  \label{eq:boundtildePAT}
  \bbE\big[ \cW^Y_{\beta,T} \ind_{\cA^{\cc}} \big] = \bQ_{\gb,T}[ \bbP_{\tau}(\cA^{\cc})]  \leq \bQ_{\gb,T}(B^{\cc}) + \bQ_{\gb,T}\big[\bbP_{\tau}(\cA^{\cc}) \ind_{B} \big]  \,.
\end{equation}
Therefore, we need to find some events \(\cA\) and \(B\) such that: \(\bbP(\cA)\) and \(\bQ_{\beta,T}(B^{\cc})\) are small and \(\bbP_{\tau}(\cA^{\cc})\) is small on the event \(B\).

\subsection{Proof of \texorpdfstring{\Cref{prop:apenasdif}}{the proposition}}

Let us first introduce the events \(\cA\) and \(B\) that we use in the proof of \Cref{prop:apenasdif}.
For any \(\gb\in (\gb_0,\gb_0+1)\), let us define for \(0\leq a <b \leq T\)
\[
\cJ_{(a,b]}^{\beta} := \sum_{i: \vartheta_i \in (a,b]} \ind_{\{U_i K(1/\tf(\gb)) \geq \beta_0\}}
\]
the number of ``\(U_i\)-jumps'' larger than \(\beta_0 K(1/\tf(\gb))^{-1}\) in the interval \((a,b]\). The value of the threshold corresponds to the typical maximal amplitude observed in a time interval of length $\tf(\beta)^{-1}$.
Then, given~$\eta$ and $\delta$ two positive parameters (to be fixed later in the proof) we define 
\[
\cA = \cA_T^{\gb} := \big\{ \cJ_{(0,T]}^{\gb} \leq (1-\eta) \bbE[\cJ_{(0,T]}^{\gb}] \big\} \,.
\]
and, letting \(\Delta\tau_j := \tau_j-\tau_{j-1}\),
\[
B = \bigg\{ \sum_{j=1}^{\cN_{T/2}}  \Delta\tau_j\ind_{\{\Delta\tau_j  \tf(\beta)\le 1\}} \ge \delta T \bigg\} \,.
\]
Thanks to \Cref{lem:StefHub} and \eqref{eq:boundtildePAT}, in order to conclude the proof of Proposition \ref{prop:martingale}, we need to prove the following statement: There exists a constant \(c_0>0\) such that, if \(\eta>0\) is small enough and \(\delta\leq 10\eta\), then the following three estimates hold true for all \(T\) sufficiently large
\begin{align}
  \label{eq:logPA}
  \bbP(\cA) &\leq \exp\big(- c_0 \eta^2 \rho \tf(\gb) T\big) \,,\\
  \label{eq:logPtildeA}
  \bbP_{\tau}(\cA^{\cc}) \ind_B &\leq  \exp\big(-c_0 \eta^2 \rho \tf(\gb) T\big) \,, \\
  \label{eq:logPB}
  \bQ_{\beta,T}(B^{\cc}) &\leq \exp\big(- c_0 \tf(\gb) T\big) \leq \exp\big(-  c_0 \rho \tf(\gb) T\big) \,.
\end{align}
Before we start the proof, let us provide some large deviation estimate for a Poisson random variable that we use below. If \(X \sim \mathrm{Poisson}(\lambda)\) for some \(\lambda>0\), then for \(t\in (0,\lambda]\) we have
\begin{equation}
  \label{LDPoisson}
  \bbP\big(  X -\bbE[X] \leq - t \big) , \bbP\big( X -\bbE[X] \geq t \big) \leq \exp\Big( - \frac14 \frac{t^2}{\lambda} \Big)\,.
\end{equation}
The proof is standard and relies on exponential Chernov's inequality.

\begin{proof}[Proof of~\eqref{eq:logPA}]
Under $\bbP$, $\mathcal{J}_{(0,T]}^{\gb}$ is a Poisson random variable of mean \(\bbE[\cJ_{(0,T]}^{\gb}] = \rho f(\gb) T\), where 
\begin{equation}
  \label{def:fb}
  f(\gb) := \sum_{k\geq \beta_0 K(1/\tf(\gb))^{-1}} \bar \mu( k) .
\end{equation}
Using the large deviation~\eqref{LDPoisson} for a Poisson variable, we obtain that \(\bbP(\cA)\le \exp(- \frac{\eta^2}{4} \rho f(\gb) T)\), and it only remains to show that \(f(\beta)\) is of the order of \(\tf(\beta)\).
Recalling that \(\bar\mu(k) = (2k+1) (J(k)-J(k+1)) \), summation by part readily shows that 
\begin{equation}\label{lassum}
\sum_{\ell\geq n} \bar \mu(\ell)  = (2n+1) J(n) + 2\sum_{\ell\geq  n+1} J(\ell) \stackrel{n\to \infty}{\sim} \frac{2(1+\gamma)}{\gamma} nJ(n) \,,
\end{equation}
where we have used regular variation for the last identity.
Therefore, using \eqref{implicit}, we obtain that 
\[
c \tf(\beta) \leq f(\beta)\le c'\tf(\beta)
\]
for some universal constants $c,c'>0$. This concludes the proof.
\end{proof}

\begin{proof}[Proof of \eqref{eq:logPtildeA}] 
We split $\cJ_{(0,T]}^{\beta} = \cJ_1+\cJ_2$ according to the contribution of ``small'' and ``big'' \(\tau\)-intervals respectively (for ):
\[
\cJ_1 := \sum_{j=1}^{\cN_{T/2}}  \cJ_{(\tau_{j-1},\tau_j]}^{\gb} \ind_{\{\Delta\tau_j\tf(\beta) \leq 1\}} \quad \text{ and } \quad \cJ_2 = \cJ_{[0,T]}^{\gb} -\cJ_1 \,.
\] 
The idea is that~\(\cJ_1\) corresponds to the part which is the ``most affected'' by the change of measure from~\(\bbP\) to~\(\bbP_{\tau}\)
We then have that \(\bbP_{\tau}(\cA^{\cc}) \leq \bbP_{\tau}(\cA_1) + \bbP_{\tau}(\cA_2)\), where 
\[
\cA_1 := \big\{ \cJ_1 \geq \bbE[\cJ_1]-2\eta \rho f(\beta) T \big\} \,,
\quad
\cA_2 := \big\{ \cJ_2 \geq  \bbE[\cJ_2]+\eta\rho f(\beta) T \big\} \,,
\]
recalling that \(\bbE[\cJ_1]+\bbE[\cJ_2]=\bbE[\cJ_{(0,T]}^{\gb}] =\rho Tf(\beta)\).
First of all, since \(\cJ_2\) is a non-decreasing function of \(\bar\cU\), the stochastic comparison result \Cref{lem:domination} shows that \(\bbP_{\tau}(\cA_2) \leq \bbP(\cA_2)\).
Therefore using the large deviations~\eqref{LDPoisson} for Poisson variables, since \(\bbE[\cJ_2]\leq \rho f(\gb) T\) we obtain
\[
\bbP_{\tau}(\cA_2) \leq \bbP(\cA_2) \leq \exp\Big( - \frac{\eta^2}{4} \rho  f(\beta) T \Big).
\]
To estimate $\bbP_{\tau}(\cA_1)$, using Chernov's exponential inequality and the product structure of \(\bbP_{\tau}\), see \Cref{lem:sizebiased}, we have 
\begin{equation}
  \label{Chernov1}
\bbP(\cA_1) \leq e^{2\eta^2 \rho f(\beta) T} \prodtwo{1\leq j\leq \cN_{T/2}}{\Delta\tau_j\tf(\beta)\le 1}\bbE_{\tau} \Big[ e^{\eta (\cJ_{(\tau_{j-1},\tau_{j}]}^{\beta}- \bbE[\cJ_{(\tau_{j-1},\tau_{j}]}^{\beta}])} \Big] \,.
\end{equation}
We show below that for any sufficiently small \(\eta>0\), any \(\gb \in (\gb_0,\gb+1)\) and any \(t\leq 1/\tf(\gb)\), we have 
\begin{equation}
  \label{eq:petitLaplace}
\bbE_{t} \Big[ e^{\eta (\cJ_{(0,t]}^{\beta}- \bbE[\cJ_{(0,t]}^{\beta}])} \Big] \leq e^{- \frac14 \eta \rho f(\beta) t} \,,
\end{equation}
recalling that $\dd \bbP_t:= w(0,t,Y) \dd \bbP$, see~\eqref{def:Pt}. 
This implies that 
\[
\bbP(\cA_1) \leq e^{2\eta^2 \rho f(\beta) T} \exp\bigg( - \frac14 \eta \rho f(\beta)  \sum_{j=1}^{\cN_{T/2}}  \Delta\tau_j\ind_{\{\Delta\tau_j  \tf(\beta)\le 1\}} \bigg) \leq e^{- \frac12 \eta^2 \rho f(\beta) T} \,,
\]
where the last inequality holds on the event~\(B\) (say with \(\delta =10\eta\)), concluding the proof of~\eqref{eq:logPtildeA}.
In order to prove~\eqref{eq:petitLaplace}, notice that $\cJ^{\beta}_{(0,t]}$ is an non-decreasing function of $\bar \cU$ and \(x\mapsto e^x -1-x\) is non-decreasing on \(\bbR_+\): we therefore get from \Cref{lem:domination} that
\begin{equation}
  \label{monotoJ}
  \bbE_{t}\big[e^{\eta \cJ_{(0,t]}^{\gb}} -1 - \eta \cJ_{(0,t]}^{\gb}\big] \leq \bbE\big[e^{\eta \cJ_{(0,t]}^{\gb}} -1 - \eta \cJ_{(0,t]}^{\gb}\big] \,,
\end{equation}
and hence 
\begin{equation}
  \label{aftermonotoJ}
  \bbE_{t} \Big[ e^{\eta (\cJ_{(0,t]}^{\beta}- \bbE[\cJ_{(0,t]}^{\beta}])} \Big] \leq 
  \bbE \Big[ e^{\eta (\cJ_{(0,t]}^{\beta}- \bbE[\cJ_{(0,t]}^{\beta}])} \Big] - \eta e^{-\eta \rho f(\beta) t} (\bbE -\bbE_{t})\big[ \cJ_{(0,t]}^{\beta}\big] \,.
\end{equation}
In particular, since \(\rho f(\beta) t \leq c\) by assumption, if \(\eta\) is small enough we get that
\[
\bbE_{t} \Big[ e^{\eta (\cJ_{(0,t]}^{\beta}- \bbE[\cJ_{(0,t]}^{\beta}])} \Big] \leq 1+ \eta^2 \rho f(\beta) t  - \frac23 \eta  (\bbE -\bbE_{t})\big[ \cJ_{(0,t]}^{\beta}\big] \leq e^{\eta^2 \rho f(\beta) t  - \frac23 \eta  (\bbE -\bbE_{t})[ \cJ_{(0,t]}^{\beta}]} \,.
\]
Next we show that for \(t\leq 1/\tf(\beta)\) we have
\begin{equation}
  \label{shiftinE}
   \bbE\big[ \cJ_{(0,t]}^{\gb}\big]-  \bbE_{t}[\cJ^{\beta}_{(0,t]}] \geq \frac12 \rho f(\beta) t \,,
\end{equation}
to conclude the proof of~\eqref{eq:petitLaplace} provided that \(\eta\) is small enough.
Using Mecke's formula \cite[Theorem 4.1]{LP18}, recalling the Poisson construction of Section~\ref{sec:propertiesRW} and using \Cref{lem:sizebiased}, we have
\begin{equation}
  \label{Mecke1}
\bbE_t\big[ \cJ_{(0,t]}^{\gb}\big] = \rho t \sum_{\ell \geq \beta_0 K(1/\tf(\gb))^{-1}} \bar\mu(\ell)  \frac{1}{2\ell+1}\sum_{x=-\ell}^{\ell} \frac{\P(W_t=x)}{\P(W_t=0)} \,.
\end{equation}
Now, using \Cref{monotolemma} we get that \(\P(W_t=0) \geq \P(W_{1/\tf(\gb)}=0)\), so recalling that \(K(s) = \beta_0 \P(W_s=0)\) we get that \( (2\ell+1) \P(W_t=0)\geq 2 \) for \(\ell \geq \beta_0 K(1/\tf(\gb))^{-1}\) and \(t\leq 1/\tf(\gb)\).
We therefore end up with 
\[
\bbE_t\big[ \cJ_{(0,t]}^{\gb}\big] \leq \frac12  \rho t \sum_{\ell \geq \beta_0 K(1/\tf(\gb))^{-1}} \bar\mu(\ell) =\frac12 \rho f(\beta) t \,,
\]
which proves~\eqref{shiftinE} and concludes the proof.
\end{proof}

\begin{proof}[Proof of \eqref{eq:logPB}]
First of all, since $B$ is measurable w.r.t.\ $\sigma(\tau\cap[0,\frac12T])$, we can remove the conditioning at the expense of an harmless multiplicative constant \(C\), see \Cref{lem:removecond}.
We therefore only need to show that \(\bQ_{\beta}( B^{\cc}) \leq \exp(- c_0 \delta \tf(\gb) T)\) for all \(T\) large.
Let us set \(\hat\Delta_j := \Delta\tau_j \ind_{\{\Delta\tau_j \leq 1/\tf(\gb)\}}\), so that we can write
\begin{equation*}
  \bQ_{\beta} \left( B^{\cc}\right)  \leq \bQ_{\gb} \Big( \cN_{T/2} \leq  S  \Big) + \bQ_{\gb} \Big( \sum_{j=1}^{S } \hat\Delta_j <  \delta T \Big) \,.
\end{equation*}
Therefore setting $m_{\gb} = \bQ_{\gb}[\tau_1]$ and $S:=  T/(4 m_{\beta})$, we show the following: There is a constant \(c_0\) such that, if \(\delta\) is small enough
\begin{equation}
  \label{reduc-badcaz}
   \bQ_{\gb} \big(  \tau_{S} \geq   2 m_{\beta} S \big) \leq e^{-  c_0  \tf(\gb) m_{\gb} S}\,, \quad
    \bQ_{\gb} \Big( \sum_{j=1}^{S} \hat\Delta_j < 4\delta m_{\gb} S \Big) \leq e^{-c_0 \tf(\gb) m_{\gb} S} \,,
\end{equation}
for all $T$ sufficiently large.
For the first inequality, using Chernov's bound, we get that for $u>0$,
\[
   \bQ_{\gb} \big(  \tau_{S} \geq  2 m_{\gb} S \big) \leq e^{- u \tf(\beta) \, 2 m_{\gb} S} \bQ_{\gb}\big[ e^{ u \tf(\beta) \tau_1} \big]^{S}  \leq e^{- \frac12 u \tf(\beta) \, 2 m_{\gb} S}
\]
where for the last inequality we have used \Cref{lem:Laplace} to get that $\bQ_{\gb}[ e^{ u \tf(\beta) \tau_1}] \le e^{\frac{3}{2} u \tf(\beta) m_{\beta}}$ for \(u\) small enough.
For the second inequality in~\eqref{reduc-badcaz}, using again Chernov's bound, we have 
\[
  \bQ_{\gb} \Big( \sum_{j=1}^{S} \hat\Delta_j < 4 \delta m_{\gb} S \Big) \leq e^{\tf(\gb) 4 \delta m_{\gb} S} \bQ_{\gb}\big[ e^{-\tf(\beta) \hat\Delta_1} \big]^{S}.
\]
Since $\tf(\gb)\hat\Delta_1 \leq 1$ we have 
\(
\bQ_{\gb}\big[ e^{-\tf(\gb) \hat\Delta_1} \big] \leq 1 - \frac{1}{2}\tf(\gb) \bQ_{\gb}[\hat\Delta_1] \leq e^{- \frac12 c m_{\gb} \tf(\gb)} \,,
\)
where for the last inequality we have used \Cref{lem:mbeta} to get that $\bQ_{\gb}[\hat\Delta_1] \geq c m_{\gb}$. 
Altogether, provided that $4\delta \leq \frac14 c$, we second inequality in~\eqref{reduc-badcaz}.
This concludes the proof of~\eqref{eq:logPB}.
\end{proof}

\subsection{Proof of \texorpdfstring{\Cref{prop:martingale}}{the proposition}}

 Recalling \Cref{lem:StefHub}, our first step is to  introduce the events \(\cA\) and \(B\) that we use in \eqref{eq:boundtildePAT} to prove our result.
We recall the Poisson construction of Section~\ref{sec:propertiesRW}, and consider the following random variable defined for \(0\leq a <b \leq T\)
\begin{equation}
  \label{def:Fab}
  F_{(a,b]} := \sum_{i: \vartheta_i \in (a,b]} \ind_{\{U_i K(\vartheta_i)\ge \beta_0\}}  \,.
\end{equation}
We then introduce the following associated event, for some fixed small \(\eta>0\) 
\[
  \cA = \cA_T := \big\{ F_{(0,T]} - \bbE[F_{(0,T]}] \leq -   \eta \rho \log T\big\} \,.
\]
Let us also introduce, for some (small) parameter \(\delta\) the event \(B\) as 
\[
B = \bigg\{ \sum_{j=1}^{\cN_{T/2}}  \frac{\Delta \tau_j}{1\vee \tau_j }  \geq \delta \log T \bigg\} \,.
\]
Then, in view of \Cref{lem:StefHub} and \eqref{eq:boundtildePAT} we only need to show that there is a constant \(c_0>0\) such that, if \(\delta\) is fixed small enough, for any \(\eta\) small enough and \(\delta= \eta^{1/2}\), for all large \(T\) we have
\begin{equation}
  \label{eq:polPA}
\bbP(\cA) \leq T^{- c_0 \eta^2 \rho} \,, 
\quad
\bbP_{\tau}(\cA^{\cc}) \ind_{B} \leq T^{- c_0 \eta^2 \rho} 
\quad \text{ and } \quad
\bQ_{\beta_0,T}(B^{\cc}) \leq  T^{- c_0 \delta} \,.
\end{equation}

\begin{proof}[Proof of~\eqref{eq:polPA} for \(\bbP(\cA)\)]
Let us notice that \(F_{(0,T]}\) is a Poisson random variable with mean given by (applying Mecke's formula):
\begin{equation}\label{eft}
 \bbE[F_{(0,T]}]=\rho \int^T_0 \sum_{\ell \ge \beta_0 K(t)^{-1}} \bar \mu(\ell) \dd t \,.
\end{equation}
Recalling~\eqref{lassum} and~\eqref{implicit}, we have
\(
\sum_{\ell \ge \beta_0 K(t)^{-1}} \bar \mu(\ell) \stackrel{t\to \infty}{\sim} c_J \, t^{-1}, 
\) so that combined with \eqref{eft} we get  
\begin{equation*}
 \bbE[F_{(0,T]}] \stackrel{T\to \infty}{\sim} c_{J} \rho \log T \,.
\end{equation*}
Using large deviations for Poisson random variables, see~\eqref{LDPoisson}, we obtain that \(\bbP(\cA) \leq e^{- c_0 \eta^2 \rho \log T}\), which gives the desired bound.
\end{proof}

\begin{proof}[Proof of~\eqref{eq:polPA} for \(\bbP_{\tau}(\cA^{\cc}) \ind_B\)]
Let us decompose \(F_{(0,T]} = F_1+F_2\), with 
\[
 F_1 := \sum_{j=1}^{\cN_{T/2}} F^{(j)} \qquad \text{ with } F^{(j)} = F_{( \frac{\tau_{j-1}+\tau_j}{2},\tau_j]} \,,
\]
and we let \(F_2= F_{(0,T]} -F_1\).
We then have \(\bbP_{\tau}(\cA^{\cc}) \leq \bbP_{\tau}(\cA_1)+ \bbP_{\tau}(\cA_2)\), with 
\[
\cA_1 = \big\{ F_1 - \bbE[F_1] \ge - 2\eta \rho \log T\big\}\,,
\qquad
\cA_2 = \big\{ F_2 - \bbE[F_2] \geq \eta  \rho \log T\big\}\,.
\] 
Since \(F_2\) is a non-decreasing function of \(\bar \cU\), we can use the stochastic domination of \Cref{lem:domination} to get that 
$\bbP_{\tau}(\cA_2) \leq \bbP(\cA_2)$.
Then, since $F_2$ is a Poisson variable under \(\bbP\), whose mean is smaller than $\bbE[F_{(0,T]}]\stackrel{T\to\infty}{\sim} c_{J} \rho \log T$, the large deviation~\eqref{LDPoisson} gives that \(\bbP(\cA_2) \leq e^{- c \eta^2 \rho \log T}\), as desired.
For \(\bbP_{\tau}(\cA_1)\), using Chernov's bound and the product structure of \(\bbP_{\tau}\) (see \Cref{lem:sizebiased}), we get similarly to~\eqref{Chernov1} that
\begin{equation}
  \label{Chernov2}
  \bbP_{\tau}(\cA_1) \leq e^{2 \eta^2 \rho \log T}\prodtwo{1\leq j\leq \cN_{T/2}}{\Delta\tau_j\tf(\beta)\le 1} \bbE_{\tau} \Big[ e^{\eta (F^{(j)}- \bbE[F^{(j)}])} \Big] \,.
\end{equation}
Now, as in \eqref{monotoJ}-\eqref{aftermonotoJ}, using the stochastic comparison  of \Cref{lem:domination} we have that
\begin{equation}
  \label{monotoF}
 \bbE_{\tau}\Big[ e^{\eta (F^{(j)}-\bbE[F^{(j)}]) } \Big]\le\bbE_{\tau}\Big[ e^{\eta (F^{(j)}-\bbE[F^{(j)}]) } \Big] - \eta e^{- \eta \bbE[F^{(j)}]}\, (\bbE-\bbE_{\tau})[F^{(j)}].
\end{equation}
Note that using Mecke's formula as in~\eqref{eft}, we have that 
\[
\bbE[F^{(j)}] = \rho \int_{\frac{\tau_{j-1}+\tau_j}{2}}^{\tau_{j}} \sum_{\ell \geq \beta_0 K(t)^{-1}} \bar \mu(\ell) \dd t  \,.
\]
Using \Cref{monotolemma}, we get that \(K(\tau_j) \leq K(t) \leq K(\tau_j/2)\) in the integral above. 
Hence, recalling that \(\sum_{\ell\geq \beta_0 K(s)^{-1}} \bar \mu(\ell) \stackrel{s\to \infty}\sim c s^{-1}\), we thus have that \(F^{(j)}\) is a Poisson random variable with mean
\begin{equation}
  \label{meanFj}
c \,\rho \frac{\Delta \tau_j}{1\vee \tau_j }   \leq \bbE[F^{(j)}] \leq   c' \, \rho \frac{\Delta \tau_j}{1\vee \tau_j} \,,
\end{equation}
where \(c,c'\) are universal constants.
In particular, \(\bbE[F^{(j)}] \leq c'\), so from~\eqref{monotoF} we get that for \(\eta\) small enough
\begin{equation}
  \label{eq:shiftF}
  \bbE_{\tau}\Big[ e^{\eta (F^{(j)}-\bbE[F^{(j)}]) } \Big] \leq 1+ \eta^2 \bbE[F^{(j)}] - \frac23 \eta (\bbE-\bbE_{\tau})[F^{(j)}] \,.
\end{equation}
Using Mecke's formula as in \eqref{Mecke1}, we obtain that 
\begin{equation*}
 \bbE_{\tau}[F^{(j)}] =  \rho \int_{\frac{\tau_{j-1}+\tau_j}{2}}^{\tau_{j}} \sum_{\ell \geq \beta_0 K(t)^{-1}} \bar\mu(\ell)  \frac{1}{2\ell+1}\sum_{x=-\ell}^{\ell} \frac{\P(W_t=x)}{\P(W_t=0)} \dd t  \leq \frac12 \rho \int_{\frac{\tau_{j-1}+\tau_j}{2}}^{\tau_{j}} \sum_{\ell \geq \beta_0 K(t)^{-1}} \bar\mu(\ell)  \dd t \,,
\end{equation*}
where the inequality holds because \((2\ell+1)\P(W_t=0) = (2\ell+1) \beta_0^{-1} K(t) \geq 2\).
We therefore get that \(\bbE_{\tau}[F^{(j)}]\leq \frac12 \bbE[F^{(j)}]\), which plugged in~\eqref{eq:shiftF} gives that for \(\eta\) sufficiently small
\[
\bbE_{\tau}\Big[ e^{\eta (F^{(j)}-\bbE[F^{(j)}]) } \Big] \leq e^{ -\frac14 \eta \bbE[F^{(j)}]}  \,.
\]
Going back to~\eqref{Chernov2}, we therefore get
\[
\bbP_{\tau}(\cA_1) \leq e^{2 \eta^2 \rho \log T} \exp\Big(-\frac14 \sum_{j=1}^{\cN_{T/2}} \bbE[F^{(j)}]\Big) \leq e^{2 \eta^2 \rho \log T - `\frac{c}{4} \eta \delta \rho \log T  }
\]
where the last inequality holds on the event \(B\), recalling that \(\bbE[F^{(j)}] \geq c \rho \frac{\Delta\tau_j}{1\vee\tau_j}\), see~\eqref{meanFj}.
Taking \(\delta = \eta^{1/2}\) with \(\eta\) small enough shows that \(\bbP_{\tau}(\cA_1) \ind_B \leq e^{- c' \eta^{3/2} \rho \log T }\), giving the desired bound.
\end{proof}

\begin{proof}[Proof of~\eqref{eq:polPA} for \(\bQ_{\beta_0,T}(B^\cc)\)]
Since~\(B\) depend only on \(\tau\cap [0,T/2]\), we can again use \Cref{lem:removecond} to remove the conditioning, at the expense of a harmless multiplicative constant \(C\).
Recall that \(\bQ_{\beta_0}=\bQ\): we need to show that \(\bQ(B^{\cc}) \leq T^{-c_0 \delta}\).
Letting $R_t:=\min (\tau \cap [t,\infty))$ denote the next renewal point after $t$, notice that
\begin{equation*}
 \sum_{j=1}^{\cN_{T/2}} \frac{\Delta \tau_j}{1\vee \tau_j} = \int^{\infty}_0 \frac{\ind_{\{R_t\le T/2\}}}{1\vee R_t} \dd t \,.
\end{equation*}
Then, for \(k\geq 1\), define the stopping time $S_k$ and the event $D_k$ as follows
\[
S_k:= \inf \{ j : \Delta \tau_j\ge 2^{k}\}  \text{ and } D_k:= \{ \Delta \tau_{S_k} \le   2^{k+1} \} \,.
\]
Note that the events $D_k$ are independent under $\bQ$, and that we have
\(D_k\subset \{ \forall t\in [2^{k-1},2^k],\,  R_t\le 4 t\}\).
As a result we have 
\[
\int^{\infty}_0 \frac{\ind_{\{R_t\le T/2\}}}{1\vee R_t} \dd t \ge   \sum_{k=1}^{\log_2 (\frac{T}{8})}\left(\int^{2k}_{2^{k-1}}\frac{\dd t}{4t} \ind_{D_k}\right) \ge \frac{\log 2}{4} \sum_{k=1}^{\log_2 (\frac{T}{8})} \ind_{D_k} \,.
\]
Now, since \(\bQ( D_k) = \bQ(\tau_1\leq 2^{k+1} \mid \tau_1 \geq 2^{k})\) we get that \(\lim_{k\to\infty} \bQ(D_k) = 1-2^{-\alpha}\). In particular 
\[
\sum_{k=1}^{\log_2 (\frac{T}{10})} \bQ(D_k) \stackrel{T\to\infty}{\sim} c_{\alpha} \log T \,.
\]
Therefore, provided that \(\delta\) has been fixed small enough and \(T\) is sufficiently large, we get that 
\[
\bQ(B^{\cc}) \leq \bQ\bigg(\sum_{k=1}^{\log_2 (\frac{T}{10})} \ind_{D_k} \leq \frac{3\delta}{\log 2} \log T\bigg) \leq \bQ\bigg(\sum_{k=1}^{\log_2 (\frac{T}{10})} (\ind_{D_k} - \bQ(D_k)) \leq -  \frac12 c_{\alpha} \log T\bigg) \,.
\]
Applying Hoeffding's inequality, one concludes that \(\bQ(B^{\cc}) \leq e^{- c \log T}\), as desired.
\end{proof}

\section{Fractional moment, coarse-graining and change of measure}
\label{sec:method}

We explain in this section the method that we use to prove  that $\beta_c(\rho)>\beta_0$ and derive lower bounds on $\beta_c(\rho) - \beta_0$ (or \(\gb_c(\rho)/\gb_0\)).
The idea introduced in \cite{DGLT09} is by now classical and has been first implemented for the RWPM in~\cite{BS10}.
Our approach is similar to that of~\cite{BS10}, but we provide the details for completeness.

\subsection{The fractional moment and coarse-graining method}

We let $T>0$ be a fixed real number and consider the (free)partition function of a system who whose length is an integer multiple of~$T$.
Using Jensen's inequality, we obtain that for any $\theta\in (0,1)$
\begin{equation}
  \label{fracmomo}
 \tf(\beta,\rho)
   = \lim_{n\to \infty} \frac{1}{\theta nT}\bbE \left[\log (Z^{Y}_{\beta,nT})^{\theta}\right] \le \liminf_{n\to \infty} \frac{1}{\theta n T} \log \bbE\left[(Z^{Y}_{\beta,nT})^{\theta} \right] \,.
\end{equation}
The value of $\theta$ is mostly irrelevant for our proof, but must satisfy $(1+\alpha)\theta>1$ with \(\alpha = \frac{\gamma}{1-\gamma}\) from~\eqref{formkt} (for instance one may take $\theta=(1+\alpha)^{-1/2}$).
Note that we need here to take the fractional moment \(\bbE[Z^{\theta}]\) instead of the truncated moment \(\bbE[Z\wedge 1]\) as in Section~\ref{sec:upper}, because we want to exploit a quasi-multiplicative structure of the model, which does not behave well with truncations.
Concerning the value of $T$, we consider it to be equal to $1/\tf(\beta)$ ,which corresponds to the \textit{correlation length} of the annealed system.
We want to prove that $\tf(\rho,\beta)=0$, for some values of $\beta$ and $\rho$.

Hence in view of \eqref{fracmomo} it is sufficient to show that for these values of $\rho$,  $\bbE[(Z^{Y}_{\beta,nT})^{\theta}]$ is bounded uniformly in $n$.
For this, we perform a coarse-graining procedure.
We divide the system into segments of length \(T\) of the form $[(i-1)T,iT]$, which we refer to as \textit{blocks}, and we decompose the partition function according to the contribution of each block.
More precisely, we split the integral~\eqref{goodexpress} according to the set of blocks visited by $\{t_1,\dots,t_k\}$.
For an arbitrary $k\geq 0$ and $\bt \in \cX_k(nT)$, we define $I(\bt)$ the set of blocks visited by $\bt$, that is
\begin{equation*}
 I(\bt) = \Big\{ i \, : \,  \{t_1,\dots, t_k\} \cap ((i-1)T,iT] \ne \emptyset\Big\}
\end{equation*}
Then, letting $I$ encode the set of visited blocks, we can write
\begin{equation}\label{decompz}
  Z^{Y}_{\beta,nT} =: \sum_{I\subset \lint n \rint } Z^{Y}_{\beta,T,I},
\end{equation}
where $Z^{Y}_{\beta,T,\emptyset}:= (\beta/\beta_0) K_w(0,nT,Y)$ and for $|I|\ge 1$, $Z^{Y}_{\beta,T,I}$ is obtained by restricting the integrals \eqref{goodexpress} to the sets
\[
\cX_k(T,I):= \Big\{ {\bf t} \in\bigcup_{k\ge 0} \cX_k(nT) \ : \ I({\bf t})=I \Big\} \,,
\]
that is
\begin{equation*}
Z^{Y}_{\beta,T,I}:=\sum^{\infty}_{k=0} \left(\beta/\beta_0 \right)^{k} \int_{\cX_k(T,I)}\prod_{i=1}^k  K_w(t_{i-1},t_i,Y) \dd t_i
\end{equation*}
Let us now rewrite the above expression in a more explicit way. 
Integrating over all $t_i$ within a block except for the first one, we obtain that for $I=\{i_1,\dots, i_{\ell}\}$ with $1\leq i_1 < \cdots <i_{\ell} $, setting $s_0=0$ by convention we have
\begin{equation}
\label{blockdecompo}
Z^{Y}_{\beta,T,I}=   \!\!\!\!\! \int\limits_{ \substack{ (r_{j},s_{j})_{j=1}^{\ell} \\ (i_j-1)T < r_j\le s_j\le i_{j}T } } \!\!\!\!\!   \Big(\frac{\beta}{\beta_0}\Big)^{\ell} \prod_{j=1}^{\ell} K_w(s_{{j-1}},r_{j},Y) Z^{Y}_{\beta,[r_{j},s_{j}]}\dd r_j(\delta_{r_j}(\dd s_j)+ \dd s_j) \,,
\end{equation}
where for $r< s$, we have defined the constrained partition function on the segment \([r,s]\) by setting 
\begin{equation}\label{partisegment}
 Z^{Y}_{\beta,[r,s]}:= \beta \bE\left[ e^{\beta \int^s_r  \ind_{\{X_{t}=Y_t\}} \dd t}  \ind_{\{X_{s}=Y_s\}} \ | \ X_r=Y_r\right] 
\end{equation}
and set  $Z^{Y}_{\beta,[s,s]}=1$.
Note that in~\eqref{blockdecompo}, the Dirac mass terms $\delta_{r_i}(\dd s_i)$ is present to take into account the possibility that a given block is visited only once. 
To estimate $\bbE[(Z^{Y}_{\beta,nT})^{\theta}]$,
we combine \eqref{decompz}, together with the inequality $\left(\sum a_i\right)^{\theta}\le \sum a^{\theta}_i$ (valid for any collection of positive numbers) and obtain the following upper bound
\begin{equation}
  \label{coarsegraining}
\bbE\left[ (Z^{Y}_{\beta,nT})^{\theta}\right]\le \sum_{I\subset \lint n \rint } \bbE\left[\left(Z^{Y}_{\beta,T,I}\right)^{\theta}\right] \,.
\end{equation}

\subsection{Change of measure argument and further reduction}

The idea behind   \eqref{coarsegraining} is to reduce our proof to an estimate for each visited block in $I$.
For this, we fix a function~$g_I$ of the enriched random environment $\cU$ and we use H\"older's inequality to obtain
\begin{equation}
  \label{eq:Holder}
 \bbE\left[\left( Z^{Y}_{\beta,T,I}\right)^{\theta}\right]= \bbE\left[\left(g_I(\cU) Z^{Y}_{\beta,T,I}\right)^{\theta}g_I(\cU)^{-\theta}\right] \le \bbE\left[ g_I(\cU)  Z^{Y}_{\beta,T,I}\right]^{\theta} \bbE\Big[ g_I(\cU)^{-\frac{\theta}{1-\theta}}\Big]^{1-\theta}.
\end{equation}
We want $g_I$ to penalize the trajectories $Y$ that contribute most to the expectation.
The penalization we introduce is only based the process $\cU$ restricted to the visited blocks.
For this we introduce an event $\cA\in \cF_{[0,T]}$ meant to be a rare set of favorable environment within the first block (the precise requirement will be~\eqref{singlecell}-\eqref{singlecellis} below). 
We then consider a function $g_I$ which penalizes blocks whose environment is favorable, that is
$g_I(\cU)=\prod_{i\in I} g_i(\cU)$
with
\begin{equation}
  \label{def:g}
 g_i(\cU)= g(\theta_{iT}\cU)  \quad \text{ and } \quad  g:= \ind_{\cA^{\cc}} + \eta \ind_{\cA} \quad
\end{equation}
where $\theta_{iT} \cU:= \cU-(0,0,iT)$ is the shifted point process and $\eta:= \bbP(\cU\in\cA)^{\theta/(1-\theta)}$ (the value of \(\eta\) is chosen for convenience, see \eqref{eq:convenience} just below).
Note that because $\cA\in \cF_{[0,T]}$, the variables $g_{i}(\cU)$ are i.i.d.\
In particular, thanks to the definition of \(\eta\),  we directly have that, for any $I$,
\begin{equation}
  \label{eq:convenience}
  \bbE\left[ g_I(\cU)^{-\frac{\theta}{1-\theta}}\right]^{1-\theta} = \bbE\left[g(\cU)^{-(1-\theta)/\theta}\right]^{(1-\theta)|I|} =  \big( \bbP(\cU\in\cA^{\cc}) +1 \big)^{ (1-\theta)|I|} \leq 2^{|I|}\,.
\end{equation}
Hence, thanks to~\eqref{eq:Holder}, the inequality~\eqref{coarsegraining} becomes
\begin{equation}
  \label{coarsegraining2}
  \bbE\left[ (Z^{Y}_{\beta,nT})^{\theta}\right]\le \sum_{I\subset \llbracket n \rrbracket} 2^{|I|} \bbE\left[ g_I(\cU) Z^{Y}_{\beta,T,I}\right]^{\theta} \,.
\end{equation}
From now on, for simplicity, let us write $G_I:=g_{I}(\cU)$, $G_i:=g_i(\cU)$ and $G:=g(\cU)$.
Using the block decomposition~\eqref{blockdecompo} and Fubini's theorem, we have
\begin{equation}\label{struck2}
\bbE\left[ G_I Z^{Y}_{\beta,T,I}\right]
 \le  \!\!\!\!\!
\int\limits_{ \substack{ (r_{j},s_{j})_{j=1}^{\ell} \\ (i_j-1)T < r_j\le s_j\le i_{j}T } } \!\!\!\!\! \Big(\frac{\beta}{\beta_0}\Big)^{\ell} \bbE\left[ \prod_{j=1}^{\ell} K_w(s_{{j-1}},r_j,Y) G_{i_j}  Z^{Y}_{\beta,[r_{j},s_{j}]}\dd r_j(\delta_{r_j}(\dd s_j)+ \dd s_j) \right].
\end{equation}
Since the $(G_{i_j}  Z^{Y}_{\beta,[r_{j},s_{j}]})^{\ell}_{j=1}$ are independent, it may be convenient to replace  $\prod_{j=1}^{\ell+1} K_w(s_{{j-1}},r_j,Y)$ in the expectation above by a deterministic upper bound in order to factorize of the expectation.
Using \Cref{lem:unimod}, we have for any $\rho\in (0,1/2)$
\[
\frac{K_w(s,t,Y)}{K(t-s)} = \frac{ \bP(X_{t-s} = Y_t-Y_s)}{\P(W_{t-s}=0)}\leq  \frac{\P(W_{(1-\rho)(t-s)})}{\P(W_{t-s}=0)
} \leq \sup_{r\ge 0} \frac{K(r/2)}{K(r)}  \,.
\]
Since $K(r/2)/K(r)$ is continuous and converges to $1$ and $2^{1+\alpha}$ at $0$ and $\infty$ the r.h.s.\ is finite. 
Hence there exists some constant $C$ such that for all  $\rho\in (0,1/2)$ and $\beta\in (\beta_0,2\beta_0)$ we have
\begin{equation}
  \label{struck3}
\bbE\left[ g_I(Y) Z^{Y}_{\beta,T,I}\right] \leq C^{\ell}\!\!\!\!\!
\int\limits_{ \substack{ (r_{j},s_{j})_{j=1}^{\ell} \\ (i_j-1)T < r_j\le s_j\le i_{j}T } }\!\!\!\!\!  \prod_{j=1}^{\ell} K(r_j-s_{{j-1}}) \bbE\left[G_{i_j}  Z^{Y}_{\beta,[r_{j},s_{j}]}\right]\dd r_j(\delta_{r_j}(\dd s_j)+ \dd s_j) \,.
\end{equation}
Let us stress that while the a variant of \eqref{struck3} may be valid for $\rho$ close to $1$, it would involve a constant $C$ that depends on $\rho$. 
For this reason, to prove \Cref{prop:largerho} we rely on~\eqref{struck2} and use another trick to perform factorization. For all other results we use \eqref{struck3}.
In all cases, the main task is to chose an event $\cA$ (recall the definition \eqref{def:g} of \(g\)) which has small probability but makes  but that carries most of the expectation of $Z^{Y}_{\beta,[r,s]}$ for most choices of $r$ and $s$ in the intervals considered.

Let us now explain how one can evaluate $\bbE[G_{i_j}  Z^{Y}_{\beta,[r_{j},s_{j}]}]$, we also apply the same idea for the expectation present in \eqref{struck2}.
By translation invariance it is sufficient to consider the case of \(\bbE[G Z^{Y}_{\beta,[r,s]}]\).
Taking the convention $t_0=r$, $t_{k+1}=s$ and recalling~\eqref{goodexpress} and the definition~\eqref{def:w} of \(w(s,t,Y)\), we have
\begin{equation*}
 \bbE\big[G Z^{Y}_{\beta,[r,s]}\big]=  \sum^{\infty}_{k=0} \left(\beta/\beta_0 \right)^{k+1}  \int_{\cX_k([r,s])}\bbE\Big[ G \prod_{i=1}^{k+1} w (t_{i-1},t_i,Y)  \Big]   \prod_{i=1}^{k+1} K(t_i-t_{i-1})\prod_{i=1}^k\dd t_i.
\end{equation*}
Recalling also the definition~\eqref{def:Pt} of the weighted measure \(\bbP_{\bt}\), we can simply rewrite 
\[
\bbE\Big[ G \prod_{i=1}^{k+1} w (t_{i-1},t_i,Y) \Big] = \bbE_{\bt}[G] \,,
\] 
where $\bt = (t_i)_{0\leq i\leq k+1}$ with $t_0=r$ and $t_{k+1} =s$.
We can now interpret the above expression as the partition function of a pinning model based on the renewal process~\(\tau\) introduced in Section~\ref{sec:annealed}.
Let $\bQ_{[r,s]}$ be the law of the renewal process \(\tau\) with pinned boundary condition \(r,s \in \tau\). 
More precisely, \(\bQ_{[r,s]}\) is the probability on $\bigsqcup_{k=0}^{\infty} \{r\}\times\cX_k([r,s])\times\{s\}$, whose density on $\{r\}\times\cX_k([r,s])\times\{s\}$ w.r.t.\ the Lebesgue measure is given by
$u(s-r)^{-1} \prod_{i=1}^{k+1} K(t_i-t_{i-1})$, which corresponds to the law of \(\tau\cap [r,s]\) under \(\bQ(\cdot \mid r,s\in \tau)\).
We then have that 
\begin{equation}
  \label{replaceterms}
  \begin{split}
  \bbE[G Z_{\gb,[r,s]}^{Y}]  &= u(s-r) \bQ_{[r,s]}\Big[ (\gb/\gb_0)^{|\tau|}\bbE_{\tau}[G]  \Big] \\
  & \le  u(s-r) \bQ_{[r,s]}\Big[ (\gb/\gb_0)^{2|\tau|}\Big]^{1/2} \bQ_{[r,s]} \big[ \bbE_{\tau}[G]^2 \big]^{1/2} \,.
  \end{split}
\end{equation}
The second line is obtained using Cauchy-Schwarz and its objective is to decouple the effect of $G$ and that of the pinning reward. 
Now, simply writing \(\beta' =\beta^2/\beta_0\) and recalling~\eqref{eq:zpinning}, we have that 
\(
\bQ_{[r,s]}[ (\gb/\gb_0)^{2|\tau|}] = z^{\cons}_{\beta',r-s} / u(r-s)\,.
\)
Since by assumption $s-r\le T =\tf(\beta)^{-1} \leq C \tf(\gb')^{-1}$, we get from~\eqref{eq:homogeneous} (or \cite[Lemma~3.1]{BLirrel}) that this is bounded by a constant.
All together, we deduce from~\eqref{replaceterms} that
\begin{equation}\label{cku}
   \bbE\big[G Z_{\gb,[r,s]}^{Y}\big] \le C   u(s-r) \bQ_{[r,s]}\big[ \bbE_{\tau}[G]^2 \big]^{1/2}.
\end{equation}

\subsection{Finite-volume criterion and good choice of event \texorpdfstring{$\cA$}{A}}

Let us now provide a finite-volume criterion that ensures that \(\tf(\beta,\rho)=0\) in terms of the existence of an event \(\cA\) with specific properties.
Recall that we have fixed $T:=\tf(\beta)^{-1}$.
We say that an event $\cA\in \cF_{[0,T]}$ is $\gep$-good
if it satisfies the following:
\begin{equation}
  \label{singlecell}
  \begin{split}
   &\bbP(\cA)\leq \gep \,, \\
\forall (r,s) \subset [0,T]^2,  \quad &  \left( s-r\ge \gep T\right) \  \Rightarrow  \  \bQ_{[r,s]}\big(  \bbP_{\tau}(\cA^{\cc})\big)\le \gep\,.
  \end{split}
\end{equation}

\begin{proposition}
  \label{prop:coarse-graining}
There exists $\gep>0$ such that for any \(\rho \in (0,\frac12]\) and $\beta\in [\beta_0,2\beta_0]$, the existence of some $\gep$-good event implies that $\tf(\beta,\rho)=0$.
\end{proposition}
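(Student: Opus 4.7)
The plan is to use the existence of an $\varepsilon$-good event $\cA$ (for $\varepsilon>0$ small enough) to bound the fractional moment $\bbE[(Z^Y_{\beta,nT})^\theta]$ uniformly in $n$; by \eqref{fracmomo}, this yields $\tf(\beta,\rho)=0$. My starting point is the chain of bounds already built in this section: combining \eqref{coarsegraining2}, \eqref{struck3} and the Cauchy--Schwarz step \eqref{cku} with the choice $G=\ind_{\cA^{\cc}}+\eta\ind_{\cA}$ where $\eta:=\bbP(\cA)^{\theta/(1-\theta)}$, I obtain
\begin{equation*}
\bbE\big[(Z^Y_{\beta,nT})^\theta\big] \le \sum_{I\subset\lint n\rint}(2C^\theta)^{|I|}\bigg(\int\prod_{j=1}^{|I|}K(r_j-s_{j-1})\,u(s_j-r_j)\,B(r_j,s_j)\,dr_j(\delta_{r_j}(ds_j)+ds_j)\bigg)^\theta,
\end{equation*}
with $B(r,s):=\bQ_{[r,s]}[\bbE_\tau[G]^2]^{1/2}$ and the integration over $(r_j,s_j)\in((i_j-1)T,i_jT]^2$ with $r_j\le s_j$.

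The key step is to use the $\varepsilon$-good property of $\cA$ to bound $B(r,s)$ on long intervals. From $0\le G\le 1$ and $G\ge\eta\ind_{\cA}$, one has pointwise $\bbE_\tau[G]^2\le 2\bbP_\tau(\cA^{\cc})+2\eta^2$; applying $\bQ_{[r,s]}$ together with the second line of \eqref{singlecell} gives $B(r,s)^2\le 2\varepsilon+2\eta^2$ whenever $s-r\ge\varepsilon T$. Choosing $\theta$ with $\theta\ge 1/3$ (compatibly with the standing constraint $(1+\alpha)\theta>1$) ensures $\eta^2\le\varepsilon$, so that $B(r,s)\le 2\sqrt{\varepsilon}$ on long intervals, while the trivial bound $B(r,s)\le 1$ applies in general.

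The inner integral has a Markovian product structure (coupling between successive blocks only through $K(r_j-s_{j-1})$). Taking the supremum over the exit point $s_{j-1}$ in the previous visited block decouples the blocks, giving $\int\cdots\le\prod_j\widetilde{A}_{i_j-i_{j-1}}$ where $\widetilde{A}_m$ denotes the worst-case one-block weight for a gap of $m$ blocks. Plugging this in and summing the resulting geometric series, I obtain
\begin{equation*}
\bbE\big[(Z^Y_{\beta,nT})^\theta\big]\le\sum_{\ell\ge 0}\bigg((2C^\theta)\sum_{m\ge 1}\widetilde{A}_m^\theta\bigg)^\ell,
\end{equation*}
which is bounded uniformly in $n$ provided $(2C^\theta)\sum_{m}\widetilde{A}_m^\theta<1$.

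The main obstacle is showing that $\sum_{m\ge 1}\widetilde{A}_m^\theta$ can be made arbitrarily small as $\varepsilon\to 0$. Splitting each $\widetilde{A}_m$ according to $s-r\gtrless \varepsilon T$, the ``large-interval'' contribution gains a factor $2\sqrt{\varepsilon}$ from $B$ multiplying an annealed one-block weight, while the ``small-interval'' contribution is controlled by the smallness of its integration domain together with the local integrability of $K$ (which is bounded by $\beta_0$) and $u$. For $m\ge 2$, the factor $K(r_j-s_{j-1})$ is essentially bounded by $K((m-1)T)$, and its polynomial decay in $m$ together with the summability exponent $(1+\alpha)\theta>1$ makes the tail of $\sum_m\widetilde{A}_m^\theta$ convergent; for $m=1$, we compare the full annealed one-block integral to the constrained annealed partition function $z^{\cons}_{\beta_0,T}$, itself controlled by \eqref{eq:homogeneous}. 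Combining these, one shows $\sum_m\widetilde{A}_m^\theta\le C_0\varepsilon^{c_0}$ for some $C_0,c_0>0$ depending on $\beta$ but not on $\varepsilon$, which is smaller than $1/(2C^\theta)$ for $\varepsilon$ sufficiently small, concluding the proof.
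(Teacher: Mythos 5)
The first half of your argument matches the paper: the combination of \eqref{coarsegraining2}, \eqref{struck3} and \eqref{cku}, the choice of $g$, and the bound $\bQ_{[r,s]}[\bbE_\tau[G]^2]\le \bbP_\tau(\cA^{\cc})$-plus-$\eta^2$ using the second line of \eqref{singlecell} are all correct. The gap is in the decoupling step. Bounding the chained integral by $\prod_j \widetilde A_{i_j-i_{j-1}}$ via ``taking the supremum over the exit point $s_{j-1}$'' does not produce a usable per-block weight. For consecutive blocks ($m=1$) the supremum over $s_{j-1}$ is attained as $s_{j-1}\uparrow i_{j-1}T$, in which case $K(r_j-s_{j-1})$ is bounded only by $\beta_0$, and one finds $\widetilde A_1 \gtrsim \int_0^{T} K(r)\bigl(1+\int_0^{T-r}u(v)\,dv\bigr)\,dr \asymp \int_0^T u(v)\,dv$. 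For $\alpha\in(0,1)$ this is of order $T^{\alpha}/L(T)$, which diverges with $T$; the factor $2\sqrt\gep$ does not fix this, and the delta-mass (single-visit) contribution is of order $1$ regardless. Since $T=\tf(\beta)^{-1}$ must be taken arbitrarily large as $\beta\downarrow\beta_0$, the claim $\sum_m\widetilde A_m^\theta\le C_0\gep^{c_0}$ with $\gep$-independent constants cannot hold. Relatedly, ``smallness of the integration domain'' does not control the short-interval part: $\int_0^{\gep T}u(v)\,dv$ grows with $T$. What the paper actually uses (Eq.\ \eqref{eq:iterationstep}) is a \emph{relative} bound, $\int_{\{s-r\le\gep T\}}\cdots\le C\gep^{\alpha\wedge 1}\int_{\{0<r<s<T\}}\cdots$, so that the small-interval contribution is small compared to the full annealed one-block integral, not small in absolute terms.

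The correct bookkeeping is therefore to factor the per-block gain $\delta=C(\gep^{\alpha\wedge 1}+\gep^{1/2}+\eta)$ out of the product and treat the remaining unrestricted chained integral $P_T(I)$ separately: one proves $P_T(I)\le C_T\prod_j C/(i_j-i_{j-1})^{1+\alpha/2}$ with a \emph{single} $T$-dependent prefactor $C_T$, via the inductive argument of \cite[Eq.\ (6.5)]{BS11} and \cite[Lemma 2.4]{GLT11} (integrating out the blocks from the last one backward, treating $i_j-i_{j-1}=1$ and $\ge 2$ separately, and using Potter's bounds). The $T$-dependence must appear once, not once per block, because the final estimate is $\bbE[(Z^Y_{\beta,nT})^\theta]\le 2C_T$ and one then divides by $nT$ and lets $n\to\infty$. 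Your comparison to $z^{\cons}_{\beta_0,T}\asymp u(T)$ for the $m=1$ case also does not bridge this gap, since the free one-block integral is of order $\int_0^T u$ whereas $u(T)$ is of order $T^{\alpha-1}/L(T)$, and their ratio is of order $T$. A minor remark: your additional constraint $\theta\ge 1/3$ is harmless but unnecessary; the paper absorbs $\eta=\gep^{\theta/(1-\theta)}$ into $\delta$ and only needs $\delta\to 0$ as $\gep\to 0$, which holds for any fixed $\theta\in(0,1)$.
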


For the case \(\rho \in (\frac12,1)\), we need to include in the definition of \(\gep\)-goodness an additional requirement that will allow for  factorization. 
We say that an event $\cA\in \cF_{[0,T]}$ is $\gep$-better
if it satisfies the following:
\begin{equation}\label{singlecellis}
  \begin{split}
   &\bbP(\cA)\leq \gep \,, \\
\forall (r,s) \subset [0,T]^2,  \quad  &\left( s-r\ge \gep T\right) \  \Rightarrow  \  \bQ_{[r,s]}\big(  \bbP_{\tau}(\cA^{\cc}  \ | \ \cF_{\bbR\setminus[r,s]}) \big)\le \gep.
  \end{split}
\end{equation}

\begin{proposition}
\label{prop:coarse-graining2}
There exists $\gep>0$ such that for any \(\rho \in (0,1)\) and $\beta\in [\beta_0,2\beta_0]$, the existence of some $\gep$-better event implies that $\tf(\beta,\rho)=0$.
\end{proposition}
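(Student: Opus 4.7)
The plan is to push through the same fractional-moment / coarse-graining / change-of-measure scheme as in \Cref{prop:coarse-graining}, but avoiding \eqref{struck3} (which relies on $K_w(s,t,Y)\leq CK(t-s)$, a bound whose constant blows up as $\rho\to 1$) by starting from \eqref{struck2} instead. The factorization across visited blocks is then obtained by conditioning on the Poisson structure of $\cU$ outside the ``visited interiors''.

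Fix a visited set $I=\{i_1<\dots<i_\ell\}\subset\lint n\rint$ and a skeleton $(r_j,s_j)_{j=1}^\ell$, and let $\cF^{\mathrm{out}}$ denote the $\sigma$-algebra generated by $\cU$ restricted to $[0,nT]\setminus\bigsqcup_{j=1}^\ell(r_j,s_j]$. Three elementary measurability observations: each $K_w(s_{j-1},r_j,Y)$ depends only on $\cU|_{(s_{j-1},r_j]}$ and is $\cF^{\mathrm{out}}$-measurable; $Z^Y_{\beta,[r_j,s_j]}$ depends only on $\cU|_{(r_j,s_j]}$; and $G_{i_j}=g(\theta_{i_jT}\cU)$ depends on $\cU|_{[(i_j-1)T,i_jT]}$, whose ``fringe'' pieces $[(i_j-1)T,r_j]\cup(s_j,i_jT]$ are fixed once $\cF^{\mathrm{out}}$ is given. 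Since $\cU$ is Poisson and disjoint strips are independent, conditioning on $\cF^{\mathrm{out}}$ makes the products $G_{i_j}Z^Y_{\beta,[r_j,s_j]}$ conditionally independent across $j$, giving
\[
\bbE\Bigl[\prod_{j=1}^\ell K_w(s_{j-1},r_j,Y)\,G_{i_j}\,Z^Y_{\beta,[r_j,s_j]}\Bigr]=\bbE\Bigl[\prod_j K_w(s_{j-1},r_j,Y)\cdot\prod_j \bbE\bigl[G_{i_j}Z^Y_{\beta,[r_j,s_j]}\bigm|\cF^{\mathrm{out}}\bigr]\Bigr].
\]
The task then reduces to a single-block estimate: for each $j$ with $s_j-r_j\geq\gep T$, I aim to show that pointwise in the realization of $\cF^{\mathrm{out}}$,
\[
\bbE\bigl[G_{i_j}Z^Y_{\beta,[r_j,s_j]}\bigm|\cF^{\mathrm{out}}\bigr]\leq C(\sqrt{\gep}+\eta)\,z^{\cons}_{\beta,s_j-r_j},
\]
while for $s_j-r_j<\gep T$ the trivial bound $g\leq 1$ gives $\leq z^{\cons}_{\beta,s_j-r_j}$. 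Expanding $Z^Y_{\beta,[r_j,s_j]}$ as a renewal sum and moving to the size-biased viewpoint via \eqref{eq:martingale}, the conditional expectation reads as $z^{\cons}_{\beta,s_j-r_j}$ times an average under $\bQ_{\beta,[r_j,s_j]}$ of $\bbE_\tau[g\mid\cF_{\bbR\setminus[r_j,s_j]}]\leq\bbP_\tau(\cA_{i_j}^{\cc}\mid\cF_{\bbR\setminus[r_j,s_j]})+\eta$, and this is precisely the conditional quantity controlled by the $\gep$-\emph{better} assumption (after a bounded switch $\bQ_\beta\to\bQ$ via Cauchy--Schwarz and \eqref{eq:homogeneous}).

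Substituting back, one obtains
\[
\bbE[G_I Z^Y_{\beta,T,I}]\leq\bigl(C(\sqrt{\gep}+\eta)\bigr)^{\ell_I}\!\!\int\prod_{j=1}^\ell(\beta/\beta_0)\,K(r_j-s_{j-1})\,z^{\cons}_{\beta,s_j-r_j}\,\dd r_j\bigl(\delta_{r_j}(\dd s_j)+\dd s_j\bigr),
\]
with $\ell_I\leq|I|$ counting the ``large'' blocks. The remaining integral is a standard renewal-theoretic object, bounded by an $|I|$-independent constant using \eqref{eq:homogeneous} and the renewal equation. Raising to the power $\theta$ and summing over $I$ with the $2^{|I|}$ weight from \eqref{coarsegraining2}, choosing $\gep$ small enough that $2(C(\sqrt{\gep}+\eta))^\theta<1$ gives $\bbE[(Z^Y_{\beta,nT})^\theta]$ uniformly bounded in $n$, hence $\tf(\rho,\beta)=0$ by \eqref{fracmomo}. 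The hard step is the pointwise-in-$\cF^{\mathrm{out}}$ single-block bound: the conditional form of the $\gep$-better hypothesis is essential here, because the factorization across blocks does \emph{not} permit averaging the $\cF^{\mathrm{out}}$-dependence away; this is the technical reason the $\gep$-good condition suffices for \Cref{prop:coarse-graining} (Cauchy--Schwarz via \eqref{cku} having already moved the analysis to a $\bQ_{[r,s]}$-averaged statement), but must be strengthened to $\gep$-better here.
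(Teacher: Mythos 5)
Your strategy coincides with the paper's: start from \eqref{struck2}, factorize by conditioning on the Poisson process outside the visited sub-intervals (the paper conditions block-by-block on $\cF_{\bbR_+\setminus[r_j,s_j]}$, which by Poisson independence on disjoint strips is equivalent to your global conditioning on $\cF^{\mathrm{out}}$), then apply the conditional analogue of \eqref{replaceterms} plus the $\gep$-better hypothesis, and finally run the same iteration as in \Cref{prop:coarse-graining}. You have also correctly isolated the reason why $\gep$-good no longer suffices.

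There is, however, a sloppiness at the very end that you should fix. In the display $\bbE[G_IZ^Y_{\beta,T,I}]\le (C(\sqrt\gep+\eta))^{\ell_I}\int\prod_j(\beta/\beta_0)K(r_j-s_{j-1})z^{\cons}_{\beta,s_j-r_j}\,\dd r_j(\cdots)$, the quantity $\ell_I$ (the number of ``large'' visited intervals) depends on the integration variables $(r_j,s_j)$ and so cannot be pulled in front of the integral; the correct form keeps $[\ind_{\{s_j-r_j\le\gep T\}}+C(\sqrt\gep+\eta)]$ \emph{inside} the integral as in \eqref{eq:boundPI}. Moreover, the remaining integral is \emph{not} bounded by an $|I|$-independent constant: as in the paper it decays like $\prod_j(i_j-i_{j-1})^{-(1+\alpha/2)}$, and this gap decay is essential — if the integral were truly a constant then $\sum_{I\subset\lint n\rint}2^{|I|}(C(\sqrt\gep+\eta))^{\theta\ell_I}$ would include the $\binom{n}{\ell}$ terms with all visited intervals short ($\ell_I=0$) and would diverge in $n$. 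You still need the two ingredients that close the paper's iteration: \eqref{eq:iterationstep} to convert the restriction $\{s_j-r_j\le\gep T\}$ into a per-block factor $C\gep^{\alpha\wedge1}$, and the gap-decay estimate on $P_T(I)$ to make $\sum_{i\ge1}C\delta\,i^{-(1+\alpha/2)\theta}<\tfrac12$ the summability criterion. With those two steps restored, your argument coincides with the paper's proof.
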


\begin{proof}[Proof of Proposition~\ref{prop:coarse-graining}]
Let us assume that $\cA$ in the construction~\eqref{def:g} of $g$ is $\gep$-good.
If we combine~\eqref{struck3} and \eqref{cku}, we have for some $C>0$ that \(\bbE[ G_I Z^{Y}_{\beta,T,I}]\) is bounded by
\begin{equation}\label{srhg}
(C)^{\ell} 
\!\!\!\!\! \int\limits_{ \substack{ (r_{j},s_{j})_{j=1}^{\ell} \\ (i_j-1)T < r_j\le s_j\le  i_{j}T } } \!\!\!\!\!  \prod_{j=1}^{\ell} K(r_j-s_{{j-1}})u(s_j-r_j) \bQ_{[r_{j},s_j]} \Big[\bbE_{\tau}[G_{i_j}]^2 \Big]^{1/2}\dd r_j(\delta_{r_j}(\dd s_j)+ \dd s_j) .
\end{equation}
Now, recalling the definition~\eqref{def:g} of \(g\), the \(\gep\)-good assumption~\eqref{singlecell} implies that 
\begin{equation*}
  \bQ_{[r_j,s_j]} \Big[ \bbE_{\tau}[G_j]^2 \Big]^{1/2}  \leq  \bQ_{[r_j,s_j]} \Big[ \bbE_{\tau}\big[ G_j^2\big] \Big]^{1/2} \le \ind_{\{s_j-r_j\le \gep T\}}+ \left(\gep+\eta^2\right)^{1/2}\,,
\end{equation*}
with \(\eta \leq \gep^{\theta/(1-\theta)}\).
Now, using the regular variation of \(K(\cdot)\) and \(u(\cdot)\), see~\eqref{formkt} and \eqref{renewalr}-\eqref{renewal}, we see that there is a constant \(C>0\) such that, for any \(a < 0\) and \(b\geq T\) and any \( \gep\in (0,1)\)
\begin{equation}
  \label{eq:iterationstep}
  \begin{split}
  \int\limits_{ \substack{ 0<r<s<T \\ s-r \leq \gep T}} K(r-a)u(s-r) K(b-s) \dd r \dd s 
  & \leq C \gep^{\alpha\wedge 1} \int\limits_{0<r<s<T } K(r-a)u(s-r) K(b-s) \dd r \dd s \\
  \int\limits_{ \substack{ 0<r<s<T \\ s-r \leq \gep T}} K(r-a)u(s-r)\dd r \dd s 
  & \leq C \gep^{\alpha\wedge 1} \int\limits_{0<r<s<T} K(r-a)u(s-r) \dd r \dd s \,.
  \end{split}
\end{equation}
The proof is left to the reader (it follows that of \cite[Equation~(6.7)]{BS11}).
Hence, going back to~\eqref{srhg} and applying~\eqref{eq:iterationstep}, we get that 
\begin{equation}
  \label{eq:boundPI}
\bbE[ G_I Z^{Y}_{\beta,T,I}] \leq (\delta)^{|I|}  \!\!\!\!\!\int\limits_{ \substack{ (r_{j},s_{j})_{j=1}^{\ell} \\ (i_j-1)T < r_j\le s_j\le  i_{j}T } } \!\!\!\!\!  \prod_{j=1}^{\ell} K(r_j-s_{{j-1}})u(s_j-r_j)\dd r_j(\delta_{r_j}(\dd s_j)+ \dd s_j) \,,
\end{equation}
with \(\delta =\delta (\gep) = C (\gep^{\alpha\wedge 1}+ \gep^{1/2}+\eta)\), for some different constant \(C>0\).
Now, we have that there are constants \(C\), \(C_T\) such that the last integral verifies
\begin{equation*}
P_T(I) := \!\!\!\!\! \int\limits_{ \substack{ (r_{j},s_{j})_{j=1}^{\ell} \\ (i_j-1)T < r_j\le s_j\le  i_{j}T } } \!\!\!\!\!  \prod_{j=1}^{\ell} K(r_j-s_{{j-1}})u(s_j-r_j)\dd r_j(\delta_{r_j}(\dd s_j)+ \dd s_j) \leq  C_T \prod_{j=1}^{\ell} \frac{C}{(i_j-i_{j-1})^{1+\frac{\alpha}{2}}} \,.
\end{equation*}
This follows by a standard iteration exactly as in~\cite[Equation (6.5)]{BS11}, combined with Potter's bound \cite[Thm.~1.5.6]{BGT89}. For the iteration, one needs to treat the cases \(i_j-i_{j-1} \geq 2\) and \(i_{j}-i_{j-1}=1\) separately, similarly as in~\cite[Lemma~2.4]{GLT11} (we skip the details). 
Going back to~\eqref{coarsegraining2},  we get that
\begin{equation*}
  \label{copae}
  \bbE\left[ (Z^{Y}_{\beta,nT})^{\theta}\right]\le C_T \sum_{\ell=0}^{n}  \sum_{0< i_1 <\ldots < i_{\ell}\le n }  \prod_{j=1}^{\ell} \frac{C \delta }{(i_j-i_{j-1})^{(1+\frac\alpha2)\theta}} \leq C_T \sum_{\ell =0}^{\infty} \bigg( \sum_{i=1}^{\infty} \frac{C \delta}{(i)^{\theta(1+\frac{\alpha}{2})\theta}} \bigg)^{\ell}  \,,
\end{equation*}
where for the last inequality we have simply dropped the restriction on $i_{\ell}$.
Therefore, if we have fixed~$\theta$ such that $(1+\frac\alpha2)\theta >1$, we may fix $\gep$ small (hence \(\delta\) small) such that 
\[
\sum_{i=1}^{\infty} \frac{C\delta}{i^{(1+\frac\alpha2)\theta}} \le \frac12 \,.
\]
This implies that  $\bbE[ (Z^{Y}_{\beta,nT})^{\theta}] \leq 2 C_T$ for any \(n\geq 1\), which concludes the proof thanks to~\eqref{fracmomo}.
\end{proof}

\begin{proof}[Proof of Proposition~\ref{prop:coarse-graining2}]
Let us assume that $\cA$ in the construction~\eqref{def:g} of $g$ is $\gep$-better.
In this case we use conditional expectation to perform a factorization.
Setting \(\bbP_{[r,s]} (\cdot) := \bbP( \cdot \mid \cF_{\bbR_+ \setminus [r,s]})\), we have 
\begin{equation}\label{copaee}
\bbE\left[ \prod_{j=1}^{\ell} K_w(s_{{j-1}},r_j,Y) G_{i_j}  Z^{Y}_{\beta,[r_{j},s_{j}]}\right]
\!\! =\bbE\left[ \prod_{j=1}^{\ell} K_w(s_{{j-1}},r_j,Y) \bbE_{[r_j,s_j]} \!\!\left[  G_{i_j} Z^{Y}_{\beta,[r_{j},s_{j}]}\right]\right]
\end{equation}
Now, similarly as in~\eqref{replaceterms}, we obtain that
\[
\bbE_{[r_j,s_j]}\left[ G  Z^{Y}_{\beta,[r_j,s_j]} \right]\le C u (r_j-s_j) \bQ_{[r_j,s_j]}\left[ \bbE_{\tau}[ G_{i_j} \mid \cF_{\bbR_+\setminus [r_j,s_j]}]^2 \right]^{1/2} \,,
\]
and the \(\gep\)-better assumption \eqref{singlecellis} implies that 
\begin{equation*}
\bQ_{[r_j,s_j]}\left[ \bbE_{\tau}[ G_{i_j} \mid \cF_{\bbR_+\setminus [r_j,s_j]}]^2 \right]^{1/2} \le \ind_{\{ s_j-r_j \le \gep T\}}+ (\gep+\eta^2)^{1/2} \,.
\end{equation*}
Injecting this back in \eqref{copaee} yields 
\[ 
\bbE\left[ \prod_{j=1}^{\ell} K_w(s_{{j-1}},r_j,Y) G_{i_j}  Z^{Y}_{\beta,[r_{j},s_{j}]}\right]
 \le  (C)^{\ell} \prod_{j=1}^{\ell} K(r_j-s_{{j-1}}) u(s_j-r_j)\left[\ind_{\{ s_j-r_j \le \gep T\}}+ (\gep+\eta^2)^{1/2}\right]\,.
\]
Using the above in \eqref{coarsegraining2}, we can then proceed exactly as in the previous proof: we use \eqref{eq:iterationstep} to get the same bound as in~\eqref{eq:boundPI} and the proof is then identical.
\end{proof}

\subsection{A statement that gathers them all}

In view of Propositions~\ref{prop:coarse-graining}-\ref{prop:coarse-graining2}, the key to our proof is therefore to find some event satisfying~\eqref{singlecell} (or \eqref{singlecellis} in the case of \Cref{prop:largerho}). 
The choice of the event $\cA$ depends on the parameters, and we collect in the following \Cref{prop:key} all the estimates needed to prove \Cref{prop:largerho}, \Cref{rele1} and \Cref{rele2}.
In the case \(\gamma=\frac23\), we need to introduce some more notation to treat the case of a generic slowly varying function \(\varphi(\cdot)\) in \eqref{JPP}.
Define 
\begin{equation}
  \label{defpsi}
\psi(t) := \varphi\left( \frac{1}{K(t)}\right)^3  \int_1^{\frac{1}{K(t)}} \frac{\dd s}{s \gp(s)^3 } \,,
\end{equation}
which is a slowly varying function. Note that it is easy to see that \(\lim_{t\to\infty} \psi(t) =+\infty\), as proven e.g.\ in \cite[Prop. 1.5.9.a]{BGT89}.
Note also that in the case where \(\gp(t) \stackrel{t\to \infty}{\sim} (\log t)^{\kappa}\), we have
\begin{equation}
  \label{psilog}
\psi(t) \sim c_{\kappa}
\begin{cases}
 (\log t) & \quad \text{ if } \kappa < 1/3 \,, \\
 (\log t) (\log \log t) & \quad \text{ if } \kappa = 1/3 \,, \\
 (\log t)^{3\kappa} & \quad \text{ if } \kappa > 1/3 \,.
\end{cases}
\end{equation}

\begin{proposition}
\label{prop:key}
  Assume that~\eqref{JPP} holds, let $\gb\in (\gb_0,2\gb_0)$ and set \(T=\tf(\gb)^{-1}\).
  Then, for any $\gep\in (0,1)$, there exists some \(C_0  = C_0(\gep,J) >0\) such that the following holds, if $\beta$ is sufficiently close to \(\gb_0\) (or equivalently \(T\) sufficiently large):
  \begin{enumerate}
    \item \label{irholarge}
    If \(J(x) \stackrel{|x|\to \infty}{\sim}  |x|^{-(1+\gamma)}\) with $\gamma \in (\frac23,1)$, if \(\rho \geq 1- C_0^{-1} \,T^{-1}\), then there exists an event~\(\cA\) that verifies~\eqref{singlecellis};
    \item \label{ii<2/3}
     If \(J(x) \stackrel{|x|\to \infty}{\sim}  |x|^{-(1+\gamma)}\) with $\gamma \in (0,\frac12)\cup(\frac12,\frac23)$, if $\rho \in (C_0 T^{-\frac{2-\nu}{\nu}},\frac12]$ then there exists an event~\(\cA\) that verifies~\eqref{singlecell}.    
    \item \label{iii2/3}
    If~\eqref{JPP} holds with~$\gamma=\frac23$, if \(\rho  \in ( \frac{C_0}{\psi(T)},\frac12]\) then there exists an event~\(\cA\) that verifies~\eqref{singlecell}.
    \end{enumerate}
\end{proposition}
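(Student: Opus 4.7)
All three parts follow the same size-biasing blueprint, instantiated differently in each regime. For each case, the plan is to build an event $\cA\in\cF_{[0,T]}$ out of a statistic $S=S(\cU\cap[0,T])$, chosen so that under $\bbP$ the value of $S$ concentrates sharply around $\bbE[S]$, while under the weighted measure $\bbP_\tau$ (for any choice of $(r,s)$ with $s-r\ge \epsilon T$) its mean is shifted down by a definite amount. One then takes
\begin{equation*}
\cA \;:=\; \bigl\{\, S \le \bbE[S]-\eta(\bbE[S]-\bbE_\tau[S]) \,\bigr\}
\end{equation*}
for a small $\eta>0$, and verifies both requirements of \eqref{singlecell} (or \eqref{singlecellis}) by the same two-step argument used in \Cref{sec:upper}: Poisson/Bennett concentration of $S$ under $\bbP$ gives $\bbP(\cA)\le\epsilon$; the shift provided by \Cref{lem:sizebiased} combined with control of the Laplace transform of $S$ under $\bbP_\tau$ by its $\bbP$-counterpart (via the stochastic domination of \Cref{lem:domination}, after writing $S$, or its increments, as a nondecreasing functional of $\bar\cU$) gives $\bbP_\tau(\cA^\cc)\le\epsilon$ by Chernov.

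The differences between the three parts lie entirely in the choice of $S$, picked to make the ratio $(\bbE[S]-\bbE_\tau[S])^2/\mathrm{Var}_{\bbP}(S)$ at least a large multiple of $\log(1/\epsilon)$. In part~\ref{ii<2/3}, $S$ will be an unweighted count of jumps whose amplitude $U_i$ exceeds a threshold of order $K(T)^{-1}$; the asymptotics \eqref{implicit} and \eqref{lassum} compute $\bbE[S]$ via Mecke's formula, and the shift computation in the spirit of \eqref{shiftinE} gives $(\bbE-\bbE_\tau)[S]$ of the same order as $\bbE[S]$. After separating the subregimes $\gamma<\tfrac12$ ($\nu=1$) and $\gamma\in(\tfrac12,\tfrac23)$ ($\nu=\gamma/(1-\gamma)>1$), the threshold $\rho\ge C_0 T^{-(2-\nu)/\nu}$ emerges as the boundary beyond which the signal is larger than the noise.

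For the marginal case~\ref{iii2/3} at $\gamma=\tfrac23$, no single amplitude scale is sufficient: the shift produced by counting jumps larger than any fixed threshold is of the same order as the corresponding standard deviation. One must therefore aggregate contributions from a logarithmic range of scales; a natural choice is
\begin{equation*}
S \;=\; \sum_{i:\,\vartheta_i\in[0,T]} h(U_i)\,\ind_{\{U_i\ge 1\}},
\end{equation*}
where $h$ is an explicit slowly varying weight tailored to the function $\varphi$ of \eqref{JPP}, arranged so that every dyadic amplitude scale contributes comparably. The slowly varying function $\psi$ from \eqref{defpsi} is then precisely the effective number of scales one can sum over before the variance catches up with the squared shift, and the condition $\rho\ge C_0/\psi(T)$ is the analytic expression of ``signal beats noise''. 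Feeding \eqref{psilog} into the resulting shift-size estimate recovers the three regimes of \eqref{soluce2}. Part~\ref{irholarge}, where $\gamma>\tfrac23$ and $\rho$ is close to $1$, is qualitatively different: here an unweighted count of typical-amplitude jumps, like $F_{(0,T]}$ of \eqref{def:Fab}, suffices, and the conditional requirement~\eqref{singlecellis} is essentially free because under $\bbP_\tau$ the $\tau$-blocks are independent (\Cref{lem:sizebiased}-\ref{i:independence}), so conditioning on $\cF_{\bbR\setminus[r,s]}$ reduces to the unconditional case within $[r,s]$.

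The main expected obstacle is the marginal case~\ref{iii2/3}: the weight $h$ must simultaneously balance three slowly varying quantities---$\bbE[S]$, $\bbE[S]-\bbE_\tau[S]$ and $\mathrm{Var}_{\bbP}(S)$---each of comparable size, and the estimates are only marginally tight. Obtaining a variance bound uniformly in the shape of $\varphi$ will rely on Potter's bound and the precise regular variation of $K$ and of the renewal density $u$; this is the technical core of the argument and is what pins down the precise threshold $1/\psi(T)$.
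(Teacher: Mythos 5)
Your high-level framework (size-bias, estimate the ratio $(\bbE[S]-\bbE_\tau[S])^2/\Var_\bbP(S)$, Chebyshev/Chernov plus the stochastic domination of \Cref{lem:domination}) is the right one and matches the paper for parts~\ref{ii<2/3} and~\ref{iii2/3}. However, each of your three concrete choices of statistic $S$ differs from the paper's, and for two of them the change is not cosmetic. In part~\ref{ii<2/3} the paper takes $S=\cJ_{(0,T]}$, the \emph{total} jump count (not a count thresholded at amplitude $\sim K(T)^{-1}$): a threshold of order $K(T)^{-1}$ makes $\bbE[S]$ and the shift both $O(\rho)$ and $\sqrt{\Var}\sim\sqrt\rho$, so the ratio stays bounded and never reaches the $\gep$-level for any allowed $\rho$. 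For the total count the shift is of order $\rho T^{\alpha\wedge 1}$ against a standard deviation $\sqrt{\rho T}$, which is where the exponent $-(2-\nu)/\nu$ comes from. In part~\ref{iii2/3} your statistic $\sum_i h(U_i)\ind_{\{U_i\ge 1\}}$ is missing the essential \emph{upper cap} $U_i\leq 2\beta_0/K(T)$ used in the paper's $F_T$. With the weight $\xi(k)=k^{1/3}\varphi(k)^{-2}$, one has $\xi(k)^2\bar\mu(k)\asymp \varphi(k)^{-3}/k\cdot k\asymp\dots$ and the tail $\sum_{k\geq N}\xi(k)^2\bar\mu(k)\asymp\int_N^\infty \dd s/(s\varphi(s)^3)$ diverges precisely when $\kappa\leq 1/3$, which is exactly where the marginal relevance result is nontrivial; without the cap the variance of your $S$ is infinite in that regime and the argument never starts.

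The genuine gap is in part~\ref{irholarge}. No jump-count statistic can work when $\gamma\in(\tfrac23,1)$: for the total count the shift is $\asymp\rho T^{\alpha}$ with $\alpha<\tfrac12$ against a noise $\sqrt{\rho T}$, so the ratio $\sqrt\rho\,T^{\alpha-1/2}\to 0$ even for $\rho\to 1$. For the time-dependent-threshold count $F_{(0,T]}$ of~\eqref{def:Fab} the ratio does grow over the whole window $[0,T]$, but the $\gep$-goodness/$\gep$-betterness conditions must hold for every $[r,s]\subset[0,T]$ with $s-r\geq\gep T$, and for, say, $[r,s]=[T/2,T]$ the restriction of $F$ to $(r,s]$ has mean, shift and variance all $O(\rho)$, so the test breaks down. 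The paper instead takes $\cA=\{\max_{x}L_T^Y(x)\geq(\log T)^2\}$, a \emph{local time} event. This is qualitatively different and even has the opposite orientation from your template $\{S\leq\bbE[S]-\cdots\}$: the size-biasing \emph{increases} this statistic, because for $\rho\geq 1-C_0^{-1}T^{-1}$ the representation of \Cref{lem:sizebiased} forces $Y$ to return to $0$ at every renewal point with probability $\geq e^{-(1-\rho)T}\geq 1-\gep/4$, producing local time of order $T^\alpha$ under $\bbP_\tau$, while transience gives $O(1)$ maximal local time under $\bbP$. Also, the event $\cA_{[r,s]}:=\{\int_r^s\ind_{\{Y_u=Y_r\}}\dd u\geq(\log T)^2\}\subset\cA$ is translation-equivariant and lies in $\cF_{[r,s]}$, which is what makes the conditional requirement~\eqref{singlecellis} tractable; your reason for why conditioning is ``free'' (independence of $\tau$-blocks) is on the right track but only becomes relevant once one has replaced $\cA$ by a genuinely $\cF_{[r,s]}$-measurable subevent, which an $F_{(0,T]}$-type event does not provide directly.
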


\noindent
From the above, one concludes easily the proofs of \Cref{prop:largerho} and \Cref{rele1,rele2}.

\begin{proof}[Proof of \Cref{prop:largerho}]
From item~\ref{irholarge} and applying \Cref{prop:coarse-graining2}, for any \(\beta_1 \in (\beta_0,2\beta_0)\) one can find \(\rho\) sufficiently close to \(1\) so that \(\tf(\beta_1,\rho) =0\), \textit{i.e.}\ \(\beta_c(\rho) \geq \beta_1 >\beta_0\).
\end{proof}

\begin{proof}[Proof of \Cref{rele1}]
Define \(\beta_1 := \beta_1(\rho) = \beta_0 + c_1 (\rho/C_0)^{\frac{1}{2-\nu}}\) with \(c\) small enough so that \(\beta_1 \in (\beta_0,2\beta_0)\) and \(\tf(\beta_1) < (\rho/C_0)^{\frac{\nu}{2-\nu}}\), recalling \Cref{homener} (and the fact that \(\alpha\neq \frac12\)).
With this choice we can apply item~\ref{ii<2/3} above with \(T = \tf(\beta_1)^{-1}\), so that \Cref{prop:coarse-graining} shows that \(\tf(\beta_1,\rho) =0\), that is \(\beta_c(\rho) \geq \beta_1 = \beta_0 + c \rho^{\frac{1}{2-\nu}}\), as desired.
\end{proof}

\begin{proof}[Proof of \Cref{rele2}]
Define \(\beta_1 := \beta_1(\rho) = \beta_0 + c_1/\psi^{-1}(C_0/\rho)\) with \(c_1\) small enough so that \(\beta_1 \in (\beta_0,2\beta_0)\) and \(\tf(\beta_1) < 1/\psi^{-1}(C_0/\rho)\), recalling \Cref{homener} (and the fact that \(\nu=2\) in this case).
With this choice we can apply item~\ref{iii2/3} with \(T = \tf(\beta_1)^{-1}\) so that \(\psi(T) \geq \rho/C_0\), and \Cref{prop:coarse-graining} shows that \(\tf(\beta_1,\rho) =0\), that is \(\beta_c(\rho) \geq \beta_1 = \beta_0 + c_1/\psi^{-1}(C_0/\rho) >0\).
The lower bound presented in~\eqref{soluce2} simply corresponds to taking the inverse of \(\psi\) in the case where \(\varphi(t) \sim (\log t)^{\kappa}\), see~\eqref{psilog} above.
\end{proof}

\subsection{A first comment on how to prove that an event is \texorpdfstring{\(\gep\)}{epsilon}-good (or \texorpdfstring{\(\gep\)}{epsilon}-better)}
\label{sec:eventsAB}

Before we prove the three items of \Cref{prop:key}, let us make one comment on how we will prove either \eqref{singlecell} or \eqref{singlecellis}.
While the choice of the event $\cA$ depends highly of the case that we wish to treat, there is indeed a common framework that we will use.
In the same spirit as in~\eqref{eq:boundtildePAT}, we introduce an event \(B\) that may depend on $r$ and $s$ and we observe that 
\begin{equation}\label{popopopop}
  \bQ_{[r,s]}\big( \bbP_{\tau}(\cA^{\cc}) \big) \leq \bQ_{[r,s]}\big[ \bbP_{\tau}(\cA^{\cc}) \ind_{B} \big] + \bQ_{[r,s]}(B^\cc) \,.
\end{equation}
We can thus restrict ourselves to proving that for any \([r,s]\subset [0,T]\) with \(s-r\geq \gep T\), we can find an event $B$ such that 
\begin{equation}
\label{singlecell-B}
 \bbP_{\tau}(\cA^{\cc}) \ind_{B}  \leq \frac{\gep}{2} \,,\quad \text{ and } \quad 
 \bQ_{[r,s]}(B^\cc) \leq \frac{\gep}{2}\,.
\end{equation}
In the case where one needs to prove~\eqref{singlecellis} instead (as in \Cref{prop:key}-\ref{irholarge}), one simply replace \(\bbP_{\tau}(\cA^{\cc})\) by \(\bbP_{\tau}(\cA^{\cc} \mid \cF_{\bbR_+\setminus [r,s]})\).
Recall that in all cases we also need to show that \(\bbP(\cA) \leq \gep\).

\section{Proof of \texorpdfstring{\Cref{prop:key} case \ref{irholarge}}{the proposition case (i)}}
\label{sec:proofI}

We define the events \(\cA\), \(B\) as follows 
\[
\cA = \cA_T:=\Big\{ Y :   \max_{x\in \bbZ}  L_T^Y(x) \ge  (\log T)^2 \Big\}\,,\quad \text{ with } L_T^Y(x) :=\int_0^T \ind_{\{Y_s=x\}} \dd s \,.
\]
For \( [r,s]\subset [0,T]\), we also define $\cA_{[r,s]} \subset \cA$ by 
\[ 
\cA_{[r,s]}:=\Big\{ Y :  \int^{s}_{r} \ind_{\{Y_s=Y_r\}}\dd s \ge  (\log T)^2 \Big\}\,.
\]
Finally, let us define \(B\) as follows (we will use the same event \(B\) in the proof of item~\ref{ii<2/3} of \Cref{prop:key}):
\begin{equation}
\label{def:B1}
B = B_{[r,s]}^{(R)} := \bigg\{   \sum_{i=1}^{|\tau \cap [r,s]|} \ind_{\{\tau_{i}-\tau_{i-1} \in [1,2]\}} \geq R^{-1} T^{\alpha\wedge 1} \bigg\} \,,
\end{equation}
with $R =R(\gep)$ an extra parameter which will be chosen to be large.
Let us recall that in both cases~\ref{irholarge}-\ref{ii<2/3} of \Cref{prop:key}, we have \(J(x)\stackrel{|x|\to \infty}{\sim} |x|^{-(1+\gamma)}\) with \(\gamma \in (0,1)\setminus\{\frac12\}\), so in particular \(K(t)\stackrel{t\to \infty}{ \sim} c_{\gamma} t^{-(1+\alpha)}\) with \(\alpha =\frac{1-\gamma}{\gamma} \in (0,\infty) \setminus \{1\}\). 
In this section, we prove the following three results.

\begin{lemma}
\label{lem:ProbaA}
There is a constant \(c>0\) such that, for any $\rho \in (\frac12,1)$ and any \(T\geq 1\)
\[
\bbP\big(\cA\big)\leq T \exp\big(- c (\log T)^2 \big) \,.
\]
\end{lemma}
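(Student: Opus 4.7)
The plan is to bound $\bbP(\cA)$ by a union bound over sites $x$, combined with a sharp exponential tail estimate for the total local time at a single site. Writing $H_x := \inf\{t \geq 0 : Y_t = x\}$ and $L_\infty^Y(x) := \int_0^\infty \ind_{\{Y_s = x\}} \dd s$, I would start with
\[
\bbP(\cA) \leq \sum_{x \in \bbZ} \bbP\bigl( L_T^Y(x) \geq (\log T)^2 \bigr) \leq \sum_{x \in \bbZ} \bbP\bigl( H_x \leq T,\, L_\infty^Y(x) \geq (\log T)^2 \bigr) .
\]
Since $L_{H_x}^Y(x) = 0$ by definition of the hitting time, the strong Markov property for $Y$ at $H_x$ together with translation invariance shows that, conditionally on $\{H_x \leq T\}$, $L_\infty^Y(x)$ has the same distribution as $L_\infty^Y(0)$ under $\bbP$. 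Summing over $x$ and observing that $\sum_x \bbP(H_x \leq T) = \bbE[|S_T|]$, where $S_T := \{Y_t : t \in [0,T]\}$ is the range of $Y$ up to time $T$, the bound factorizes into
\[
\bbP(\cA) \leq \bbE[|S_T|] \cdot \bbP\bigl( L_\infty^Y(0) \geq (\log T)^2 \bigr) .
\]

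Each of the two factors is then controlled separately. For the first, the range satisfies $|S_T| \leq M_T + 1$ where $M_T \sim \mathrm{Poisson}(\rho T)$ denotes the number of jumps of $Y$ in $[0,T]$, so that $\bbE[|S_T|] \leq \rho T + 1 \leq 2T$. For the second, I would decompose according to the total number $N_\infty := |\{k \geq 0 : W_k = 0\}|$ of visits to the origin by the embedded discrete walk: conditionally on $N_\infty$, the local time $L_\infty^Y(0)$ is a sum of $N_\infty$ i.i.d.\ $\mathrm{Exp}(\rho)$ holding times, hence a $\mathrm{Gamma}(N_\infty, \rho)$ variable. Since the paper observes that $W$ is transient when $\gamma \in (0,1)$, $N_\infty$ is geometric with $\bbP(N_\infty \geq k) = q^{k-1}$ for some $q < 1$ depending only on $J$, and a direct moment generating function computation yields the clean identity $L_\infty^Y(0) \sim \mathrm{Exp}((1-q)\rho)$, so that $\bbP(L_\infty^Y(0) \geq t) = e^{-(1-q)\rho t}$.

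Putting the two factors together gives $\bbP(\cA) \leq 2 T \exp(-(1-q) \rho (\log T)^2)$, and since $(1-q)\rho \geq (1-q)/2$ uniformly over $\rho \in (\tfrac12, 1)$, we conclude that $\bbP(\cA) \leq T \exp(-c (\log T)^2)$ for any $c < (1-q)/2$ and $T$ sufficiently large. No serious obstacle is expected: the only step requiring attention is the exponential-distribution identity for $L_\infty^Y(0)$, which is a routine geometric-mixture-of-Gammas computation but which crucially relies on the transience of $W$ (without transience $q = 1$ and the geometric distribution degenerates, so the argument is genuinely specific to the regime $\gamma \in (0,1)$).
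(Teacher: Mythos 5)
Your proof is correct and follows essentially the same route as the paper's: a union bound over sites, the strong Markov property at the first hitting time combined with translation invariance, the range bounded by the (Poisson) number of jumps, and the observation that transience makes $L_\infty^Y(0)$ exponential with rate $p_\infty\rho$. The only cosmetic difference is that you write the hitting-time factorization explicitly and keep the $+1$ in the range bound, whereas the paper factors through $\bbP(L_T^Y(x)>0)\cdot\bbP(L_\infty^Y(0)\geq(\log T)^2)$ directly; the extra constant $2$ and the ``$T$ sufficiently large'' qualifier are harmless since for $T$ near $1$ the claimed bound exceeds $1$ and is vacuous.
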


\begin{lemma}
  \label{lem1}
For any \(\gep \in (0,1)\), for any \([r,s]\subset [0,T]\) with  \(s-r\geq \gep T\), if $\rho \geq 1- \frac14 \gep T^{-1}$, then for large enough \(T\) we have
\begin{equation*}
   \bbP_{\tau}(\cA^{\cc}_{[r,s]}) \ind_{B}\leq  \frac{\gep}{2} \,.
\end{equation*}
\end{lemma}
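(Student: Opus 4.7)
\noindent\emph{Plan of proof.} The strategy is to exploit the hypothesis $1-\rho\leq\tfrac{1}{4}\gep T^{-1}$, which makes the conditioning in \Cref{lem:sizebiased}(ii) essentially trivial on any block of length at most $T$; informally, under $\bbP_{\tau}$ the walk $Y$ behaves like a concatenation of $W$-bridges whose endpoints remain pinned at $Y_r$.

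\smallskip
\emph{Step 1: block endpoints stay at $Y_r$.} Set $\Delta_j:=\tau_j-\tau_{j-1}$ and $\zeta_j:=Y_{\tau_j}-Y_{\tau_{j-1}}$, with $j$ running over the blocks of $\tau$ contained in $[r,s]$. By \Cref{lem:sizebiased}(ii) and the independence of the increments of $W$,
\[
\bbP_{\tau}(\zeta_j=0)=\frac{\P(W_{\rho\Delta_j}=0)\,\P(W_{(1-\rho)\Delta_j}=0)}{\P(W_{\Delta_j}=0)}\geq \P(W_{(1-\rho)\Delta_j}=0)\geq e^{-(1-\rho)\Delta_j},
\]
where the first inequality uses the monotonicity of $t\mapsto\P(W_t=0)$ from \Cref{lem:unimod}. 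Thanks to the block independence in \Cref{lem:sizebiased}(i), setting $E:=\bigcap_j\{\zeta_j=0\}$ yields, by a union bound,
\[
\bbP_{\tau}(E^{\cc})\leq \sum_j(1-\rho)\Delta_j\leq (1-\rho)(s-r)\leq \tfrac{\gep}{4}.
\]
On $E$, every renewal point $\tau_j\in[r,s]$ satisfies $Y_{\tau_j}=Y_r$.

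\smallskip
\emph{Step 2: bridge description on each block, conditional on $E$.} Since $\{\zeta_j=0\}$ is measurable with respect to the $j$-th block of $Y$, the conditional law $\bbP_{\tau}(\cdot\mid E)$ retains the product structure over blocks. Writing out the conditional law on one block (the constraint $W_{\Delta_j}=0$ becomes redundant once $W_{\rho\Delta_j}=0$ is imposed, thanks to the independence of $W_{[\rho\Delta_j,\Delta_j]}$), $Y_{[\tau_{j-1},\tau_j]}$ becomes a $W$-bridge from $0$ to $0$ over time $\rho\Delta_j$. For a short block ($\Delta_j\in[1,2]$) the probability that this bridge makes no jump equals
\[
\frac{e^{-\rho\Delta_j}}{\P(W_{\rho\Delta_j}=0)}\geq e^{-2}=:c_0,
\]
and on this subevent $Y\equiv Y_r$ throughout the block, contributing $\Delta_j\geq 1$ to $L^Y_{[r,s]}(Y_r)$.

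\smallskip
\emph{Step 3: concentration on $B$.} On $B$ there are $M\geq R^{-1}T^{\alpha\wedge 1}$ short blocks. By the conditional block independence of Step~2, the number of short blocks on which $Y\equiv Y_r$ stochastically dominates $\mathrm{Binomial}(M,c_0)$ under $\bbP_{\tau}(\cdot\mid E)$. A Chernoff bound then gives, for some $c_1>0$,
\[
\bbP_{\tau}\big(L^Y_{[r,s]}(Y_r)<\tfrac{c_0}{2}M\,\big|\,E\big)\leq e^{-c_1 M}.
\]
Since $M\geq R^{-1}T^{\alpha\wedge 1}\to\infty$, for $T$ sufficiently large one has both $\tfrac{c_0}{2}M\geq(\log T)^2$ and $e^{-c_1 M}\leq\gep/4$. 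Combining with Step~1 yields $\bbP_{\tau}(\cA^{\cc}_{[r,s]})\ind_B\leq \gep/4+\gep/4=\gep/2$.

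\smallskip
\emph{Main obstacle.} The non-trivial content of the lemma is the passage from the almost-deterministic discrete statement \emph{``$Y_{\tau_j}=Y_r$ at every block endpoint''} (which comes for free because $(1-\rho)(s-r)$ is tiny) to a lower bound on the genuinely continuous-time local time $L^Y_{[r,s]}(Y_r)$. The bridge-has-no-jump event on short blocks is what manufactures the continuous-time mass, and it is the independence from \Cref{lem:sizebiased}(i)---which has to be propagated cleanly through the conditioning on $E$---that allows Chernoff to turn a per-block reward of size $O(1)$ into a total reward much larger than $(\log T)^2$.
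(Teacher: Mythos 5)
Your proof is correct and follows essentially the same route as the paper's: pin the block endpoints at $Y_r$ with high probability (exploiting $1-\rho \leq \tfrac14\gep T^{-1}$), use the product structure from \Cref{lem:sizebiased} to condition on this pinning and describe each block as an independent $W$-bridge, then apply concentration to the number of short blocks on which the bridge makes no jump, which under $B$ overwhelms the $(\log T)^2$ threshold. The only cosmetic deviations — a union bound over blocks in Step 1 versus the paper's direct product bound $1-e^{-(1-\rho)s}$, and ``Chernoff'' versus the paper's ``Hoeffding'' — do not alter the argument.
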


Note that by inclusion  we have  $ \bbP_{\tau}(\cA^{\cc} \ | \ \cF_{\bbR\setminus [r,s]}) \le  \bbP_{\tau}(\cA^{\cc}_{[r,s]} \ | \ \cF_{\bbR\setminus [r,s]})=   \bbP_{\tau}(\cA^{\cc}_{[r,s]})$. 
\begin{lemma}
  \label{lem2}
  Assume that \(J(x)\sim |x|^{-(1+\gamma)}\) with \(\gamma \in (0,1)\setminus\{\frac12\}\).
  If $\gep$ is sufficiently small, $R\ge \gep^{-5}$ and~$T$ is sufficiently large then for any \([r,s]\subset [0,T]\) with \(r-s \geq \gep T\),
\[
\bQ_{[r,s]}(B^{\cc}) \leq \gep/2 \,.
\]
\end{lemma}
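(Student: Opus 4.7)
My approach is a first-and-second-moment estimate for
\[
V := \sum_i \ind_{\{\tau_i - \tau_{i-1} \in [1,2],\, \tau_i \in (r,s]\}}
\]
under $\bQ_{[r,s]}$. By translation and the strong Markov property, $\bQ_{[r,s]}$ is the law of $r + (\tau \cap [0,T'])$ with $T' := s-r \geq \gep T$ under $\bQ(\cdot \mid T' \in \tau)$. The joint density of $(\tau_i,\tau_i-\tau_{i-1})$ under this conditioned measure is $u(T')^{-1} u(v-t) K(t) u(T'-v)\,dt\,dv$, which yields
\[
\bbE_{\bQ_{[0,T']}}[V] \;=\; \frac{1}{u(T')} \int_1^2 K(t)\,dt \int_t^{T'} u(v-t)\,u(T'-v)\,dv \,.
\]
For $\alpha > 1$ one uses $u(\cdot)\to 1/m_K$ from \eqref{renewalr}; for $\alpha \in (0,1)$ one uses $u(t)\sim c_\alpha t^{\alpha-1}/L(t)$ from \eqref{renewal} together with the beta identity $\int_0^T w^{\alpha-1}(T-w)^{\alpha-1}dw = B(\alpha,\alpha) T^{2\alpha-1}$. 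Since under $J(x) \sim |x|^{-(1+\gamma)}$ the slowly varying $L$ is asymptotically constant, one obtains $\bbE_{\bQ_{[0,T']}}[V] \geq c_0\, (T')^{\alpha\wedge 1} \geq c_0\, \gep^{\alpha\wedge 1}\, T^{\alpha\wedge 1}$ for a positive constant $c_0 = c_0(\gamma)$.

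Next, an analogous three-point density computation gives $\bbE_{\bQ_{[0,T']}}[V(V-1)] \lesssim (u*u*u)(T')/u(T') \sim C\, (T')^{2\alpha}$, i.e.\ $\bbE[V^2] \leq C \bbE[V]^2$ uniformly in $T'$. When $\alpha < 1$ this only says $\Var(V)$ is of the \emph{same} order as $\bbE[V]^2$, so a direct use of Chebyshev yields only a $O(1)$ probability, too weak for our $\gep/2$ target. The plan is therefore to combine $\bbE[V^2] \leq C \bbE[V]^2$ (which gives tightness of $V/\bbE[V]$) with a polynomial lower tail bound: the rescaled count $V/\bbE[V]$ converges, under $\bQ(\cdot\mid T'\in\tau)$ as $T'\to\infty$, to a bounded functional of an $\alpha$-stable bridge, whose density at $0$ is bounded. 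This gives the uniform-in-$T'$ tail estimate $\bQ_{[0,T']}(V < c\,\bbE[V]) \leq C'\, c$. (For $\alpha > 1$, the same bound is elementary by Chebyshev since the fluctuations are Gaussian.)

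To conclude, take $R = \gep^{-5}$, so that by the first-moment lower bound
\[
\frac{R^{-1} T^{\alpha\wedge 1}}{\bbE_{\bQ_{[0,T']}}[V]} \;\leq\; \frac{\gep^5}{c_0 \gep^{\alpha\wedge 1}} \;\leq\; C\, \gep^{4}
\]
(using $\alpha\wedge 1 \leq 1$). The lower-tail estimate then gives $\bQ_{[r,s]}(B^{\cc}) = \bQ_{[0,T']}(V < \gep^5 T^{\alpha\wedge 1}) \leq C'C\gep^4 \leq \gep/2$ for $\gep$ small, which is the desired bound. The main obstacle is the refinement beyond Chebyshev in the $\alpha<1$ regime: one must extract a polynomial lower tail at $0$ of the limit distribution of $V/\bbE[V]$ under the bridge measure. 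This can be done either by identifying the scaling limit via stable-bridge theory, or (more elementarily) by iterating the convolution computation to control all integer moments $\bbE[V^q]/\bbE[V]^q \leq C_q$ and deducing the polynomial tail by integration. Everything else is a straightforward application of regular variation and the renewal density asymptotics from Section~\ref{sec:annealed}.
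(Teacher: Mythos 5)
Your approach is genuinely different from the paper's, but it has a real gap. The paper's proof first \emph{removes} the bridge conditioning: since the event only depends on $\tau\cap[0,s/2]$, \Cref{lem:removecond} bounds $\bQ_{[r,s]}(\cdot)$ by a constant times the \emph{unconditioned} $\bQ(\cdot)$. Once the conditioning is gone, the increments $(\tau_i-\tau_{i-1})$ are i.i.d., so the count of increments in $(1,2]$ among the first $R^{-1/2}T^{\alpha\wedge1}$ increments is a sum of i.i.d.\ Bernoullis and Hoeffding gives an exponentially small probability; the only remaining worry is that $|\tau\cap[0,s/2]|\geq R^{-1/2}T^{\alpha\wedge1}$, which is handled by a one-line truncated Markov inequality \eqref{decqwer} giving a bound of order $\gep^{-(\alpha\wedge1)}R^{-1/2}$. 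You instead keep the bridge conditioning and try to close the argument with first and second moments plus a lower-tail estimate near zero.

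The gap is precisely the lower-tail estimate $\bQ_{[0,T']}(V<c\,\bbE[V])\leq C'c$ that you flag yourself. Your route (b) — ``control all integer moments and deduce the polynomial tail by integration'' — does not work: bounded moment ratios $\bbE[V^q]/\bbE[V]^q\leq C_q$ constrain the upper tail of $V/\bbE[V]$, not its lower tail at $0$ (a random variable equal to $0$ with probability $1/2$ has all moments bounded relative to its mean but a fat atom at $0$). Your route (a) — identify the scaling limit under the bridge measure as a functional of a stable bridge with bounded density near $0$ — is plausible but far from a proof: you would need the functional limit theorem \emph{under the conditioned law}, continuity of the functional, a density bound near $0$ for the limit, and crucially a \emph{uniform-in-$T'$} upgrade of the distributional convergence (otherwise it gives nothing for finite $T'$). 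None of that is supplied, and it is substantial work, whereas the paper's route is elementary and does not need the scaling limit at all. Also note that removing the conditioning is precisely what lets the paper avoid the difficulty: under $\bQ$ the count of short increments is a genuine i.i.d.\ sum and the ``enough renewals'' part reduces to the elementary estimate \eqref{decqwer}, which is exactly the uniform-in-$T$ version of the stable-law tail heuristic you invoke. Your first-moment computation and the identification $\alpha\wedge1$ are correct, but as written the proof is not complete.
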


\noindent
In view of \Cref{sec:eventsAB} (see \eqref{singlecell-B}), this shows that the event $\cA$ satisfies \eqref{singlecellis} for \(T\) sufficiently large.
This proves \Cref{prop:key}-\ref{irholarge}.

\subsection{Proof of \texorpdfstring{\Cref{lem:ProbaA}}{the first lemma}}

Let us note that we have by sub-additivity
\[
\bbP(\cA) \le  \sum_{x\in \bbZ} \bbP\left( L_T^Y(x)> (\log T)^2 \right) \,.
\]
Then, using the strong Markov property at the first time when \(Y_t=x\) and translation invariance, we obtain that \(\bbP( L_T^Y(x)> (\log T)^2 ) \leq \bbP( L_T^Y(x)> 0) \bbP( L_T^Y(0)> (\log T)^2)\), so that bounding \(L_T^Y(0) \leq L_{\infty}^Y(0)\)
\[
\bbP(\cA^{\cc}) \leq  \Big(\sum_{x\in \bbZ^d} \bbP( L_T^Y(x)>0)\Big) \bbP\left( L_{\infty}^Y(0)\ge (\log T)^2 \right) \,.
\]

The first term is simply the expected size of the range of $(Y_s)_{s\in [0,T]}$, which can be bounded from above by the expected number of jumps (including ghost jumps corresponding to $V_i=0$), and is therefore bounded by~$\rho T$.

For the second term, since the walk is transient, $L_{\infty}^Y(0)$ is an exponential random variable with parameter \(\rho p_{\infty}\) with \(p_{\infty}\) is the probability that the discrete-time random walk with transition kernel $J$ never returns to \(0\). 
Indeed, we can write $L_{\infty}^Y(0)=\sum_{i=1}^{G} E_{i}$ where $G$ is a geometric random variable of parameter \(p_{\infty}\) and $E_i$ are independent exponential random variables with parameter $\rho$. 
In particular, we have 
\begin{equation*}
\bbP\big( L_\infty(0)\ge (\log T^2 ) \big) = e^{- p_{\infty} \rho (\log T)^2} \leq e^{- \frac12 p_{\infty} (\log T)^2} \,,
\end{equation*}
recalling that \(\rho \in (\frac12,1)\).
This concludes the proof of \Cref{lem:ProbaA}, with $c=p_{\infty}/2$.
\qed

\subsection{Proof of \texorpdfstring{\Cref{lem1}}{the second lemma}}

We assume to simplify notation that $r=0$.
The idea is that if $(1-\rho)T$ is very small, then under $\bbP_{\tau}$, with large probability $Y$ comes back to zero at every point in~$\tau$ (and this estimate is uniform in the point process $\tau\subset [0,s]$).
Indeed using the representation of \Cref{lem:sizebiased} for~\(\bbP_{\tau}\), we get that 
\[
\bbP_{\tau} \big(\forall t\in \tau, \ Y_{t}=0 \big) = \prod_{i=1}^{|\tau|} \P(W_{\rho (\tau_i-\tau_{i-1}) } =0 \mid W_{\tau_i-\tau_{i-1}} =0) \,.
\]
Then, using Markov's property and then \Cref{lem:unimod}, we get that 
\[
  \P(W_{\rho t } =0 \mid W_{t} =0)  = \frac{ \P(W_{\rho t } =0) \P(W_{(1-\rho)t}=0)  }{ \P(W_{t} =0)} \geq \P(W_{(1-\rho)t}=0) \,.
\]
Additionally, we have that $\P(W_{(1-\rho)t}=0)\ge e^{-(1-\rho)t}$, since $e^{-(1-\rho)t}$ is the probability of having no jump at all.
All together, we have that for \(\tau\subset [0,s]\) with \(\tau_0=0\) and \(s\in \tau\),
\begin{equation}
  \label{pinned}
   \bbP_{\tau} \big(\forall t \in \tau, \ Y_{t}=0 \big) \ge  \prod_{i=1}^{|\tau|}  e^{-(1-\rho) (\tau_i-\tau_{i-1})} =  e^{- (1-\rho) s} \,.
\end{equation}
Since on the event \(B\) the number of renewal points is of order $T^{\alpha \wedge 1}$ (recall~\eqref{def:B1}), this in turns will imply that the total time spent at zero is typically be much larger than $(\log T)^2$.
More precisely, write
\[
\bbP_{\tau}(\cA^{\cc}_{[0,s]}) \leq  \bbP_{\tau}\big(L_s(0) \leq (\log T)^2 \ ; \  \forall t\in \tau, Y_{t}=0  \big) +  \bbP_{\tau}\big(\exists t\in \tau, \, Y_t \neq 0\big) \,.
\]
Thanks to~\eqref{pinned}, the second term is smaller than $1-e^{-(1-\rho)s} \leq (1-\rho)T \leq \frac{\gep}{4}$, recalling the condition on \(\rho\).
On the other hand, the first term is smaller than 
\begin{equation}
  \label{ajaj}
   \hat \bbP_{\tau}\big( L_s(0) \leq (\log T)^2\big)  \,, \quad \text{ with } \hat\bbP_{\tau} \big( \cdot \big)  :=  \bbP_{\tau}\big( \, \cdot \mid  \forall t\in \tau,\  Y_{t}=0 \big) \,.
\end{equation}
We let $i_i\dots,i_{N}$ denote the ordered enumeration of the set $\{ i  : \tau_{i}-\tau_{i-1}\in (1,2] ; \tau_i\le s \}$.
Then, for $k\le N$ let us set \(\chi_k\) the indicator of the event~$\{\forall s\in [\tau_{i_k},\tau_{i_{k+1}}], Y_s= 0  \}$.
Thanks to \Cref{lem:sizebiased}, the variables $(\chi_{k})_{1\leq k \leq N}$ are independent Bernoulli variables under $\hat \bbP_{\tau}$, with parameter
\[
\frac{\P\big(\forall u\in [0,\rho(\tau_{i_k}-\tau_{i_k-1})],\, W_{u}=0  \mid W_{\tau_{i_k}-\tau_{i_k-1}} =0 \big)  }{\P\big( W_{\rho(\tau_{i_k}-\tau_{i_k-1})} =0 \mid W_{\tau_{i_k}-\tau_{i_k-1}} =0 \big) } = \frac{\P\big(\forall u\in [0,\rho(\tau_{i_k}-\tau_{i_k-1})],\, W_{u}=0 \big)  }{\P( W_{\rho(\tau_{i_k}-\tau_{i_k-1})} =0) }\,.
\] 
Therefore, bounding the denominator by~\(1\) and then using the fact that $\rho(\tau_{i_k}-\tau_{i_k-1})\le 2$, we get that the parameter verifies
\(
\hat\bbP_{\tau}(\chi_k=1) \geq e^{-2} .
\)
Then, on the event that $N\ge R^{-1}T^{1\wedge \alpha}$ (\textit{i.e.}\ on the event \(B\)), we have
 \begin{equation*}
     \hat \bbP_{\tau}\big( L_s(0) \leq (\log T)^2\big)\le    \hat \bbP_{\tau}\bigg( \sum_{k=1}^{R^{-1}T^{1\wedge \alpha}} \chi_k \leq (\log T)^2\bigg)\le \exp\left( -c_R T^{1\wedge \alpha}\right) \,,
 \end{equation*}
where the last inequality is a simple consequence of a large deviation estimate (using for instance Hoeffding's inequality), provided that \((\log T)^2 \leq \frac12 e^{-2} R^{-1} T^{1\wedge \alpha}\).
This concludes the proof of \Cref{lem1} if \(T\) is large enough.
\qed

\subsection{Proof of \texorpdfstring{\Cref{lem2}}{the third lemma}}
Assume again to simplify notation that $r=0$.
First of all, \(\bQ_{[0,s]}(B^{\cc} )\) is bounded by
\[
\bQ_{[0,s]}\bigg( \sum_{i=1}^{|\tau \cap [0,s/2]|} \ind_{\{\tau_{i}-\tau_{i-1} \in (1,2]\}} < R^{-1}T^{\alpha \wedge 1} \bigg)
\leq C  \bQ\bigg( \sum_{i=1}^{|\tau \cap [0,s/2]|} \ind_{\{\tau_{i}-\tau_{i-1} \in (1,2]\}} <R^{-1}T^{\alpha\wedge 1}\bigg) \,,
\]
where we have used \Cref{lem:removecond} to remove the conditioning \(s\in \tau\), at the cost of a multiplicative constant \(C>0\).
Then, omitting integer parts to lighten notation, we have that the last probability is bounded by
\begin{equation*}
  \bQ\bigg( \sum_{i=1}^{R^{-1/2}T^{\alpha\wedge 1}} \ind_{\{\tau_{i}-\tau_{i-1} \in (1,2]\}} < R^{-1}T^{\alpha\wedge 1}\bigg)+ \bQ\left( |\tau \cap [0,s/2]| < R^{-1/2} T^{\alpha\wedge 1}\right) \,.
\end{equation*}
Using that the \(\ind_{\{\tau_{i}-\tau_{i-1} \in (1,2]\}}\) are i.i.d.\ Bernoulli random variables with parameter \(\int^2_1 K(s)\dd s\), we get by a large deviation estimate (e.g.\ Hoeffding's inequality) that the first term decays like $e^{-c T^{1\wedge \alpha}}$, provided that $R$ is large enough so that \(R^{-1/2} < \frac12 \int^2_1 K(s)\dd s\).
For the second term, using the assumption that $s\ge \gep T$, we simply write
\[
\bQ\big( |\tau \cap [0,s/2]| < R^{-1/2} T^{\alpha\wedge 1} \big) \leq \bQ\big( \tau_{R^{-1/2} T^{\alpha\wedge 1}}> \gep T/2 \big) \,.
\]
Then, using Markov's inequality, we have 
\begin{equation}\label{decqwer}
 \bQ(\tau_{k}> A)\le \bQ\bigg(\sum_{i=1}^k [(\tau_i-\tau_{i-1})\wedge A] \ge A\bigg)\le  \frac{k}{A} \bQ(\tau_1\wedge A) .
\end{equation}
Applying this with \(k= R^{-1/2} T^{\alpha\wedge 1}\) and \(A= \gep T/2\), we thus get that 
\begin{equation*}
  \bQ\big(\tau_{R^{-1/2} T^{\alpha\wedge 1}}> \gep T/2 \big) \le  \frac{2 T^{\alpha\wedge 1-1}}{\gep R^{1/2}} \int^{\gep T/2}_0 \bar K(t)\dd t \le \frac{C}{R^{1/2} \gep^{\alpha\wedge 1}}  
\end{equation*}
where in the last inequality is valid for all $T\ge 1$, for a constant $C$ which depends only on the particular expression for $J(\cdot)$ (recall that \(\bar K(t) \sim \bar c_{J} t^{-\alpha}\)  with \(\alpha \in (0,\infty)\setminus\{1\}\)). 
Since \( \gep^{-\alpha\wedge 1}\leq \gep^{-1}\) and \(R \geq \gep^{-5}\), this shows that \(\bQ_{[r,s]}(B^{\cc}) \leq C \gep^{3/2}\), which concludes the proof if \(\gep\) has been taken small enough.
\qed

\section{Proof of Proposition \ref{prop:key}, case \ref{ii<2/3}}
\label{sec<2/3}

Again, let us now define the event \(\cA\); the event \(B\) is still defined as in~\eqref{def:B1}.
For an interval $I\subset [0,T]$, let us define $\cJ_{I} = |\{i : \vartheta_i \in I\} |$ the number of jumps of $Y$ in the time interval $I$ (recall the Poisson construction of \Cref{sec:propertiesRW}).
We then introduce the event
\begin{equation}
 \cA:=\left\{ \cJ_{(0,T]}<  \rho T- R \sqrt{\rho T} \right\} \,,
\end{equation}
where the constant $R$ will be chosen sufficiently large later on.
Since under~$\bbP$ the number of jumps $\cJ_{(0,T]}$ is a Poisson random variable with parameter~$\rho T$, a simple application of Chebyshev's inequality shows that $\bbP(\cA)\leq R^{-2}\leq \gep$ provided that $R \geq \gep^{-1/2}$.
Hence, the first part of~\eqref{singlecell} holds. 
To prove that \eqref{singlecell-B} holds, we rely on Lemma \ref{lem2} to control \(\bQ_{[r,s]}(B^{\cc})\) and on the following lemma.

\begin{lemma}
  \label{lem:A2}
  For any \(\gep \in (0,1)\) there exist \(R = R(\gep)\) and \(C_0= C_0(\gep,R,J)\) such that the following holds. 
  For any \([r,s]\subset [0,T]\) with \(s-r\geq \gep T\), if \(\rho \geq C_0 T^{1-2(\alpha \wedge 1)}\), then for large enough~\(T\) we have 
  \[
  \bbP_{\tau}(\cA^{\cc}) \ind_B \leq \frac{\gep}{2} \,.
  \]
\end{lemma}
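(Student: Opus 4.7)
The plan is to bound $\bbP_\tau(\cA^{\cc})\,\ind_B$ by a Chernov (Laplace transform) argument, exploiting that under $\bbP_\tau$ the expected number of $Y$-jumps in $[0,T]$ is strictly smaller than $\rho T$, with a deficit of order $\rho \cdot T^{\alpha \wedge 1}$ on the event $B$.

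I would first compute the size-bias shift block by block. By \Cref{lem:sizebiased}, under $\bbP_\tau$ the number of jumps in the $i$-th block, $X_i := \cJ_{(\tau_{i-1},\tau_i]}$, has the same law as $N_{\rho \Delta_i}$ (the number of jumps of $W$ on $[0,\rho\Delta_i]$) conditioned on $W_{\Delta_i}=0$, with $\Delta_i := \tau_i-\tau_{i-1}$. Conditioning on the value of $N_{\rho\Delta_i}$ and using the symmetry of $J$, a direct computation gives
\begin{equation*}
\bbE_\tau[X_i] = \E\big[N_{\rho \Delta_i} \,\big|\, W_{\Delta_i} = 0\big] = \rho \Delta_i\, \frac{\E[J(W_{\Delta_i})]}{\P(W_{\Delta_i} = 0)} .
\end{equation*}
Writing $\P(W_\Delta = 0) - \E[J(W_\Delta)] = \sum_{k \geq 0} e^{-\Delta}\frac{\Delta^k}{k!}(\P_k(0) - \P_{k+1}(0))$ and using the symmetric unimodality of $\P_k$ (cf.\ \Cref{stablemma}) to see that every term is non-negative, while the $k=0$ term equals $e^{-\Delta}(1-J(0))$, I obtain, for any $\Delta_i \in [1,2]$, the uniform lower bound $\bbE[X_i] - \bbE_\tau[X_i] \geq c_0 \rho$ for some constant $c_0 = c_0(J) > 0$.

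Next I would plug this shift into a Laplace transform estimate. Exactly as in \eqref{monotoJ}--\eqref{aftermonotoJ}, applying \Cref{lem:domination} to the non-decreasing function $x \mapsto e^{\eta x} - 1 - \eta x$ of the counting variable yields, for every $\eta \in (0,1/2]$,
\begin{equation*}
\bbE_\tau[e^{\eta X_i}] \leq \bbE[e^{\eta X_i}] - \eta (\bbE - \bbE_\tau)[X_i] .
\end{equation*}
Since $X_i$ is $\mathrm{Poisson}(\rho \Delta_i)$ under $\bbP$ and $e^\eta - 1 - \eta \leq \eta^2$ for $\eta \in [0,1]$, this gives $\log \bbE_\tau[e^{\eta(X_i - \rho \Delta_i)}] \leq \eta^2 \rho \Delta_i$ for every block, sharpened to $4\eta^2 \rho - e^{-1} c_0 \eta \rho$ when $\Delta_i \in [1,2]$ (using $e^{-\eta \rho \Delta_i} \geq e^{-1}$ and $e^{2\eta^2 \rho} \leq 1 + 4\eta^2 \rho$ for $\eta\rho \leq 1/2$). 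By the independence of the $X_i$ under $\bbP_\tau$ (\Cref{lem:sizebiased}) and the fact that $B$ forces $|\{i : \Delta_i \in [1,2]\}| \geq R^{-1} T^{\alpha \wedge 1}$, summing yields
\begin{equation*}
\ind_B\, \log \bbE_\tau\big[e^{\eta(\cJ_{(0,T]} - \rho T)}\big] \leq 5\eta^2 \rho T - c_1 \eta \rho R^{-1} T^{\alpha \wedge 1} ,\qquad c_1 := e^{-1} c_0 .
\end{equation*}

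Finally, Chernov's inequality gives $\bbP_\tau(\cA^{\cc}) \leq e^{\eta R\sqrt{\rho T}} \bbE_\tau[e^{\eta(\cJ_{(0,T]} - \rho T)}]$. The assumption $\rho \geq C_0 T^{1 - 2(\alpha \wedge 1)}$ with $C_0 \geq 4R^4/c_1^2$ ensures $c_1 \rho R^{-1} T^{\alpha \wedge 1} \geq 2R\sqrt{\rho T}$, so optimizing over $\eta > 0$ (the optimum falls in the admissible range $\eta \leq 1/2$, $\eta \rho \leq 1$) produces
\begin{equation*}
\ind_B\, \bbP_\tau(\cA^{\cc}) \leq \exp\big(-c_1^2 R^{-2} \rho\, T^{2(\alpha \wedge 1) - 1}/80\big) \leq \exp\big(-c_1^2 C_0/(80 R^2)\big) ,
\end{equation*}
which is at most $\gep/2$ provided $C_0 = C_0(R,\gep,J)$ is taken large enough. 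The main obstacle I foresee is that the size-biasing shift per block is only of order $o(\rho)$ as soon as $\Delta_i$ is not of order one, so on large blocks one can do no better than the ``Gaussian'' Laplace bound $e^{\eta^2 \rho \Delta_i}$ coming from \Cref{lem:domination}; the useful shift must therefore be harvested exclusively from the small-block statistic controlled by $B$, and the balance between that shift and the $R\sqrt{\rho T}$ width of $\cA$ is precisely what dictates the threshold $\rho \gtrsim T^{1 - 2(\alpha \wedge 1)}$ in the hypothesis.
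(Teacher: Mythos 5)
Your proof is correct, but it takes a genuinely different route from the paper's. The paper splits $\cJ_{(0,T]}=\cJ_1+\cJ_2$ (the contribution of blocks with $\Delta\tau_i\in(1,2]$ versus the rest) and runs Chebyshev on each piece, using \Cref{espetvar} to control the mean shift and the variance of $\cJ_1$ under $\bbP_\tau$. You instead run a Chernov/Laplace argument block by block: \Cref{lem:domination} applied to $x\mapsto e^{\eta x}-1-\eta x$ (exactly as in \eqref{monotoJ}--\eqref{aftermonotoJ}) reduces everything to the per-block mean shift, which you compute exactly as $\bbE_\tau[X_i]=\rho\Delta_i\,\E[J(W_{\Delta_i})]/\P(W_{\Delta_i}=0)$ and lower-bound via the unimodality inequality $\P(W_\Delta=0)-\E[J(W_\Delta)]\geq e^{-\Delta}(1-J(0))$. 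This is an alternative to the paper's route through $\bbP_\tau(\cJ=0)-\bbP(\cJ=0)$ and the Lipschitz property of $K$ on $(1,2]$, and it yields a bound exponentially small in $C_0$ rather than the $O(R^{-2})$ from Chebyshev — an improvement, though not needed here. One technical point worth spelling out: the identification of the $\bbP_\tau$-law of $X_i=\cJ_{(\tau_{i-1},\tau_i]}$ with the ring count of $W$ on $[0,\rho\Delta_i]$ conditioned on $\{W_{\Delta_i}=0\}$ goes slightly beyond \Cref{lem:sizebiased} as stated, since that lemma describes only the trajectory $Y_{[\tau_{i-1},\tau_i]}$ and $\cJ$ also counts the ``ghost'' atoms of $\bar\cU$ not visible in $Y$. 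The identification is nevertheless correct because the density $w$ depends on $Y$ alone, so the conditional law of $\bar\cU$ given $Y$ is unchanged under $\bbP_\tau$; your resulting formula for $\bbE_\tau[X_i]$ matches what the paper obtains from Mecke's formula in \eqref{Mecke1}.
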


\noindent
This concludes the proof of \Cref{prop:key}-\ref{ii<2/3}, since we have \(\nu = \frac{1}{\alpha \wedge 1}\), so that \(1-2(\alpha\wedge 1)=  -\frac{2-\nu}{\nu}\).
\qed

\begin{proof}[Proof of \Cref{lem:A2}]
For $\tau=\{\tau_0,\tau_1,\dots,\tau_m\} \subset [r,s]$ fixed and \(\tau_0=r\), \(\tau_m=s\), we can decompose the number of jumps as follows:
\begin{equation*}
 \cJ_{(0,T]}:= \cJ_{(0,r]}+ \sum_{k=1}^{m}   \cJ_{(\tau_{i-1},\tau_i]} + \cJ_{(s,T]} \,.
\end{equation*}
We then split the contribution into two parts: \(\cJ_{(0,T]} = \cJ_1 +\cJ_2\) where \(cJ_1\) contains the terms that are ``most affected'' by changing the measure from \(\bbP\) to \(\bbP_{\tau}\).
More precisely, we set
\begin{equation*}
  \begin{split}
 \cJ_1&:= \sum_{k=1}^{m}   \cJ_{(\tau_{i-1},\tau_i]} \ind_{\{\tau_i-\tau_{i-1}\in (1,2]\}} ,\\
\cJ_2&:= \cJ_{(0,T]} - \cJ_1 =\cJ_{(0,r]}+ \sum_{k=1}^{m}   \cJ_{(\tau_{i-1},\tau_i]} \ind_{\{\tau_i-\tau_{i-1}\notin (1,2]\}} + \cJ_{(s,T]} .
 \end{split}
\end{equation*}
We then have that \(\cA^{\cc} \subset \cA_1 \cup \cA_2\) an so \(\bbP_{\tau}(\cA^{\cc}) \leq \bbP_{\tau}(\cA_1)+\bbP_{\tau}(\cA_2)\), with
\[
\cA_1 := \left\{ \cJ_{1}\ge  \bbE[\cJ_1] - 2R \sqrt{\rho T} \right\} \quad  \text{ and } \quad \cA_2 := \left\{ \cJ_{2}\le  \bbE[\cJ_2] +R \sqrt{\rho T} \right\}\, ,
\]
where we have also used that \(\bbE[\cJ_1]+\bbE[\cJ_2] = \rho T\).
First of all, using the comparison property of \Cref{lem:domination}, we get that \(\bbP_{\tau}(\cJ_2 \geq t) \leq \bbP(\cJ_2\geq t)\) for any \(t\geq 0\). 
Therefore, \(\bbP_{\tau}(\cA_2)\leq \bbP(\cA_2)\), so that using Chebyshev's inequality and the fact that \(\Var_{\bbP}(\cJ_2) \le \rho T\), we get that
\[
\bbP_{\tau}(\cA_2) \leq \bbP\big( \cJ_{2}\ge  \bbE[\cJ_2] + R \sqrt{\rho T} \big) \leq \frac{\Var_{\bbP}(\cJ_2)}{R^2 \rho T} \leq \frac{1}{R^2} \le \frac{\gep}{4}
\]
the last inequality holding for \(R \geq \gep^{-1}\) with \(\gep\leq \frac14\).
To estimate \(\bbP_{\tau}(\cA_1)\), we need to prove the following.
\begin{claim}
\label{espetvar}
 We have 
 \(
 \bbE[\cJ_1]-\bbE_{\tau}[\cJ_1]\ge c\rho \sum\limits_{i=1}^{m} \ind_{\{\tau_i-\tau_{i-1}\notin (1,2]\}}
 \)
and \( \Var_{\bbP_{\tau}}[\cJ_2]\le 3 \rho T\).
\end{claim}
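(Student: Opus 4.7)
Both assertions rely on the product structure of $\bbP_{\tau}$ provided by \Cref{lem:sizebiased}: conditionally on $\tau$, the blocks $Y_{[\tau_{i-1},\tau_i]}$ are mutually independent under $\bbP_{\tau}$, and each follows the law of $(W_s)_{s\in[0,\rho\Delta\tau_i]}$ conditioned on $W_{\Delta\tau_i}=0$. Writing $N_t$ for the Poisson count of jumps of $W$ in $[0,t]$, the count $\cJ_{(\tau_{i-1},\tau_i]}$ under $\bbP_{\tau}$ has the law of $N_{\rho\Delta\tau_i}$ conditioned on $W_{\Delta\tau_i}=0$. I keep $\tau$ fixed throughout and work block by block, exploiting that outside the range $[\tau_0,\tau_m]=[r,s]$ the law of $Y$ under $\bbP_{\tau}$ coincides with that under $\bbP$.

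For the shift of expectations, independence reduces $\bbE[\cJ_1]-\bbE_{\tau}[\cJ_1]$ to a sum over indices with $\Delta\tau_i\in(1,2]$ of block gaps $\rho\Delta\tau_i-\E[N_{\rho\Delta\tau_i}\mid W_{\Delta\tau_i}=0]$. My plan is to establish a uniform lower bound of order $\rho$ for each such block gap, which, summed over the relevant indices, delivers a lower bound of the stated type. The computation proceeds in the spirit of~\eqref{Mecke1}: decompose $N_{\Delta\tau_i}$ as the sum of two independent Poisson counts on $[0,\rho\Delta\tau_i]$ and $[\rho\Delta\tau_i,\Delta\tau_i]$, note that for $\Delta\tau_i\in(1,2]$ the return probability $\P(W_{\Delta\tau_i}=0)$ is bounded between two positive constants, and use the unimodality statement of \Cref{lem:unimod} to check that the conditioning on $\{W_{\Delta\tau_i}=0\}$ systematically biases against the Poisson mean. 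This gives a deficit of order $\rho$ per block, uniform in $\rho\in(0,\tfrac12)$ and in the precise value of $\Delta\tau_i$.

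For the variance bound I split $\cJ_2=\cJ_{(0,r]}+\cJ_{(s,T]}+\sum_{i:\Delta\tau_i\notin(1,2]}\cJ_{(\tau_{i-1},\tau_i]}$. Because the density defining $\bbP_{\tau}$ is supported on the interval $[r,s]$, the boundary counts are genuine Poisson variables with variances $\rho r$ and $\rho(T-s)$, contributing at most $\rho T$ in total. The inner terms are independent under $\bbP_{\tau}$, so it suffices to bound $\Var_{\bbP_{\tau}}[\cJ_{(\tau_{i-1},\tau_i]}]\le C\rho\Delta\tau_i$ blockwise. This is a direct factorial-moment computation for $N_{\rho\Delta\tau_i}$ under $\{W_{\Delta\tau_i}=0\}$: one controls $\E[N_t(N_t-1)\mid W_u=0]$ by decomposing on the total jump count and bounding pointwise kernels $\P(W_u=x)$ by their maximum $\P(W_u=0)$ via \Cref{lem:unimod}. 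Summing together with $\sum_{i:\Delta\tau_i\notin(1,2]}\Delta\tau_i\le s-r\le T$ gives an inner contribution of order $\rho T$, and the constant $3$ is then reached by combining with the boundary piece.

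The hardest step is clearly the per-block expectation gap of the first assertion: obtaining a universal $c=c(J)>0$ such that $\rho\Delta\tau_i-\E[N_{\rho\Delta\tau_i}\mid W_{\Delta\tau_i}=0]\ge c\rho$ for every $\Delta\tau_i\in(1,2]$ and every $\rho\in(0,\tfrac12)$. The challenge lies in quantifying the conditioning effect with a constant that is independent of $\rho$ and of the precise value of $\Delta\tau_i$ in $(1,2]$. My approach is to work with the joint law of $(N_{\rho\Delta\tau_i},W_{\rho\Delta\tau_i},W_{\Delta\tau_i}-W_{\rho\Delta\tau_i})$, exploit that $\{W_{\Delta\tau_i}=0\}$ forces $W_{\rho\Delta\tau_i}=-(W_{\Delta\tau_i}-W_{\rho\Delta\tau_i})$, and apply the unimodality and comparison ideas of \Cref{lem:unimod} and \Cref{lem:domination} to isolate the contribution of the no-jump event $\{N_{\rho\Delta\tau_i}=0\}$, which is boosted under the conditioning by a positive amount since $\P(W_{(1-\rho)\Delta\tau_i}=0)\ge \P(W_{\Delta\tau_i}=0)$. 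Once this is established the rest of the plan is pure summation.
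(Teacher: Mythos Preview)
Your approach to the expectation shift is essentially the paper's: work block by block and, for each interval with $\Delta\tau_i\in(1,2]$, isolate the no-jump event. The paper makes this precise via the identity $\cJ=(\cJ\vee 1)-\ind_{\{\cJ=0\}}$: since $x\mapsto x\vee 1$ is non-decreasing, \Cref{lem:domination} gives $\bbE[\cJ\vee 1]\ge\bbE_\tau[\cJ\vee 1]$, whence
\[
\bbE[\cJ]-\bbE_\tau[\cJ]\ge \bbP_\tau(\cJ=0)-\bbP(\cJ=0)=e^{-\rho\Delta\tau_i}\Big(\tfrac{K((1-\rho)\Delta\tau_i)}{K(\Delta\tau_i)}-1\Big).
\]
The order-$\rho$ lower bound then comes from the Lipschitz property of $K$ on $[0,2]$, which gives $\tfrac{K((1-\rho)t)}{K(t)}-1\ge c\rho t$. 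Your sketch invokes only the qualitative inequality $\P(W_{(1-\rho)t}=0)\ge\P(W_t=0)$, which alone yields $\ge 0$, not $\ge c\rho$; but the missing quantitative step is routine once you note that $K$ is smooth with $K'(t)<0$ on compacts.

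For the variance there is a real gap. The statement as printed (and hence your argument) concerns $\cJ_2$, but the paper's proof---and the application just above the claim---actually need $\Var_{\bbP_\tau}[\cJ_1]$; this is a typo, as is the indicator $\notin(1,2]$ in the first assertion. For $\cJ_1$ the bound is immediate because $\rho\Delta\tau_i\le 1$ on the relevant blocks:
\[
\Var_{\bbP_\tau}[\cJ_{(\tau_{i-1},\tau_i]}]\le\bbE_\tau[\cJ^2]\le\bbE[\cJ^2]=\rho\Delta\tau_i+(\rho\Delta\tau_i)^2\le 3\rho\Delta\tau_i.
\]
Your route for $\cJ_2$ breaks on intervals with large $\Delta\tau_i$: the factorial-moment bound you describe yields only $\bbE_\tau[\cJ(\cJ-1)]\le(\rho\Delta\tau_i)^2$ and hence $\bbE_\tau[\cJ^2]\le(\rho\Delta\tau_i)^2+\rho\Delta\tau_i$. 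To extract $\Var_{\bbP_\tau}\le C\rho\Delta\tau_i$ from this you would still need $(\rho\Delta\tau_i)^2-(\bbE_\tau[\cJ])^2\le C'\rho\Delta\tau_i$, i.e.\ a \emph{lower} bound $\bbE_\tau[\cJ]\ge\rho\Delta\tau_i-O(1)$ uniformly in $\Delta\tau_i$, which your sketch does not supply and which is not a ``direct factorial-moment computation''. The fix is simply to recognise the typo and prove the variance bound for $\cJ_1$ instead.
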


With \Cref{espetvar} at hand, on the event $B$  we  have 
\( \bbE[\cJ_2]-\bbE_{\tau}[\cJ_2] \geq c \rho R^{-1} T^{1\wedge \alpha}\ge 3R \sqrt{\rho T} \), where the second inequality holds if $\rho \geq C_0 T^{1-2(\alpha\wedge 1)}$ and $C_0$ is sufficiently large.
Therefore,  we obtain that on the event~\(B\)
\[
\bbP_{\tau}(\cA_2) \leq \bbP_{\tau}\Big( \cJ_{2}\le  \bbE_{\tau}[\cJ_2] - R \sqrt{\rho T} \Big) \leq \frac{\Var_{\bbP_{\tau}}[\cJ_2]}{R^2 \rho T}\le 
 \frac{3}{R^2}  \leq \frac{\gep}{4}\,,
\]
using Chebyshev's inequality, then \Cref{espetvar} and taking ~\(R\geq \gep^{-1}\) with \(\gep \leq \frac{1}{12}\) for the last inequality.
This concludes the proof of \Cref{lem:A2}.
\end{proof}

\begin{proof}[Proof of \Cref{espetvar}]
Note that since the number of jump is independent on each interval $(\tau_{i-1},\tau_i]$ it is sufficient to make the computation for one interval and then sum it.
Using the stochastic comparison of~\Cref{lem:domination}, we get that \(\bbE[f(\cJ_{(\tau_i-\tau_{i-1}]})] \geq \bbE_{\tau}[f(\cJ_{(\tau_i-\tau_{i-1}]})]\) for any non-decreasing function \(f:\bbR_+\to \bbR_+\).
Therefore, using the identity \(\bbE[\cJ] =\bbE[\cJ\vee 1] - \bbP(\cJ=0)\), and since \(x\mapsto x\vee 1\) is non-decreasing, we get that 
\begin{equation*}\begin{split}
\bbE[\cJ_{(\tau_i-\tau_{i-1}]}]- \bbE_{\tau}[\cJ_{(\tau_i-\tau_{i-1}]}]
&\geq \bbP_{\tau}\big[\cJ_{(\tau_i-\tau_{i-1}]}=0\big]- \bbP\big[\cJ_{(\tau_i-\tau_{i-1}]}=0 \big]\\
&= \frac{e^{-\rho(\tau_i-\tau_{i-1}) } \P(W_{(1-\rho)(\tau_i-\tau_{i-1})}=0 )}{\P(W_{(\tau_i-\tau_{i-1})}=0 )} - e^{-\rho(\tau_i-\tau_{i-1}) }  \,.
\end{split}\end{equation*}
Then, using the fact that \(K(t) = \beta_0 \P(W_t=0)\) is Lipschitz on \([0,2]\), we get that \(\frac{K( (1-\rho)t)}{K(t)} -1\geq c \rho t \) for any \(t\in (1,2]\) and \(\rho \in (0,\frac12)\).
Therefore, for \(\tau_i-\tau_{i-1}\in (1,2]\) and \(\rho \in (0,\frac12)\), we get that 
\[
\bbE[\cJ_{(\tau_i-\tau_{i-1}]}]- \bbE_{\tau}[\cJ_{(\tau_i-\tau_{i-1}]}] \geq  e^{-2\rho} c\rho (\tau_i-\tau_{i-1}) \geq c' \rho \,.
\]
This concludes the lower bound on \(\bbE[\cJ_2]- \bbE_{\tau}[\cJ_2]\).
For the variance we simply observe that, using again  \Cref{lem:domination},
\begin{equation*}
 \Var_{\bbP_{\tau}}[\cJ_{(\tau_i-\tau_{i-1}]}]\le \bbE_{\tau}\left[ (\cJ_{(\tau_i-\tau_{i-1}]})^2\right] 
 \le  \bbE\left[ (\cJ_{(\tau_i-\tau_{i-1}]})^2\right] \,.
\end{equation*}
Now, the right-hand side is equal to \(\rho(\tau_i-\tau_{i-1}) +  \rho^2(\tau_i-\tau_{i-1})^2 \leq 3 \rho(\tau_i-\tau_{i-1})\), since \(\tau_i-\tau_{i-1} \in (1,2]\). 
Summing over \(i\) we obtain that \(\Var_{\bbP_{\tau}}[\cJ_2]\le 3 \rho T\).
\end{proof}

\section{Proof of \texorpdfstring{\Cref{prop:key} case~\ref{iii2/3}}{the proposition case (iii)}}
\label{sec:proofII}

\subsection{Organisation and decomposition of the proof}

As above, we first  introduce the events \(\cA\) and \(B\) which we will use in~\eqref{singlecell-B}.
Similarly to the previous section, we consider an event of the form
\begin{equation}
  \label{def:Amarginal}
\cA = \left\{ F_T - \bbE[F_T] \leq - \gep^{-1} \,\sqrt{\bbVar(F_T)} \right\}
\end{equation}
for some $\cF_T$-measurable random variable $F_T$ and some \(R\) large (in fact, \(R=\gep^{-1}\) is enough for our purpose). 
Thanks to Chebyshev's inequality, it is clear that $\bbP(\cA)\leq \gep^{2}\le  \gep$.
Similarly to what was done in \Cref{sec<2/3}, we use a functional $F_{T}$ that counts the number of jumps, but we also weight them by a coefficient which depends on their amplitude.
Recalling the Poisson construction in Section~\ref{sec:propertiesRW}, for an interval \(I \subset [0,T]\), define
\begin{equation}
  \label{def:F}
 F_{T}:= \sum_{i: \vartheta_i\in (0,T]} \xi(U_i) \ind_{\{ U_i K(T) \le 2 \beta_0\}} \quad  \text{ where } \quad  \xi(k):=  k^{1/3} \gp(k)^{-2} \,.
\end{equation}
The new measure $\bbP_{\tau}$ has the effect to make the walk~$Y$ jump less frequently (recall \Cref{lem:sizebiased} and \Cref{lem:domination}), so that $\bbE_{\tau}[F_T]\le \bbE[F_T]$. 
It affects jumps of different size in a different way and our specific choice of \(\xi(\cdot)\) is designed to make to make the renormalized shift of the expectation $\frac{\bbE[F_T]- \bbE_{\tau}[F_T]}{\sqrt{\bbVar(F_T)}}$ as large as possible.
Since $\bbP(\cA)\leq \gep$ almost by definition, we only need to find an event $B$ such that \eqref{singlecell-B} holds.
The event $B$ needs to ensure that the expectation shift $\bbE[F_T]- \bbE_{\tau}[F_T]$ is typical. 
We set
\begin{equation}\label{defbst}
B = B_{[r,s]}^{(\delta)}:= \bigg\{ \sum_{j=1}^{|\tau\cap[r,s]|} \xi\bigg( \frac 1 {K(\Delta \tau_j)} \bigg) \geq  \delta \frac{S(T)}{ TK(T)} \bigg\} \quad \text{ with }  S(T):=\int^{\frac 1 {K(T)}}_1 \frac{\dd s}{s \varphi(s)^3} \,,
\end{equation}
where $\Delta \tau_i:=\tau_{i}-\tau_{i-1}$ and $\delta=\delta(\gep)=\gep^5$.
A first requirement for the proof is to show that $B$ is typical.
\begin{lemma}
  \label{lem:B3}
  There exists $\gep_0$ such that for any \(\gep\in (0,\gep_0)\), setting $\delta=\gep^5$, for any $T$ sufficiently large and \([r,s]\subset [0,T]\) with \(s-r\geq \gep T\), we have 
  \(
  \bQ_{[r,s]} (B^{\cc}) \leq \gep/2 \,.
  \)
\end{lemma}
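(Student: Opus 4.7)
The plan is to reduce the proof to a Chebyshev estimate on a truncated i.i.d.\ sum over the renewal increments. First I would use \Cref{lem:removecond} to remove the conditioning $\{s\in\tau\}$ at the cost of a harmless constant, and assume $r=0$ WLOG, so it suffices to bound $\bQ(B^\cc)$ for a free renewal $\tau$ with inter-arrival density $K$.

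For an $\eta = \eta(\gep)$ to be fixed, set $N^* := \lceil \eta/(TK(T)) \rceil$ and consider the first $N^*$ (i.i.d.) increments. The standard Markov estimate \eqref{decqwer}, combined with $\alpha=1/2$ and $\bQ[\tau_1\wedge A]\sim c A^{1/2} L(A)$ coming from \eqref{formkt}--\eqref{renewal}, gives $\bQ(\tau_{N^*}>\gep T/2)\leq c\eta/\gep^{1/2}$, which is $\leq \gep/4$ for $\eta$ sufficiently small. On the complementary event all $\tau_j$ with $j\leq N^*$ lie in $[0,s/2]\subset[0,s]$, so the sum appearing in $B$ dominates the truncated sum
\[
\Sigma^* := \sum_{j=1}^{N^*} Y_j, \qquad Y_j := \xi\Big(\tfrac{1}{K(\Delta\tau_j)}\Big) \ind_{\{\Delta\tau_j\leq T\}}.
\]
A change of variables $s=1/K(t)$ based on the implicit relation \eqref{implicit} with $\gamma=2/3$ yields
\[
\bQ[Y_1] \;\sim\; c\int_0^T\frac{\dd t}{t\,\gp(1/K(t))^3}\;\sim\; c'\int_1^{1/K(T)}\frac{\dd s}{s\,\gp(s)^3} \;=\; c' S(T),
\]
and hence $\bQ[\Sigma^*] \asymp \eta\, S(T)/(TK(T))$. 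Since $0\leq Y_j\leq \xi(1/K(T))$, one has $\Var_\bQ(\Sigma^*)\leq N^*\xi(1/K(T))\bQ[Y_1]$, so Chebyshev's inequality gives
\[
\bQ\big(\Sigma^*\leq \tfrac12\bQ[\Sigma^*]\big)\;\leq\; \frac{4\,\xi(1/K(T))}{N^*\,\bQ[Y_1]}\;\leq\; \frac{C\,\xi(1/K(T))\,TK(T)}{\eta\,S(T)}.
\]
Using \eqref{implicit} a second time, $\xi(1/K(T))\,TK(T)=TK(T)^{2/3}\gp(1/K(T))^{-2}\sim c\,\gp(1/K(T))^{-3}$, whence by the very definition \eqref{defpsi} of $\psi$ the right-hand side is of order $1/(\eta\,\psi(T))$, which tends to $0$ as $T\to\infty$ since $\psi(T)\to\infty$. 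Combining the two estimates, the sum in $B$ is at least $\tfrac12\bQ[\Sigma^*]\geq c''\eta\,S(T)/(TK(T))$ with probability $\geq 1-\gep/2$, and it remains to take $\eta=\eta(\gep)$ so that $c''\eta\geq \gep^5=\delta$, which is compatible with the first smallness requirement since both constraints have the form $\eta\geq c\gep^k$.

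The main obstacle is the scaling computation $\bQ[Y_1]\asymp S(T)$, whose proof requires careful use of the implicit asymptotic \eqref{implicit} to justify the change of variables $s=1/K(t)$ and to absorb slowly varying factors via Potter's bound \cite[Thm.~1.5.6]{BGT89}. Once this is in place, the identification $\gp(1/K(T))^3\,S(T)=\psi(T)$ from \eqref{defpsi} closes the argument cleanly, and this is precisely what ensures that the estimate is robust for any slowly varying function $\gp$, in line with the universality statement of \Cref{rele2}.
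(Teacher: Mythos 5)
Your proof is correct and follows essentially the same strategy as the paper: remove the conditioning $\{s\in\tau\}$ via \Cref{lem:removecond}, use the truncated Markov bound~\eqref{decqwer} to ensure the first $N^*\asymp \eta/(TK(T))$ renewal increments lie in $[0,s/2]$, identify the mean $\bQ[Y_1]\asymp S(T)$ by the same change of variables based on~\eqref{implicit}, and conclude by Chebyshev. The one genuine difference is the variance estimate: rather than computing the asymptotics of $\bQ[Y_1^2]$ explicitly as the paper does (its~\eqref{1stand2nd-bis}), you use the deterministic bound $Y_j\leq C\,\xi(1/K(T))$ (valid modulo Potter, since $\xi$ is regularly varying of index $1/3>0$) to get $\Var(\Sigma^*)\leq C N^*\,\xi(1/K(T))\,\bQ[Y_1]$. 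This is cruder by a factor $\psi(T)$—your Chebyshev bound is of order $1/(\eta\,\psi(T))$ rather than the paper's $1/(\delta^{1/2}\psi(T)^2)$—but both tend to zero since $\psi(T)\to\infty$, so the conclusion is unaffected. Your version is somewhat cleaner in that it sidesteps the second-moment integral; the paper's version gives a sharper rate, which is immaterial here.

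One cosmetic slip at the end: you write that ``both constraints have the form $\eta\geq c\gep^k$,'' but the Markov step actually imposes an \emph{upper} bound $\eta\lesssim\gep^{3/2}$ (so that $c\eta/\gep^{1/2}\leq\gep/4$), while matching $\delta=\gep^5$ imposes the \emph{lower} bound $\eta\gtrsim\gep^5$. These are of opposite type but compatible for $\gep$ small, so the conclusion stands; just fix the wording.
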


To estimate \(\bbP_{\tau}(\cA^{\cc})\) and conclude the proof of \eqref{singlecell-B}, we decompose $F_{T} = F_1+F_2$ into two parts~$F_1$ and~$F_2$, where $F_1$ is the sum containing the terms which are ``most affected'' by changing the measure from $\bbP$ to $\bbP_{\tau}$.
For a given realization of \(\tau = (\tau_j)_{j=0}^m\) with \(\tau_0=r\), \(\tau_m=s\), we define
\begin{equation}
  \label{def:F1F2}
 F_1:=\sum_{j=1}^m H_{j}   \qquad
  \text{ with } \quad   H_j := \sum_{i: \vartheta_i \in (\tau_{j-1},\tau_j]}   \xi(U_i)   \ind_{\{\beta_0 \leq U_i  K(\Delta \tau_j) \le 2 \beta_0\}} \,,
\end{equation}
and \(F_2:= F_{T}- F_1\).
Then, we set
\[
\cA_1:= \left\{ F_1\ge \bbE[ F_1] - 2 \gep^{-1} \sqrt{\Var_{\bbP}(F_T)} \right\}   \  \text{ and } \  
  \cA_2:= \left\{ F_2\ge \bbE[ F_2 ] + \gep^{-1} \sqrt{\Var_{\bbP}(F_T)} \right\} \,,
\]
so that $\cA^{\cc}\subset \cA_1\cup \cA_2$ and in particular
\(\bbP_{\tau}(A^{\cc})\le \bbP_{\tau}(\cA_1)+ \bbP_{\tau}(\cA_2)\).
We then need the following estimates on the variances of $F_T, F_1$ and of the expectation shift $\bbE[F_1]-\bbE_{\tau}[F_1]$.
Recall that \(S(T)\) is defined in~\eqref{defbst}.
\begin{lemma}
  \label{lem:var1}
There are constants $c,C>0$ such that 
\begin{equation}\label{frwrt}
c  \rho T S(T)\le  \Var_{\bbP}[F_T]  \le C \rho T S(T)\,.
\end{equation}
Additionally, if \(\rho\) is small enough we have \(\Var_{\bbP_{\tau}}[F_1] \leq 2 \Var_{\bbP}[F_T]\) \,.
\end{lemma}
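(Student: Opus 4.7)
The plan is to treat the two bounds separately. For the two-sided estimate on $\Var_\bbP[F_T]$ I will use Campbell's formula for the Poisson point process $\cU$ together with the regular variation of $\bar\mu$. For the bound on $\Var_{\bbP_\tau}[F_1]$, the idea is to first use the block independence of \Cref{lem:sizebiased}-\ref{i:independence}, then the stochastic comparison of \Cref{lem:domination} to pass from $\bbP_\tau$ to $\bbP$, and finally to verify that the squared-mean contribution $\sum_j \bbE[H_j]^2$ is of order $\rho\,\Var_\bbP[F_T]$---a cancellation arranged precisely by the choice $\xi(k)=k^{1/3}\varphi(k)^{-2}$.

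For the first claim, Campbell's formula yields
\begin{equation*}
\Var_\bbP[F_T] = \rho T \sum_{k:\, kK(T)\leq 2\beta_0} \xi(k)^2 \bar\mu(k)\,.
\end{equation*}
A summation by parts as in~\eqref{lassum} gives $\bar\mu(k)\sim 2(1+\gamma)\varphi(k)k^{-(1+\gamma)}$, so with $\gamma=\frac23$ one obtains $\xi(k)^2\bar\mu(k)\asymp k^{-1}\varphi(k)^{-3}$. By slow variation the sum is comparable to $\int_1^{2\beta_0/K(T)} \frac{\dd s}{s\varphi(s)^3}$; moreover, the portion over $(1/K(T),2\beta_0/K(T)]$ is of order $\varphi(1/K(T))^{-3}$, which is negligible with respect to $S(T)$ since by~\eqref{defpsi} one has $\psi(T)=\varphi(1/K(T))^3 S(T)\to\infty$. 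Both bounds of~\eqref{frwrt} follow.

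For the second claim, \Cref{lem:sizebiased}-\ref{i:independence} gives $\Var_{\bbP_\tau}[F_1]=\sum_j \Var_{\bbP_\tau}[H_j]\leq \sum_j \bbE_\tau[H_j^2]$. Each $H_j$ is a non-negative non-decreasing functional of $\bar\cU$, hence so is $H_j^2$, and \Cref{lem:domination} gives $\bbE_\tau[H_j^2]\leq \bbE[H_j^2]=\Var_\bbP[H_j]+\bbE[H_j]^2$. Because the constraints defining $F_1$ and $F_2$ involve disjoint ranges of the $U_i$'s of each Poisson point, $F_1$ and $F_2$ are independent under $\bbP$, so $\sum_j \Var_\bbP[H_j]=\Var_\bbP[F_1]\leq \Var_\bbP[F_T]$. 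It remains to bound $\sum_j \bbE[H_j]^2$. Applying Campbell to $H_j$ on $k\in[\beta_0/K(\Delta\tau_j),2\beta_0/K(\Delta\tau_j)]$ gives, via the same asymptotics,
\begin{equation*}
\bbE[H_j]\asymp \rho\,\Delta\tau_j\, K(\Delta\tau_j)^{1/3}\varphi(1/K(\Delta\tau_j))^{-1},\qquad \Var_\bbP[H_j]\asymp \rho\,\Delta\tau_j\,\varphi(1/K(\Delta\tau_j))^{-3},
\end{equation*}
and therefore the crucial ratio
\begin{equation*}
\frac{\bbE[H_j]^2}{\Var_\bbP[H_j]}\asymp \rho\,\Delta\tau_j\, K(\Delta\tau_j)^{2/3}\varphi(1/K(\Delta\tau_j))\asymp \rho,
\end{equation*}
where the last $\asymp$ is precisely~\eqref{implicit} with $\gamma=\frac23$. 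Summing in $j$ and combining, $\Var_{\bbP_\tau}[F_1]\leq (1+C\rho)\Var_\bbP[F_T]\leq 2\Var_\bbP[F_T]$ for $\rho$ small. The main obstacle is this final cancellation: the exponents in $\xi$ are tuned so that $\bbE[H_j]^2/\Var_\bbP[H_j]$ is of order $\rho$ \emph{uniformly in} $\Delta\tau_j$, which is what keeps the squared-mean term from overwhelming the variance.
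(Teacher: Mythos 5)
Your proof follows essentially the same route as the paper's: Campbell's/Mecke's formula to express $\Var_\bbP[F_T]$ and the per-block quantities $\bbE[H_j]$, $\Var_\bbP[H_j]$, then the block independence of \Cref{lem:sizebiased} plus the monotone comparison of \Cref{lem:domination} to reduce $\Var_{\bbP_\tau}[H_j]\le\bbE[H_j^2]=\Var_\bbP[H_j]+\bbE[H_j]^2$, then the observation that $\bbE[H_j]^2/\Var_\bbP[H_j]\asymp\rho$ uniformly in $\Delta\tau_j$ (which is exactly why $\xi$ is tuned as it is), and finally $\Var_\bbP[F_1]\le\Var_\bbP[F_T]$ by Poisson restriction to disjoint regions. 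The one point to tighten: you assert a pointwise asymptotic $\bar\mu(k)\sim 2(1+\gamma)\varphi(k)k^{-(1+\gamma)}$, but under the standing hypotheses ($J$ symmetric, non-increasing, regularly varying) the increments $J(k)-J(k+1)$ need not have a nice pointwise asymptotic — only averaged versions are guaranteed. The paper sidesteps this by the dyadic/Ces\`aro estimate $\sum_{k\le n}f(k)\bar\mu(k)\asymp\sum_{k\le n}f(k)J(k)$ for regularly varying $f$, which delivers the same $\asymp$ in~\eqref{frwrt} and in the $H_j$-moment computations without needing pointwise control of $\bar\mu$. Replacing your pointwise claim by this Ces\`aro comparison (as in the display after~\eqref{exactexpress}) makes the argument airtight; everything else in your write-up is correct and matches the paper.
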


\begin{lemma}
  \label{lem:shiftE}
  There is a constant \(c>0\) such that, for any \(\rho \in (0,\frac12)\), we have
  \[
  \bbE[F_1] - \bbE_{\tau}[F_1] \geq c \rho  \sum_{j=1}^{|\tau|} \xi\bigg(  \frac 1 {K(\Delta(\tau_j))}\bigg) \,.
  \]
  In particular, on the event \(B\) we have
  \begin{equation}\label{frwrt2}
   \bbE[F_1] - \bbE_{\tau}[F_1] \geq \frac{c \rho \delta S(T)}{TK(T)}.
  \end{equation}
\end{lemma}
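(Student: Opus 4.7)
The plan is to establish a per-interval lower bound and then sum. By the block-independence under $\bbP_\tau$ (Lemma~\ref{lem:sizebiased}(i)) together with $\bbE[w(s,t,Y)]=1$, we have the clean decomposition $\bbE[F_1] - \bbE_\tau[F_1] = \sum_{j=1}^{m}(\bbE[H_j] - \bbE_\tau[H_j])$, so the task reduces to bounding each term. Fix $j$ and write $t := \Delta\tau_j$ for brevity.

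Applying Mecke's formula to the Poisson process $\cU$, and following the Palm calculus already used to derive~\eqref{Mecke1} --- namely, inserting a point $(k,x,s)$ shifts $Y$ by $x$ after time $s$, and averaging out the rest of the walk turns $w(\tau_{j-1},\tau_j,Y)$ into $\P(W_t=x)/\P(W_t=0)$ --- we obtain
\begin{align*}
\bbE[H_j] &= \rho t \sum_{k\,:\,\beta_0 \leq kK(t) \leq 2\beta_0} \bar\mu(k)\,\xi(k),\\
\bbE_\tau[H_j] &= \rho t \!\!\!\!\sum_{k\,:\,\beta_0 \leq kK(t) \leq 2\beta_0}\!\!\!\! \bar\mu(k)\,\xi(k)\,\frac{1}{2k+1}\sum_{x=-k}^{k} \frac{\P(W_t=x)}{\P(W_t=0)}.
\end{align*}
Subtracting yields
\[
\bbE[H_j] - \bbE_\tau[H_j] = \rho t \!\!\!\!\sum_{k\,:\,\beta_0 \leq kK(t) \leq 2\beta_0}\!\!\!\! \bar\mu(k)\,\xi(k) \left(1 - \frac{1}{2k+1}\sum_{x=-k}^{k} \frac{\P(W_t=x)}{\P(W_t=0)}\right).
\]

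The first key step is to bound the bracketed factor below by a universal constant $c_0>0$, uniformly over admissible $(t,k)$. By Lemma~\ref{lem:unimod}, $\P(W_t=x)\leq \P(W_t=\lfloor k/2\rfloor)$ for $k/2\leq |x|\leq k$, reducing the task to showing $\P(W_t=\lfloor k/2\rfloor)/\P(W_t=0) \leq 1-2c_0$. In the scaling regime (large $t$, so $k\asymp 1/K(t)$), the local central limit theorem for $\gamma$-stable walks (see e.g.~\cite[Ch.~9]{GK68}) yields $\P(W_t=x)/\P(W_t=0)\to g_\gamma(xK(t))/g_\gamma(0)$, where $g_\gamma$ is the symmetric stable density, strictly decreasing on $(0,\infty)$; evaluated at $xK(t)\in[\beta_0/2,\beta_0]$, this gives the required strict inequality. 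The short-time regime is handled by a direct computation (for small $t$ the ratio $\P(W_t=x)/\P(W_t=0)$ is dominated by $tJ(x)$, hence very small), and compactness glues the two regimes together. The second key step is to evaluate the remaining sum: by regular variation of $\xi$, $\xi(k)\asymp \xi(1/K(t))$ uniformly over the summation range, and by~\eqref{lassum} with summation by parts,
\[
\sum_{k\,:\,\beta_0 \leq kK(t) \leq 2\beta_0} \bar\mu(k) \;\asymp\; \frac{1}{K(t)} J\!\left(\frac{1}{K(t)}\right) = \varphi\!\left(\frac{1}{K(t)}\right) K(t)^{\gamma} \;\asymp\; \frac{1}{t},
\]
where the last equivalence is the implicit relation~\eqref{implicit}.

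Combining the two steps produces $\bbE[H_j]-\bbE_\tau[H_j]\geq c\rho\,\xi(1/K(\Delta\tau_j))$. Summing over $j$ yields the first inequality of the lemma, and the ``in particular'' statement follows immediately from the defining inequality of $B$ in~\eqref{defbst}. The main obstacle is the uniform lower bound on the bracket: beyond the asymptotic statement of the local CLT, uniformity over $(t,k)$ in the admissible range requires a non-asymptotic version together with a compactness/continuity argument for the moderate regime of~$t$.
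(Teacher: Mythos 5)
Your structural decomposition matches the paper's proof exactly: the block-independence reduces to a per-interval bound, Mecke's formula and Lemma~\ref{lem:sizebiased} give the same two expressions, and the final step via regular variation (as in~\eqref{eq:EHj}) and summing over~$j$ is identical. The only place you diverge — and it is the crux — is in bounding the bracketed factor
\[
1 - \frac{1}{2k+1}\sum_{x=-k}^{k} \frac{\P(W_t=x)}{\P(W_t=0)}
\]
from below.

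You reach for the local CLT plus a compactness argument, and you honestly flag this as "the main obstacle." The paper sidesteps the issue entirely with an elementary bound: since $\sum_{x=-k}^{k} \P(W_t=x) \leq 1$ (it is a probability), one has
\[
\frac{1}{2k+1}\sum_{x=-k}^{k} \frac{\P(W_t=x)}{\P(W_t=0)} \;\leq\; \frac{1}{(2k+1)\,\P(W_t=0)},
\]
and because $\P(W_t=0)=\beta_0^{-1}K(t)$ and the summation range of $H_j$ imposes $k \geq \beta_0 K(t)^{-1}$, the denominator satisfies $(2k+1)\P(W_t=0) \geq 2k\beta_0^{-1}K(t) \geq 2$. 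Hence the bracket is $\geq \tfrac12$, uniformly and with no asymptotics needed. This is precisely what the restriction $U_i \geq \beta_0/K(\Delta\tau_j)$ in the definition of $H_j$ is engineered to deliver. Your route would in principle work, but it trades a one-line probability bound for a non-asymptotic local limit theorem and a delicate three-regime matching argument whose details you correctly suspect would be painful; in the context of this paper I would regard that as a gap in execution rather than a genuinely different proof, since the needed uniformity is not established. The observation that $\P(W_t=x)\le \P(W_t=\lfloor k/2\rfloor)$ for $|x|\ge k/2$ is correct but unnecessary once you use the crude bound $\sum_x \P(W_t=x)\le 1$ together with the cutoff on $k$.
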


\begin{rem}
The fact that the quantity $S(T)$ appears both in the expression of the variance of $F_T$ in~\eqref{frwrt} and in that of the typical value for the expectation shift \(\bbE[F_1] - \bbE_{\tau}[F_1]\) is not a coincidence, but is a consequence of the choice we have made for $\xi$. Having the same integral expression appearing in both computation turns out to be optimal for our purpose.
\end{rem}

\begin{proof}[Conclusion of the proof of \Cref{prop:key}-\ref{iii2/3}] Let us first observe that we can without loss of generality assume that $\gep$ is as small as desired.
Thanks to the fact that \(\bbP(\cA)\leq \gep\) and because of \Cref{lem:B3}, we only need to prove the first part of~\eqref{singlecell-B}.
Using that \(\bbP_{\tau}(A^{\cc})\le \bbP_{\tau}(\cA_1)+ \bbP_{\tau}(\cA_2)\) we therefore need to show that 
\begin{equation}
\bbP_{\tau}(\cA_1)\ind_B\le \frac{\gep}{4}  \quad  \text{ and }  \quad \bbP_{\tau}(\cA_2)\le  \frac{\gep}{4} \,.
\end{equation}

Using the stochastic comparison of \Cref{lem:domination} and since \(\xi\) is a non-negative function (so~\(F_2\) is a non-decreasing function of \(\bar \cU\)), we have that \(\bbP_{\tau}(\cA_2) \leq \bbP(\cA_2)\).
Applying Chebyshev's inequality and then using that \(\Var_{\bbP}\left(F_2\right)\leq \Var_{\bbP}\left(F_T\right)\), we therefore get that (assuming that $\gep\le 1/4$)
\[
\bbP_{\tau}(\cA_2)\le \bbP(\cA_2)\le \frac{\gep^2\Var_{\bbP}\left(F_2\right)}{ \Var_{\bbP}\left(F_T\right)}\le  \gep^2 \le \gep/4\,.
\]

Turning now to $\cA_1$, combining \eqref{frwrt2} and \eqref{frwrt}, we have on the event \(B\) that
\begin{equation*}
 \frac{\big(\bbE_{\tau}[F_1] - \bbE[F_1]\big)^2}{\Var_{\bbP}(F_T)} \ge \frac{c^2  \delta^2}{C}\frac{\rho  S(T)}{ T^{3}K(T)^2 } \ge c'\gep^{10} \rho \psi(T) \,.
\end{equation*}
To obtain the last inequality above, we used the fact that $T^{-3} K(T)^{-2} $ is of the same order as $ \gp(1/K(T))$ thanks to~\eqref{implicit} (recall here that \(\gamma=\frac23\)), together with the definition \eqref{defpsi} of \(\psi\), which can be rewritten as \(\psi(T) = \gp(1/K(T))^{3} S(T)\). 
Using now the assumption in item~\ref{iii2/3} of \Cref{prop:key}, we get that this is bounded from below by \(c' \gep^{10} C_0\).
Hence, taking $C_0$ sufficiently large (how large depends on \(\gep\)), we have that $\bbE[F_1] - \bbE_{\tau}[F_1]\ge 3\gep^{-1} \sqrt{\Var_{\bbP}(F_T)}$ on the event \(B\).
Hence, using Chebyshev's inequality and then the second part of \Cref{lem:var1}, we have (for $\gep\le 1/8$)
\[
\bbP_{\tau}(\cA_1) \leq \bbP_{\tau}\left( F_1-\bbE_{\tau}[F_1]  \geq  \gep^{-1} \sqrt{\Var_{\bbP}[F_T]} \right) \le \frac{\gep^2\Var_{\bbP_{\tau}}[F_1]}{\Var_{\bbP}[F_T]} \leq 2\gep^4 \le \gep/4 .
\]
\end{proof}
%


\subsection{Proof of \texorpdfstring{\Cref{lem:B3}}{the lemma}}

As above, let us set $r=0$ to simplify notation. 
Also, as in the proof oc \Cref{lem2}, considering only the sum up to time $s/2$ and removing the conditioning thanks to \Cref{lem:removecond} at the cost of a multiplicative constant \(C\).
We are left with showing that
\[
\bQ \bigg(  \sum_{j=1}^{|\tau\cap [0,s/2]|} \xi\Big( \frac{1}{K(\Delta\tau_j)}\Big) \leq   \frac{\delta S(T)}{TK(T)} \bigg)  \le  \frac{\gep}{2C} \,.
\]
Now, using the assumption $s\le \gep T$, the above is bounded by
\begin{equation}
  \label{twoterms}
\bQ\bigg( |\tau\cap [0,\gep T/2]| \leq  \frac{\sqrt{\delta}}{T K(T)} \bigg)
+ \bQ \bigg(  \sum_{j=1}^{\frac{\sqrt{\delta}}{TK(T)}} \xi\Big( \frac{1}{K(\Delta\tau_j)}\Big)\ind_{\{\Delta \tau_j\le T\}}  \leq  \delta \frac{S(T)}{TK(T)} \bigg) \,.
\end{equation}
We need to bound each term by $\gep/4C$.
For the first term, we use the truncated Markov inequality~\eqref{decqwer} with $A=\gep T/2$ and $k= \sqrt{\delta}/ T K(T)$ so we obtain that for $T\ge T_0(\gep)$ sufficiently large
\begin{equation*}
 \bQ\Big( |\tau\cap [0,\gep T/2]| \leq  \frac{\sqrt{\delta}}{T K(T)} \Big)\le   \frac{\sqrt{\delta} \bQ(\tau_1 \wedge (\gep T/2))}{TK(T)}\le 
 5 \frac{\sqrt{\delta} \gep  K(\gep T/2) }{K(T)}  \le 40 \sqrt{\delta/\gep}\le \gep/4C,
\end{equation*}
To obtain the second and third inequalities, we use the fact  since \(K(\cdot)\) is regularly varying with exponent~\(-\frac 3 2\), so that  \(\bQ(\tau_1\wedge A)\stackrel{A\to \infty}{\sim} 4 T^2 K(T)\) and $K(aT)/K(T)\stackrel{T\to\infty}\sim a^{-3/2}$.
%
In the last inequality we used $\delta=\gep^5$ and assumed that $\gep\ge 1/(160C)$.
To estimate the second term in~\eqref{twoterms}, we need to estimate the mean and variance of the i.i.d.\ variables appearing in the sum.
We are going to prove  that the two following estimates are satisfied (for some  \(c>0\)),
\begin{align}
  \label{1stand2nd}
\bQ\bigg[ \xi\Big( \frac{1}{K(\tau_1)}\Big) \ind_{\{\tau_1 \leq T\}} \bigg] \geq c   S(T),\\
  \label{1stand2nd-bis}
  \lim_{T\to \infty}\frac{T K(T)}{S(T)^2} \bQ\bigg[ \xi\Big( \frac{1}{K(\tau_1)}\Big)^2 \ind_{\{\tau_1 \leq T\}} \bigg] =0.
\end{align}
The first identity \eqref{1stand2nd} guarantees that for $T$ sufficiently large
$$ \bQ \bigg(  \sum_{j=1}^{\frac{\sqrt{\delta}}{TK(T)}} \xi\Big( \frac{1}{K(\Delta\tau_j)}\Big)\ind_{\{\Delta \tau_j\le T\}}  \bigg)\ge \frac{c \sqrt{\delta} S(T)}{TK(T)}\ge \frac{2\delta S(T)}{T K(T)}   $$
provided that \(\delta\) is small enough. 
Hence, by Chebyshev's inequality we have for $T$ sufficiently large
\[
\bQ \bigg(  \sum_{j=1}^{\frac{\sqrt{\delta}}{TK(T)}} \xi\Big( \frac{1}{K(\Delta\tau_j)}\Big)\ind_{\{\Delta \tau_j\le T\}}  \leq  \delta \frac{S(T)}{TK(T)} \bigg)\le  \delta^{-3/2} \frac{T K(T)}{S(T)^2} \bQ\bigg[ \xi\Big( \frac{1}{K(\tau_1)}\Big)^2 \ind_{\{\tau_1 \leq T\}} \bigg]\le \frac{\gep}{4C},
\]
where the last inequality is a consequence of ~\eqref{1stand2nd-bis}.
We are left with the proof of ~\eqref{1stand2nd}-\eqref{1stand2nd-bis}. 
For the mean~\eqref{1stand2nd}, we have
\[
\bQ\bigg[ \xi\Big( \frac{1}{K(\tau_1)}\Big) \ind_{\{\tau_1 \leq T\}} \bigg] = \int_{0}^T K(s)  \xi\Big( \frac{1}{K(s)}\Big)\dd s =\int_{0}^{\frac{1}{K(T)}} \frac{\xi(u)}{u^3|K'\left( K^{-1}(1/u) \right)| } \dd u 
\]
by a simple change of variable.
Now, \cite[Proposition 3.2]{BLirrel} shows that $|K'(s)|\stackrel{s\to \infty}{\sim} \frac{3}{2} s^{-1}K(s)$
and \eqref{implicit} gives that \(K^{-1}(1/u) \stackrel{u\to \infty}{\sim} c_{2/3}  u^{2/3}/\gp(u)\).
Recalling that \(\xi(u)=u^{1/3} \varphi(u)^{-2}\), we therefore get that
\begin{equation}
  \label{asympxiu}
 \frac{\xi(u)}{u^3|K'\left( K^{-1}(1/u) \right)| } \stackrel{u\to \infty}{\sim}  \frac{ 2 K^{-1}(1/u)}{3 u^{5/3} \varphi(u)^{2}} \stackrel{u\to \infty}{\sim} \frac{2 c_{2/3}}{ 3 u \varphi(u)^3} \,.
\end{equation}
Therefore, if the integral \(S(T) = \int_{1}^{1/K(T)} \frac{\dd u}{u \gp(u)^3} \dd u \) diverges, the above shows that the mean is asymptotically equivalent to \(\frac{2}{3} c_{2/3} S(T)\) and \eqref{1stand2nd} holds.
If on the other hand the integral converges, we also get that the mean is convergent and \eqref{1stand2nd} also holds.
For the second moment~\eqref{1stand2nd-bis}, with the same type of computation as for the mean, we obtain
\begin{equation*}
  \bQ\bigg[ \xi\Big( \frac{1}{K(\tau_1)}\Big)^2 \ind_{\{\tau_1 \leq T\}} \bigg] = \int_{0}^T K(s)  \xi\Big( \frac{1}{K(s)}\Big)^2\dd s =\int_{0}^{\frac{1}{K(T)}} \frac{\xi(u)^2}{u^3 |K'\left( K^{-1}(1/u) \right)| } \dd u \,.
\end{equation*}
Then, similarly to~\eqref{asympxiu}, we get 
\[
 \frac{\xi(u)^2}{u^3|K'\left( K^{-1}(1/u) \right)| } \stackrel{u\to \infty}{\sim}  \frac{ 2 K^{-1}(1/u)}{3 u^{4/3} \varphi(u)^{4}} \stackrel{u\to \infty}{\sim} \frac{2 c_{2/3}}{ 3 u^{2/3} \varphi(u)^5} \,.
\]
Since the integral \(\int_1^{s} \frac{\dd u}{u^{2/3} \varphi(u)^5}\) diverges, we get that
\[
\bQ\bigg[ \xi\Big( \frac{1}{K(\tau_1)}\Big)^2 \ind_{\{\tau_1 \leq T\}} \bigg]  \stackrel{T\to \infty}{\sim} \frac{2 c_{2/3}}{3} \int_1^{\frac{1}{K(T)}} \frac{1}{u^{2/3} \varphi(u)^5}  \stackrel{T\to \infty}{\sim} \frac{2 c_{2/3}}{K(T)^{1/3} \varphi(1/K(T))^5} \,.
\]
Recalling also the definition~\eqref{defpsi} of \(\psi(T) = \varphi(1/K(T))^3 S(T)\), the term appearing in~\eqref{1stand2nd-bis} is thus proportional to
\begin{equation*}
\frac{ TK(T)^{2/3}}{ S(T)^2\varphi(1/K(T))^5}= \frac{TK(T)^{2/3} \varphi(1/K(T))}{\psi(T)^{2}}  \stackrel{T\to \infty}{\sim} \frac{c_{2/3}}{ \psi(T)^{2}} \,,
\end{equation*}
using again~\eqref{implicit} for the last asymptotic.
Since $\psi$ diverges, this concludes the proof of~\eqref{1stand2nd-bis}.
\qed

\subsection{Proof of \texorpdfstring{\Cref{lem:var1}}{the lemma}}

From the Poisson construction of Section~\ref{sec:propertiesRW}, we have  
\begin{equation}
  \label{exactexpress}
 \Var_{\bbP}[F_T]= \rho T \times \sum_{k=1}^{2 \beta_0 K(T)^{-1}} \xi(k)^2 \bar \mu(k) \,.
\end{equation}
Now note that if $f$ is a positive regularly varying function, recalling that \(\bar \mu(k) = (2k+1) (J(k)-J(k+1))\) and decomposing over diadic scales we obtain
\[
\sum_{k=1}^{2^n} f(k) \bar \mu(k) \asymp \sum_{i=1}^{n}  f(2^i) 2^i \sum_{k=2^{i-1}+1}^{2^i} (J(k)-J(k+1))  
\asymp \sum_{i=1}^{2^n}  f(2^i) 2^i J(2^i)  \asymp \sum_{k=1}^{2^n} f(k) J(k) \,,
\]
where we have used the notation \(a_n\asymp b_n\) to say that there is a constant \(c>0\) such that \(c^{-1}\leq a_n/b_n \leq c\) for all \(n\geq 1\).
It is then  not difficult to check then that this also holds along the whole sequence of integers rather than just powers of $2$. Looking at the case $f(k)= \xi(k)^2 = k^{2/3} \gp(k)^{-2}$ and replacing sums by integrals, we obtain that
\begin{equation*}
  \Var_{\bbP}[F_T]\asymp  \rho T\int^{2 \beta_0 K(T)^{-1}}_1 \frac{\dd s}{s \varphi(s)^3}.
\end{equation*}
Replacing $2 \beta_0 K(T)^{-1}$ by $K(T)^{-1}$ does not alter the order of magnitude of the r.h.s.\ and concludes the proof of \eqref{frwrt}.

Now let us compute the second moment of $F_1$ under \(\bbP_{\tau}\).
The \(H_j\) in \eqref{def:F1F2} are independent under \(\bbP_{\tau}\) so that \(\Var_{\bbP_{\tau}}(F_1) =\sum_{j=1}^m \Var_{\bbP_{\tau}}(H_j)\). 
Bounding the variance by the second moment and using  stochastic comparison of \Cref{lem:domination} (note that \(H_j\) hence \(H_j\) is an non-decreasing function of \(\bar\cU\) since \(\xi\) is non-negative), we get
\[
 \Var_{\bbP_{\tau}}\left( H_j\right) \le \bbE_{\tau}\left[ (H_j)^2 \right] \leq \bbE[(H_j)^2] =\Var_{\bbP}(H_j)+ \bbE[H_j]^2 \,.
\]
Now, by definition~\eqref{def:F1F2} of \(H_j\), using regular variation as above, we obtain that
\begin{equation}
  \label{eq:EHj}
\bbE[H_j]   = \rho \, \Delta \tau_j \sum_{k= \beta_0 K(\Delta \tau_j)^{-1}}^{2 \beta_0 K(\Delta \tau_j)^{-1}} \xi(k) \bar \mu(k)  \asymp \rho \Delta \tau_j  \frac{1}{K(\Delta \tau_j)}  \xi\Big( \frac{1}{K(\Delta \tau_j)} \Big)  J\Big( \frac{1}{K(\Delta \tau_j)} \Big)  \asymp   \rho\, \xi\Big( \frac{1}{K(\Delta \tau_j)} \Big) \,,
\end{equation}
where for the last identity we have used~\eqref{implicit} to get that \(t K(t)^{-1} J(K(t)^{-1}) \asymp 1\).
Repeating the same computation we obtain that 
\[
\bbE[(H_j)^2]  \asymp \rho\, \xi\Big( \frac{1}{K(\Delta \tau_j)} \Big)^2 \,.
\]
As a consequence, if \(\rho\) is sufficiently small we have \(\bbE[H_j]^2 \leq \frac{1}{2} \bbE[(H_j)^2] \), from which we deduce that \(\bbE[H_j]^2 \leq \Var_{\bbP}(H_j)\) and thus $\Var_{\bbP_{\tau}}( H_j)\le 2\Var_{\bbP}(H_j)$.
Summing over \(j\), we finally obtain that \(\Var_{\bbP_{\tau}}(F_1) \leq 2 \Var_{\bbP}(F_1) \leq 2 \Var_{\bbP}(F_T)\), which concludes the proof.
\qed

\subsection{Proof of \texorpdfstring{\Cref{lem:shiftE}}{the lemma}}

First of all, using Mecke's formula \cite[Theorem 4.1]{LP18}, we get that  
\[
\bbE\bigg[ \sum_{j: \vartheta_j \in (\tau_{j-1},\tau_j]} \ind_{\{U_j=k\}} \bigg]  =\rho \bar \mu(k) \Delta \tau_j \,.
\]
On the other hand, using \Cref{lem:sizebiased}, we also have
\[
\bbE_{\tau}\bigg[ \sum_{i: \vartheta \in (\tau_{j-1},\tau_j]} \ind_{\{U_i=k\}} \bigg]  = \rho  \bar \mu(k) \Delta \tau_j \; \frac{1}{2k+1} \sum_{x=-k}^k \frac{\P(W_{\Delta \tau_j}=x)}{\P(W_{\Delta \tau_j}=0)} \leq  \frac12 \, \rho  \bar \mu(k) \Delta \tau_j \,,
\]
where the last inequality holds for \(k\geq \beta_0 K(\Delta \tau_j)^{-1}\) since then we have that \((2k+1) \P(W_{\Delta \tau_j}=0) = (2k+1) \beta_0^{-1} K(\Delta \tau_j)\geq 2\).
All together, we have that 
\begin{equation*}
 (\bbE- \bbE_{\tau})\big[ H_j \big] \ge  \frac12 \rho  \Delta \tau_j  \sum_{k=\beta_0 K(\Delta \tau_j)^{-1}}^{2\beta_0 K(\Delta \tau_j)^{-1}} \xi(k) \bar \mu(k) \geq c  \rho\, \xi\Big( \frac{1}{K(\Delta \tau_j)} \Big)\,,
\end{equation*}
the last inequality following as in~\eqref{eq:EHj} above.
Summing over~\(j\), this concludes the proof of Lemma~\ref{lem:shiftE}.
\qed

\noindent {\bf Acknowledgments:}
H.L.\ acknowledges the support of a productivity grand from CNQq and of a CNE grant from FAPERj.
Q.B. acknowledges the support of Institut Universitaire de France and ANR Local (ANR-22-CE40-0012-02).

\appendix

\section{Some results on the homogeneous pinning model}
\label{app:hompinning}

Recall that we have defined
\(K_\gb(t) := \frac{\gb}{\gb_0} K(t) e^{-\tf(\gb) t}\), so that $\int_0^{\infty} K_{\gb} (t) \dd t =1$ if $\gb\geq \gb_0$, and that \(\bQ_{\gb}\) denotes the law of a renewal process with inter-arrival density \(K_{\gb}\). 
We also denote \(\bar K_{\beta}(t) = \int_{t}^{\infty} K_{\beta}(s) \dd s\).

\subsection{Removing the endpoint conditioning}

Recall that we have defined the conditioned law $\bQ_{\beta,t}(  \cdot )=\lim_{\gep\to 0 }\bQ_{\beta}(  \cdot \mid \tau\cap[t,t+\gep]\ne \emptyset )$. 
We then have the following lemma, analogous to \cite[Lem.~A.2]{GLT11}. Note that we need to deal with \(\bQ_{\beta}\) for \(\beta \geq \beta_0\) instead of only \(\bQ=\bQ_{\beta_0}\), so we give a short proof of it for completeness.

\begin{lemma}
\label{lem:removecond}
Assume that \(K(t) = L(t) t^{-(1+\alpha)}\) for some slowly varying function \(L(\cdot)\) and \(\alpha>0\).
Then, there exists a constant $C$ such that, for any \(\beta\in [\beta_0,2\beta_0]\), for any $t>1$ and any non-negative measurable function~$f$
\[
\bQ_{\beta,2t}\big[ f\big( \tau\cap [0,t] \big) \big] \leq C \bQ_{\beta}\big[ f\big( \tau\cap [0,t] \big)  \big] \,.
\]
(Recall that \(\bQ=\bQ_{\beta_0}\).)
\end{lemma}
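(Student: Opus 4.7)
The plan is to express $\bQ_{\beta,2t}[f(\tau\cap[0,t])]$ as a weighted expectation under $\bQ_\beta$ (without endpoint conditioning), with a weight that is uniformly bounded. First, by disintegrating according to the last renewal point $\tau_{\mathcal N_t} \in [0,t]$ and applying the strong Markov property at that point, one obtains
\begin{equation*}
\bQ_{\beta,2t}\big[f(\tau\cap[0,t])\big] = \frac{1}{u_\beta(2t)}\, \bQ_\beta\big[f(\tau\cap[0,t]) \cdot \psi_{\beta,t}(\tau_{\mathcal N_t})\big] \,,
\end{equation*}
where $\psi_{\beta,t}(s)$ denotes the conditional density at $2t$ of the remaining renewal process, given that its first jump after $s$ exceeds $t-s$. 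A direct computation (splitting according to whether the first renewal after $s$ is exactly at $2t$ or not, and then applying the renewal equation to the fresh process that follows) yields the explicit identity
\begin{equation*}
\bar K_\beta(t-s)\, \psi_{\beta,t}(s) = K_\beta(2t-s) + \int_{t-s}^{2t-s} K_\beta(x)\, u_\beta(2t-s-x)\, \dd x \,.
\end{equation*}

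With this identity in hand, the lemma reduces to proving the uniform bound $\psi_{\beta,t}(s) \leq C\, u_\beta(2t)$ for all $s \in [0,t]$, all $\beta \in [\beta_0,2\beta_0]$ and all $t > 1$. To control the right-hand side, I would use regular variation of $K$ (and hence of $K_\beta$, $\bar K_\beta$ and $u_\beta$): for $a$ large, $\bar K_\beta(a) \asymp a K_\beta(a)$, while on the dominant part of the integration range $K_\beta(x)\, u_\beta(2t-s-x)$ is of order $K_\beta(t-s)\, u_\beta(t)$. The right-hand side is thus of order $(t-s) K_\beta(t-s)\, u_\beta(t) \asymp \bar K_\beta(t-s)\, u_\beta(t)$; dividing by $\bar K_\beta(t-s)$ leaves a quantity of order $u_\beta(t)$, which is comparable to $u_\beta(2t)$ thanks to the regular variation of $u_\beta$ at infinity.

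The main obstacle is to carry out this estimate uniformly in $\beta \in [\beta_0,2\beta_0]$, since the asymptotic behavior of $u_\beta$ is qualitatively different on and off criticality. For $\beta > \beta_0$, $K_\beta$ has finite mean $m_\beta$ (continuous in $\beta$ and bounded away from $0$ on $[\beta_0,2\beta_0]$), so $u_\beta(r) \to 1/m_\beta$ by the classical renewal theorem; for $\beta = \beta_0$, one invokes \eqref{renewalr} when $\alpha > 1$ or the local limit theorem \eqref{renewal} when $\alpha \in (0,1)$. Producing a single constant $C$ that works across both regimes can be done via joint continuity of the Laplace transforms $\hat u_\beta(\lambda) = (1-\hat K_\beta(\lambda))^{-1}$ in $(\beta,\lambda)$ combined with a Tauberian argument, while small values of $t$ (say $t \in (1, T_0]$ for some fixed $T_0$) are handled by direct continuity and compactness on the parameter $\beta$.
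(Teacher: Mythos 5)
Your decomposition is exactly the one the paper uses: disintegrate at $r=\sup\tau\cap[0,t]$, apply the Markov property, and reduce the lemma to a uniform pointwise bound on the ratio of the two densities (your $\psi_{\beta,t}(s)\le C u_\beta(2t)$ is the same inequality as the paper's \eqref{eq:boundKcond}). So the route is not different; what is missing is the technical core.

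Two of your estimates are not correct as stated and cannot be fixed by merely polishing the exposition. First, $K_\beta(t)=\tfrac{\beta}{\beta_0}K(t)e^{-\tf(\beta)t}$ is \emph{not} regularly varying for $\beta>\beta_0$ because of the exponential factor, so ``regular variation of $K_\beta$, $\bar K_\beta$, $u_\beta$'' is false off criticality. In particular the relation $\bar K_\beta(a)\asymp a K_\beta(a)$ only holds in the regime $a\tf(\beta)\lesssim 1$; when $a\tf(\beta)\gg 1$ one has instead $\bar K_\beta(a)\asymp K_\beta(a)/\tf(\beta)\ll a K_\beta(a)$. Since the lemma is stated for \emph{all} $t>1$ and $\beta$ up to $2\beta_0$, this regime genuinely occurs (take $\beta$ bounded away from $\beta_0$ and $t$ large), and your bound on the denominator is then over-optimistic. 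Second, and more importantly, the uniformity in $\beta\in[\beta_0,2\beta_0]$ is the whole content of the lemma, since the asymptotics of $u_\beta$ changes qualitatively between $\beta=\beta_0$ (regularly varying via \eqref{renewal}) and $\beta>\beta_0$ (asymptotically constant by the renewal theorem). The sentence about ``joint continuity of Laplace transforms combined with a Tauberian argument'' does not produce the needed uniform pointwise comparison of $u_\beta$ at finite arguments; Tauberian theorems give asymptotics one $\beta$ at a time, not a uniform constant.

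The paper's proof instead introduces the scale $t_\beta:=\tfrac12(t\wedge\tf(\beta)^{-1})$, splits the numerator integral at $2t-t_\beta$, and invokes \cite[Lemma 3.1]{BLirrel} (a uniform-in-$\beta$ comparison $u_\beta(r)\asymp u(r\wedge\tf(\beta)^{-1})$) three times, together with $\int_0^{t_\beta}u_\beta\le c\,t_\beta u_\beta(t_\beta)$ and $u_\beta(t)\ge c\,\tf(\beta)$, to close the bound. That split and the uniform comparison lemma are precisely what make the argument go through across both regimes; your outline has no substitute for them.
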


\begin{proof}
  Let \(u_{\beta}(t)\) be the density of the renewal measure, defined by on $(0,\infty)$ by \(\bQ_{\beta}[|\tau \cap A|] = \int_A u_{\beta}(t) \dd t\). With some abuse of notation we let $u_{\beta}(\dd t):= u_{\beta}(t)\dd t+ \delta_0(\dd t)$ be the associated measure (including a Dirac mass at $0$).
  Decomposing over the position \(r = \sup \tau\cap [0,t] \) and \(v =2t- \inf \tau \cap (t,2t]\), we have 
  \begin{equation*}\begin{split}
    \bQ_{\beta,2t}\big[ f\big( \tau\cap [0,t] \big) \big] 
    &= \int_0^t \frac{1}{u_{\beta}(2t)}\bQ_{\beta,r}\big[ f\big( \tau\cap [0,r] \big) \big]  \bigg( \int_{0}^{t} K_{\beta}(2t-r-v) u_{\beta}(\dd v) \bigg) u_{\beta}(\dd r) \,,\\
      \bQ_{\beta}\big[ f\big( \tau\cap [0,t] \big) \big]& = \int_0^t  \bQ_{\beta,r}\big[ f\big( \tau\cap [0,r] \big) \big]   \bar K_{\beta}(t-r) u_{\beta}(\dd r) \,.
  \end{split}\end{equation*}
We get the desired conclusion if we can show that the ratio of the integrand over $r$ is bounded that is
  \begin{equation}
    \label{eq:boundKcond}
    \sup_{r\in [0,t]} \frac{\int_{t}^{2t} K_{\beta}(s-r) u_{\beta}(2t-s) \dd s + K_{\beta}(2t-r)}{u_{\beta}(2t) \int_{t}^{\infty}K_{\beta}(s-r) \dd s }  \leq C <+\infty ,
  \end{equation}
  (in the above $s=2t-v$).
  For~\eqref{eq:boundKcond}, let us set \(t_{\beta}:=\frac12 ( t\wedge \frac{1}{\tf(\gb)})\) and split the integral on the numerator at \(2t-t_{\beta}\).
  First, note that thanks to \cite[Lemma 3.1]{BLirrel} we have that \(u_{\beta}(2t-s) \leq C u_{\beta}(2t)\) whenever \(2t-s \geq t_{\beta}\), (we use the fact that \(u(\cdot)\) is regularly varying to treat the case $\beta=\beta_0$).
  The first part of the integral is thus 
  \[
  \int_{t}^{2t-t_{\beta}} K_{\beta}(s-r)  \frac{u_{\beta}(2t-s)}{u_{\beta}(2t)}\dd s \leq C \int_{t}^{2t-t_{\beta}} K_{\beta}(s-r)  \dd s  \,.
  \]
  For the second part of the integral, we have \(K_{\beta}(s-r) \leq c  e^{-(2t-r) \tf(\gb)} K(t) \) uniformly for  for \(s \in [2t-t_\beta,t]\), using that \(2t-r \ge \frac12 t\) and \(t_{\gb} \tf(\gb) \leq \frac12\).
  Now, using again~\cite[Lemma 3.1]{BLirrel} (in the first and last inequality) and regular variation (in the middle one) we have  $$\int_0^{t_{\beta}} u_{\beta}(x) \dd x \leq c \int_0^{t_{\beta}} u(x) \dd x \le c' t_{\gb} u(t_{\beta})\le c'' t_{\beta} u_{\beta}(t_\beta)$$ 
  Altogether, we have that 
  \[
  \int_{2t-t_{\beta}}^{2t} K_{\beta}(s-r)  \frac{u_{\beta}(2t-s)}{u_{\beta}(2t)}\dd s \leq c\,  t_{\beta} e^{-(2t-r) \tf(\gb)} K(t) \leq C \int_{2t-t_{\beta}}^{2t} K_{\beta}(s-r) \dd s \,.
  \]
  It only remains the last term. Note that we have that 
  \[
  \int_{2t}^{\infty} K_{\beta}(2s-r) \dd s \geq e^{-(2t-r)\tf(\gb)} \int_{2t}^{2t +1/\tf(\gb)} K(2s-r) \dd s \geq c \tf(\gb)^{-1} K_{\gb}(2t-r) \,.
  \] 
  Hence, it only remains to see that we have \(u_{\gb}(t) \geq c u(t\wedge \tf(\gb)^{-1}) \geq c' \tf(\gb)\) by combining ~\cite[Lemma~3.1]{BLirrel} and the fact that \(u(s) \geq c (1+s)^{-1}\) (recall~\eqref{renewalr}-\eqref{renewal}).
  This concludes the proof of \eqref{eq:boundKcond}.
\end{proof}

\subsection{About the mean, truncated mean and Laplace transform}

We focus on this section on the case \(\alpha \in (0,1)\) for simplicity (we use the results in the case \(\gamma \in (\frac23,1)\), \textit{i.e.}\ \(\alpha \in (0,\frac12)\)).
The case \(\alpha>1\) is only simpler.
Let us set 
\[
 m_{\gb} := \bQ_{\beta}[\tau_1] = \int_0^{\infty} s K_{\gb}(s) \dd s \,.
\]
and note that since \(tK(t)\) is regularly varying with exponent \(-\alpha\), we get that 
\(
m(t) \asymp t^2 K(t)\,,
\)

\begin{lemma}
  \label{lem:mbeta}
Assume that \(\alpha\in (0,1)\).
We have the following comparison: there are constants \(c,c'\) such that, for $\beta\in (\beta_0,2\beta_0)$
  \[
    c' \tf(\beta)^{-2}K\big(1/\tf(\beta)\big)  \leq \bQ_{\beta}[\tau_1 \ind_{\{\tau_1 \leq 1/\tf(\beta)\}}] \leq m_{\gb} = \bQ_{\beta}[\tau_1] \leq c \tf(\beta)^{-2}K\big(1/\tf(\beta) \big)\,.
  \]
In particular, \(\bQ_{\beta}[\tau_1 \ind_{\{\tau_1 \leq 1/\tf(\beta)\}}] \geq c' c^{-1} m_{\beta}\).
\end{lemma}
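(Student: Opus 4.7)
\textbf{Proof plan for \Cref{lem:mbeta}.} The middle inequality \(\bQ_\beta[\tau_1\ind_{\{\tau_1\le 1/\tf(\beta)\}}] \le m_\beta\) is immediate, and the last assertion \(\bQ_\beta[\tau_1\ind_{\{\tau_1\le 1/\tf(\beta)\}}] \ge (c'/c) m_\beta\) is an immediate corollary of the two displayed order-bounds. The task is therefore to establish the leftmost lower bound and the rightmost upper bound with matching order \(\tf(\beta)^{-2}K(1/\tf(\beta))\). Both will follow from Karamata-type asymptotics applied to the function \(sK(s)=L(s)s^{-\alpha}\), which is regularly varying with index \(-\alpha \in (-1,0)\) by the assumption \(\alpha\in(0,1)\). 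Since \(\beta/\beta_0 \in [1,2]\) throughout, the prefactor in \(K_\beta(s) = (\beta/\beta_0) K(s) e^{-\tf(\beta) s}\) plays no role at the order-of-magnitude level.

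For the lower bound, I would simply write
\[
\bQ_\beta\big[\tau_1 \ind_{\{\tau_1 \le 1/\tf(\beta)\}} \big] \ge e^{-1} \int_0^{1/\tf(\beta)} s K(s) \dd s,
\]
using \(e^{-\tf(\beta) s}\ge e^{-1}\) on the integration range. By Karamata's theorem for integrals of regularly varying functions of index in \((-1,0)\) (see e.g.\ \cite[Prop.~1.5.10]{BGT89}), \(\int_0^T s K(s)\,\dd s \sim (1-\alpha)^{-1} T^2 K(T)\) as \(T\to\infty\), and evaluating at \(T=1/\tf(\beta)\) yields the desired lower bound of order \(\tf(\beta)^{-2} K(1/\tf(\beta))\) in the regime \(\tf(\beta)\downarrow 0\).

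For the upper bound, the cleanest route is Karamata's Tauberian theorem: since \(s K(s)\) is regularly varying of index \(-\alpha\in(-1,0)\),
\[
m_\beta \asymp \int_0^\infty s K(s)\, e^{-\tf(\beta) s} \dd s \,\overset{\tf(\beta)\downarrow 0}{\sim}\, \Gamma(1-\alpha)\, \tf(\beta)^{\alpha-1} L(1/\tf(\beta)) = \Gamma(1-\alpha)\, \tf(\beta)^{-2} K(1/\tf(\beta)),
\]
which delivers the required upper bound. Alternatively, one can split the integral at \(T_\star = 1/\tf(\beta)\): the contribution of \([0,T_\star]\) is treated exactly as in the lower bound (with \(e^{-\tf(\beta) s} \le 1\)), while the tail contribution, after the change of variables \(u = \tf(\beta) s\), becomes \(\tf(\beta)^{\alpha-1} \int_1^\infty L(u/\tf(\beta))\, u^{-\alpha}\, e^{-u} \dd u\); Potter's bounds \cite[Thm.~1.5.6]{BGT89} applied to the ratio \(L(u/\tf(\beta))/L(1/\tf(\beta))\) then produce the same order.

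The only subtlety is uniformity in \(\beta \in (\beta_0, 2\beta_0)\): the asymptotics above cover the regime \(\tf(\beta)\downarrow 0\), and for \(\beta\) bounded away from \(\beta_0\) within this interval, \(\tf(\beta)\) lies in a compact subset of \((0,\infty)\), so both sides are positive and continuous and the bounds extend automatically. I do not anticipate any serious obstacle; the only point requiring care, if one avoids the Tauberian shortcut, is to invoke Potter's bounds uniformly on the relevant compact range of \(\tf(\beta)\).
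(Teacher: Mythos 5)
Your argument is correct, and the lower bound is handled exactly as the paper does: bound \(e^{-\tf(\beta)s}\ge e^{-1}\) on \([0,1/\tf(\beta)]\) and apply Karamata's integral theorem to \(sK(s)\), which is regularly varying of index \(-\alpha\in(-1,0)\). For the upper bound, however, your primary route differs from the paper's. You invoke the Abelian direction of Karamata's Tauberian theorem to get the asymptotics of the full Laplace transform \(\int_0^\infty sK(s)e^{-\tf(\beta)s}\dd s \sim \Gamma(1-\alpha)\tf(\beta)^{-2}K(1/\tf(\beta))\) in one stroke, whereas the paper splits the integral at \(1/\tf(\beta)\): it bounds the near part as in the lower bound, and bounds the tail by
\(\big(\sup_{s\ge 1/\tf(\beta)} sK(s)\big)\tf(\beta)^{-1}\), then uses \(\sup_{s\ge t} sK(s)\sim tK(t)\) (another standard consequence of regular variation with negative index). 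Your Tauberian shortcut is cleaner and gives the sharp constant \(\Gamma(1-\alpha)\) for free, while the paper's split-and-sup argument is more elementary and more readily seen to be uniform over the compact range of \(\beta\); your alternative route (change of variables plus Potter's bounds on the tail) is morally the same as the paper's split. One minor terminological point: the direction you use — from regular variation of \(sK(s)\) to the asymptotics of its Laplace transform — is the Abelian direction; the Tauberian direction is the converse, though ``Karamata's Tauberian theorem'' is often used loosely for the whole package. Your remark about uniformity in \(\beta\in(\beta_0,2\beta_0)\) is correct and is exactly the kind of care the paper implicitly relies on.
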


\begin{proof}
First of all, note that
\[
  m_{\gb} =\frac{\gb}{\gb_0}   \int_0^{1/\tf(\gb)} e^{-s \tf(\beta)} s K(s) \dd s +\frac{\gb}{\gb_0} \int^{\infty}_{1/\tf(\gb)} e^{-s \tf(\beta)}t K(t) \dd s  \,.
\]
The first term is \(\bQ_{\beta}[\tau_1 \ind_{\{\tau_1 \leq 1/\tf(\beta)\}}]\) and is comparable with \(\int_0^{1/\tf(\gb)} sK(s) \dd s\).
Now, by regular variation of \(sK(s)\) we get that \(\int_0^{t} sK(s) \dd s \sim t^2 K(t)\) as \(t\to\infty\), so the first term is comparable to \(\tf(\beta)^{-2}K (1/\tf(\beta))\).
For the second one, we have 
\begin{equation*}
  \int^{\infty}_{1/\tf(\gb)} s K(s)e^{- s \tf(\beta)} \dd t= \Big(\sup_{s \geq 1/\tf(\beta)} s K(s)\Big)   \tf(\beta)^{-1}
  \le c   \tf(\beta)^{-2} K\big(1/\tf(\gb)\big) \,,
\end{equation*}
since regular variation implies that $\sup_{s\ge t} s K(s)\stackrel{t\to \infty}{\sim} tK(t)$. This completes the proof.
\end{proof}

\begin{lemma}
  \label{lem:Laplace}
  There is a constant $C>0$ such that, for any $\lambda \in (0, \frac12 \tf(\gb))$ we have
  \begin{equation}
  \bQ_{\gb}\big[ e^{\lambda \tau_1}\big] \leq 1+\lambda m_{\beta}+ C\lambda^2 m_{\beta} \tf(\beta)^{-1} \,.
  \end{equation}
\end{lemma}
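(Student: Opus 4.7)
The plan is to start from the exact identity
\[
\bQ_{\gb}\big[e^{\lambda \tau_1}\big] = \int_0^{\infty} e^{\lambda s} K_{\gb}(s) \dd s \,,
\]
and use the second-order Taylor remainder $e^x = 1 + x + x^2 \int_0^1 (1-u) e^{ux} \dd u$ applied with $x = \lambda s$. Integrating against $K_\gb(s) \dd s$ immediately gives
\[
\bQ_{\gb}\big[e^{\lambda \tau_1}\big] = 1 + \lambda m_\gb + \lambda^2 \int_0^{\infty} s^2 \Big(\int_0^1 (1-u) e^{u\lambda s}\dd u\Big) K_\gb(s) \dd s \,,
\]
so the whole task reduces to bounding this last integral by $C m_\gb \tf(\gb)^{-1}$.

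Next, I would exploit the assumption $\lambda \leq \frac12 \tf(\gb)$ together with the definition $K_\gb(s) = \frac{\gb}{\gb_0} K(s) e^{-\tf(\gb) s}$. For any $u \in [0,1]$ we have $e^{u\lambda s - \tf(\gb) s} \leq e^{-\tf(\gb) s/2}$, so that the remainder is bounded by
\[
\frac{1}{2}\frac{\gb}{\gb_0}\int_0^{\infty} s^2 K(s)\, e^{-\tf(\gb)s/2} \dd s \,.
\]
With $\gb \in [\gb_0,2\gb_0]$, the prefactor is bounded, so the core estimate to prove is
\[
\int_0^{\infty} s^2 K(s)\, e^{-\delta s} \dd s \;\leq\; C \,\delta^{-3} K(1/\delta) \qquad \text{for } \delta = \tf(\gb)/2 \text{ small.}
\]

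To obtain this, I would split the integral at $1/\delta$. On $[0, 1/\delta]$ we drop the exponential and invoke Karamata's theorem: since $s^2 K(s) = s^{1-\alpha} L(s)$ is regularly varying with index $1-\alpha \in (0,1)$, one has $\int_0^{1/\delta} s^2 K(s) \dd s \sim \frac{1}{2-\alpha} \delta^{-3} K(1/\delta)$. On $[1/\delta, \infty)$, writing $s^2 K(s) \leq C \delta^{-3} K(1/\delta) (s\delta)^{1-\alpha+\epsilon}$ via Potter's bounds for some small $\epsilon$ such that $1-\alpha+\epsilon < 1$, the exponential factor guarantees convergence and yields the same order $\delta^{-3} K(1/\delta)$ (up to a constant depending only on $\alpha$ and the slowly varying function).

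Finally, I would combine this with \Cref{lem:mbeta}, which gives $m_\gb \asymp \tf(\gb)^{-2} K(1/\tf(\gb))$. Regular variation of $K$ and the fact that $\tf(\gb)$ and $\delta = \tf(\gb)/2$ differ by a constant factor yield $\delta^{-3} K(1/\delta) \asymp \tf(\gb)^{-1} m_\gb$, and substituting back into the bound on the remainder completes the proof. The only technical point is the Karamata/Potter estimate in step three, which is routine; the rest is bookkeeping.
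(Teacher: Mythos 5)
Your proposal is correct and follows essentially the same route as the paper: both expand $\bQ_\gb[e^{\lambda\tau_1}]$ to second order via Taylor's formula (the paper differentiates the Laplace transform twice and then invokes Taylor, you apply the integral-form remainder directly to $e^{\lambda s}$ under the integral), absorb $e^{u\lambda s}$ into the exponential damping using $\lambda\le\frac12\tf(\gb)$, and reduce to bounding $\int_0^\infty s^2 K(s)e^{-\tf(\gb)s/2}\dd s\lesssim \tf(\gb)^{-3}K(1/\tf(\gb))\asymp m_\gb\tf(\gb)^{-1}$ via the split-at-$1/\tf(\gb)$ argument and Lemma~\ref{lem:mbeta}. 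You spell out the Karamata/Potter details for the tail that the paper leaves implicit (``as in the proof of Lemma~\ref{lem:mbeta}''), which is a sensible choice since, unlike $sK(s)$, the function $s^2K(s)$ is unbounded on $[1/\tf(\gb),\infty)$ and the exponential decay genuinely has to be used there.
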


\begin{proof}
  For any $\lambda\in (0, \frac12 \tf(\gb))$, we have
  \begin{align*}
    \partial^2_{\gl} \bQ_{\gb}\big[ e^{\lambda \tau_1}\big] = \bQ_{\gb}\big[ \tau^2_1 e^{\lambda \tau_1}\big] 
    =  \frac{\beta}{\beta_0} \int_0^{+\infty}  e^{-s (\tf(\beta)-\lambda) } s^2K(s) \dd s
    \le \frac{\beta}{\beta_0} \int_0^{+\infty}  e^{- \frac12s\tf(\beta) } s^2 K(2) \dd s  \,.
  \end{align*}
  We proceed then as in the proof of \Cref{lem:mbeta},splitting the integral according to $s\le \tf(\beta)^{-1}$  and $s> \tf(\beta)^{-1}$, it yields that
  \[
  \int_0^{+\infty}  e^{- \frac12s\tf(\beta) } s^2 K(2) \dd s \leq C \, \tf(\beta)^{-3} K(1/\tf(\beta))\le C' m_{\beta}/\tf(\beta)\,,
  \]
  using \Cref{lem:mbeta} for the last inequality.
  We then deduce the result from Taylor's formula.
\end{proof}

\bibliographystyle{abbrv}
\bibliography{biblio.bib}

\begin{thebibliography}{10}

\bibitem{Ale08}
K.~S. Alexander.
\newblock The effect of disorder on polymer depinning transitions.
\newblock {\em Commun. Math. Phys.}, 279(1):117--146, 2008.

\bibitem{BL11}
Q.~Berger and H.~Lacoin.
\newblock The effect of disorder on the free-energy for the random walk pinning model: Smoothing of the phase transition and low temperature asymptotics.
\newblock {\em Journal of Statistical Physics}, 142(2):322--341, 2011.

\bibitem{BL18}
Q.~Berger and H.~Lacoin.
\newblock Pinning on a defect line: characterization of marginal relevance and sharp asymptotics for the critical point shift.
\newblock {\em Journal of the Institute of Mathematics of Jussieu}, 17(2):305--346, 2018.

\bibitem{BLirrel}
Q.~Berger and H.~Lacoin.
\newblock The random walk pinning model {I}: lower bounds on the free energy and disorder irrelevance.
\newblock {\em preprint}, 2025.

\bibitem{BT10}
Q.~Berger and F.~L. Toninelli.
\newblock On the critical point of the random walk pinning model in dimension {$d=3$}.
\newblock {\em Electron. J. Probab.}, 15(21):no. 21, 654--683, 2010.

\bibitem{BGT89}
N.~H. Bingham, C.~M. Goldie, and J.~L. Teugels.
\newblock {\em Regular variation}, volume~27.
\newblock Cambridge university press, 1989.

\bibitem{BS10}
M.~Birkner and R.~Sun.
\newblock Annealed vs quenched critical points for a random walk pinning model.
\newblock {\em Annales de l'Institut Henri Poincar{\'e}, Probabilit{\'e}s et Statistiques}, 46(2):414--441, 2010.

\bibitem{BS11}
M.~Birkner and R.~Sun.
\newblock Disorder relevance for the random walk pinning model in dimension 3.
\newblock {\em Annales de l'Institut Henri Poincar{\'e}, Probabilit{\'e}s et Statistiques}, 47(1):259--293, 2011.

\bibitem{DGLT09}
B.~Derrida, G.~Giacomin, H.~Lacoin, and F.~L. Toninelli.
\newblock Fractional moment bounds and disorder relevance for pinning models.
\newblock {\em Comm. Math. Phys.}, 287(3):867--887, 2009.

\bibitem{D97}
R.~A. Doney.
\newblock One-sided local large deviation and renewal theorems in the case of infinite mean.
\newblock {\em Probability Theory and Related Fields}, 107(4):451--465, 1997.

\bibitem{FellerII}
W.~Feller.
\newblock {\em An Introduction to Probability Theory and its Applications, Vol.~{II}}.
\newblock John Wiley \& Sons. Inc., New York-London-Sydney, 1971.

\bibitem{Gia07}
G.~Giacomin.
\newblock {\em Random Polymer Models}.
\newblock Imperial College Press, World Scientific, 2007.

\bibitem{Gia10}
G.~Giacomin.
\newblock {\em Disorder and Critical Phenomena Through Basic Probability Models}, volume 2025 of {\em {\'E}cole d'\'Et{\'e} de probabilit{\'e}s de {S}aint-{F}lour}.
\newblock Springer-Verlag Berlin Heidelberg, 2010.

\bibitem{GLT10}
G.~Giacomin, H.~Lacoin, and F.~L. Toninelli.
\newblock Marginal relevance of disorder for pinning models.
\newblock {\em Commun. Pure Appl. Math.}, 63:233--265, 2010.

\bibitem{GLT11}
G.~Giacomin, H.~Lacoin, and F.~L. Toninelli.
\newblock Disorder relevance at marginality and critical point shift.
\newblock {\em Annales de l'Institut Henri Poincar{\'e}, Probabilit{\'e}s et Statistiques}, 47(1):148--175, 2011.

\bibitem{GT09}
G.~Giacomin and F.~L. Toninelli.
\newblock On the irrelevant disorder regime of pinning models.
\newblock {\em The Annals of Probability}, 37(5):1841--1875, 2009.

\bibitem{GK68}
B.~V. Gnedenko and A.~N. Kolmogorov.
\newblock {\em Limit distributions for sums of independent random variables}, volume 2420.
\newblock Addison-wesley, 1968.

\bibitem{Lac10ecp}
H.~Lacoin.
\newblock The martingale approach to disorder irrelevance for pinning models.
\newblock {\em Electron. Commun. Probab.}, 15:418--427, 2010.

\bibitem{LP18}
G.~Last and M.~Penrose.
\newblock {\em Lectures on the {P}oisson process}, volume~7 of {\em Institute of Mathematical Statistics Textbooks}.
\newblock Cambridge University Press, Cambridge, 2018.

\bibitem{Ton08a}
F.~L. Toninelli.
\newblock A replica-coupling approach to disordered pinning models.
\newblock {\em Communications in Mathematical Physics}, 280(2):389--401, 2008.

\bibitem{Top10}
V.~Topchii.
\newblock Renewal measure density for distributions with regularly varying tails of order a $\alpha\in (0, 1/2]$.
\newblock In {\em Workshop on Branching Processes and Their Applications}, pages 109--118. Springer, 2010.

\end{thebibliography}

\end{document}